\documentclass[11pt,reqno]{amsart}
\usepackage{a4wide}
\usepackage{hyperref}
\usepackage{color}
\usepackage{cite}
\usepackage{amsfonts,amssymb}
\usepackage{pb-diagram}
\usepackage[all]{xy}
\usepackage{graphicx,color}
\usepackage{mathrsfs}
\usepackage{amstext}
\usepackage{sseq}

\definecolor{green}{rgb}{0,0.6,0.2}
\definecolor{blue}{rgb}{0,0,1}

\hypersetup{colorlinks,
linkcolor=blue,
filecolor=green,
citecolor=green}

%
%


\newtheorem{neu}{}[section]
\newtheorem{Cor}[neu]{Corollary}
\newtheorem*{Cor*}{Corollary}
\newtheorem{Thm}[neu]{Theorem}
\newtheorem*{Thm*}{Theorem}
\newtheorem{Prop}[neu]{Proposition}
\newtheorem*{Prop*}{Proposition}
\theoremstyle{definition}
\newtheorem{Lemma}[neu]{Lemma}
\newtheorem*{Rmk*}{Remark}
\newtheorem{Rmk}[neu]{Remark}

\newtheorem*{Ex*}{Example}

\newtheorem*{Qu*}{Question}

\newtheorem{Def}[neu]{Definition}

\theoremstyle{remark}

\theoremstyle{definition}

\newcommand{\x}{\times}

\newcommand{\wt}{\widetilde}
\newcommand{\wh}{\widehat}

\newcommand{\p}{\partial}
\newcommand{\om}{\omega}
\newcommand{\Om}{\Omega}
\newcommand{\into}{\hookrightarrow}
\newcommand{\pf}{\longrightarrow}


\newcommand{\N}{{\mathbb{N}}}
\newcommand{\Z}{{\mathbb{Z}}}
\newcommand{\R}{{\mathbb{R}}}
\newcommand{\C}{{\mathbb{C}}}


\renewcommand{\H}{\mathbb{H}}

\newcommand{\dom}{\mathrm{dom\,}}

\newcommand{\Id}{\mathrm{Id}}
\newcommand{\im}{\mathrm{im}}

\newcommand{\Sp}{\mathrm{Sp}}
\newcommand{\wind}{\mathrm{wind}}
\newcommand{\Fix}{\mathrm{Fix}\,}

\newcommand{\Sym}{{\rm Sym}}
\newcommand{\Hom}{{\rm Hom}}
\newcommand{\Ind}{{\rm Ind}}

\newcommand{\FS}{{\rm FS}}
\newcommand{\CZ}{{\rm CZ}}
\newcommand{\RS}{{\rm RS}}

\renewcommand{\SS}{\mathcal{S}}

\newcommand{\II}{\mathcal{I}}

\newcommand{\ZZ}{\mathcal{Z}}
\newcommand{\JJ}{\mathcal{J}}
\newcommand{\DD}{\mathcal{D}}

\newcommand{\UU}{\mathcal{U}}

\newcommand{\CC}{\mathcal{C}}

\newcommand{\MM}{\mathcal{M}}


\newcommand{\beq}{\begin{equation}}
\newcommand{\beqn}{\begin{equation}\nonumber}
\newcommand{\eeq}{\end{equation}}

\newcommand{\bea}{\begin{equation}\begin{aligned}}
\newcommand{\bean}{\begin{equation}\begin{aligned}\nonumber}
\newcommand{\eea}{\end{aligned}\end{equation}}

\newcommand{\one}
{{{\mathchoice \mathrm{ 1\mskip-4mu l} \mathrm{ 1\mskip-4mu l}
\mathrm{ 1\mskip-4.5mu l} \mathrm{ 1\mskip-5mu l}}}}

\numberwithin{equation}{section}
\numberwithin{figure}{section}


\begin{document}
\title[Real holomorphic curves and invariant global surfaces of section]{Real holomorphic curves and \\invariant global surfaces of section}
\author{Urs Frauenfelder  and Jungsoo Kang}
\address{
	Urs Frauenfelder\\
	Institut f\"ur Mathematik\\
	Universit\"at Augsburg, Universit\"atsstrasse 14, D-86159 Augsburg, Germany}
\email{urs.frauenfelder@math.uni-augsburg.de}
\address{
	Jungsoo Kang\\ 
	Mathematisches Institut\\
	Westf\"alische Wilhelms-Universit\"at M\"unster\\
	Einsteinstrasse 62, D-48149 M\"unster, Germany }
\email{jungsoo.kang@me.com}
\thanks{{\em 2010 Mathematics Subject Classification.} 53D35, 37J05.}
\maketitle
\begin{abstract}
In this paper we prove that a dynamically convex starshaped hypersurface in $\mathbb{C}^2$
which is invariant under complex conjugation admits a global surface of section which is invariant
under conjugation as well. We obtain this invariant global surface by embedding $\mathbb{C}^2$ into
$\mathbb{CP}^2$ and applying a stretching argument to real holomorphic curves in $\mathbb{CP}^2$.
The motivation for this result arises from recent progress in applying holomorphic curve techniques to
gain a deeper understanding on the dynamics of the restricted three body problem. 
\end{abstract}
\setcounter{tocdepth}{1}

\tableofcontents

\section{Introduction}

A global  surface of section is a gadget which allows one to reduce the Hamiltonian dynamics on a three dimensional energy hypersurface to the study of an area preserving disk map. An influential result
due to Hofer, Wysocki, and Zehnder \cite{HWZ98} tells us that on a dynamically convex, starshaped hypersurface in
$\mathbb{C}^2$ a global disk-like surface of section always exists. Here we recall that dynamically convex means
that the Conley-Zehnder indices of all periodic orbits are at least 3. For example hypersurfaces which bound
a strictly convex domain are dynamically convex, but different than convexity the notion of dynamical convexity
is a symplectic notion. The main result of this paper tells us that under the assumptions of the theorem of Hofer, Wysocki and Zehnder if our hypersurface is additionally invariant under complex conjugation we can choose
the global surface of section invariant as well. 

The study of this paper is motivated by recent progress of applying methods from holomorphic curve
theory to the study of the dynamics of the restricted three body problem, see \cite{AFFHvK12}. In particular,
in \cite{AFFHvK12} it was shown that below the first critical value for sufficient small mass of the smaller of the two primaries a global surface of section exists in the component around the small body. 

An interesting aspect of the Hamiltonian of the restricted three body problem is that it is invariant under an
antisymplectic involution, \cite{Poi,Bir15}. This leads to the dichotomy of periodic orbits into symmetric and nonsymmetric ones, i.e. periodic orbits whose trace is invariant under the involution respectively orbits whose
trace is not invariant and which have to occur necessarily in pairs. 

The Levi-Civita regularization embeds a double cover of the energy hypersurface of the restricted three body problem in complex two dimensional vector space. The results from \cite{AFPvK12} imply that
below the first critical value the components of the energy hypersurface around the two primaries bound
after Levi-Civita regularization a starshaped domain in complex two dimensional vector space. The fact that
the restricted three body problem is invariant under an antisymplectic involution translates into the fact
that the starshaped domains are invariant under complex conjugation. 

The main result of this paper asserts that if the boundary of an invariant starshaped domain is in addition dynamically convex, then it carries an invariant global disk-like surface of section. The results from \cite{AFFHvK12}
imply that this happens for example close to the smaller primary below the first critical value, if the mass
of the smaller primary is small enough. We refer to  \cite{Kan14} for a study of the nontrivial dynamical implications the existence of an invariant global disk-like surface of section has. For example the existence of an invariant global surface of section implies that there are either two or infinitely many symmetric periodic orbits and 
if there is at least one nonsymmetric periodic orbit, there have to exist necessarily infinitely many symmetric ones. 

In the paper \cite{HWZ03} Hofer, Wysocki, and Zehnder applied techniques from holomorphic curve theory combined with methods from Symplectic Field theory to produce a global disk-like surface of section via a stretching argument. The method of proof of our result is inspired by the stretching argument in \cite{HWZ03}. But different from
\cite{HWZ03} we apply technology from open string theory instead of closed string theory, namely real holomorphic curves, which are also used to define Welschinger invariants \cite{Wel05}. 

Since real holomorphic curves have to be defined via antiinvariant almost complex structures which in general
do not satisfy the generic properties of general almost complex structures, our approach requires some new ideas. Our main source of inspiration is the theory of fast finite energy planes due to Hryniewicz \cite{Hry12}. 

The problem due to lack of genericity of antiinvariant almost complex structures is that we are not able to
guarantee that the binding orbit of our invariant global surface of section has Conley-Zehnder index 3. 
On the other hand our binding orbit is a symmetric periodic orbit. Now a symmetric periodic orbit
has features from closed and open string theory. In fact it can be interpreted as a periodic orbit as well
as a Lagrangian intersection point. Due to this fact, a symmetric periodic orbit carries two indices. We show in this paper how its index as a Lagrangian intersection point can be controlled. We give an interpretation of this
second index in terms of winding numbers, inspired from the paper \cite{HWZ95b}. This enables us to conclude that the invariant finite energy plane we obtain by the stretching method is fast in the sense of Hryniewicz.
Because the real holomorphic curves we used in the stretching process were embedded, the invariant fast finite
energy plane we obtain is embedded as well. A deep theorem of Hryniewicz based on the fundamental results by Hofer, Wysocki, and Zehnder than allows us to conclude
that its projection to the contact manifold is an invariant global disk-like surface of section in case our contact manifold
is dynamically convex. 

The two indices of a symmetric periodic orbit are not completely unrelated but their difference can be computed
as a H\"ormander index. This allows us to conclude that the Conley-Zehnder index of the binding orbit
of our invariant global surface of section is at most 4. We could not decide so far, if it is possible to find
always an invariant global surface of section whose binding orbit has Conley-Zehnder index 3 and therefore
leave this question as food for thought for future research.

\subsubsection*{Acknowledgments} {We are grateful to Peter Albers, Urs Fuchs, Chris Wendl, and Kai Zehmisch for helpful conversations. We also thank the referee for his/her thorough work. JK is supported by DFG grant KA 4010/1-1 and was partially supported by SFB 878-Groups, Geometry, and Actions during this project.}


\section{Statement of the results}
On the complex two dimensional plane $\mathbb{C}^2$ we consider the standard symplectic form 
$$
\omega=dx_1 \wedge dy_1+dx_2 \wedge dy_2,
$$ 
where $(z_1,z_2)=(x_1+iy_1,x_2+iy_2)$ are coordinates on $\mathbb{C}^2$. The vector field 
$$
L=\frac{1}{2}\Big(x_1\frac{\partial}{\partial x_1}+y_1\frac{\partial}{\partial y_1}+x_2\frac{\partial}{\partial x_2}+y_2\frac{\partial}{\partial y_2}\Big)
$$
is a Liouville vector field with respect to $\omega$. A closed hypersurface $M \subset \mathbb{C}^2$ is called \emph{starshaped} if the Liouville vector field $L$ intersects it transversally. In particular, the one-form
$$
\lambda:=\iota_L \omega
$$
endows $M$ a contact form $\alpha:=\lambda|_M$, whose associated contact structure $\xi=\ker\alpha$ coincides with the tight contact structure on $M$. Using the flow of the Liouville vector field $L$ we identify $\mathbb{C}^2 \setminus \{0\}$ with the symplectization $(\R\x M,d(e^r\alpha))$ of the hypersurface $(M,\alpha)$  where $r$ in the coordinate on $\R$ . The {\em Reeb vector field} $X$ is defined on $M$ by the requirement
$$
\alpha(X)=1, \quad \iota_X d\alpha=0
$$
and extended $\mathbb{R}$-invariant to the symplectization $\R\x M$. An $\omega|_\xi$-compatible almost complex structure $J:\xi\to\xi$  is also extended to an almost complex structure $\wt J$ on $\R\x M$  by keeping the hyperdistribution $\xi$ invariant and satisfying $\wt JL=X$. Such a \emph{SFT-like} almost complex structure $\wt J$ is $\R$-invariant and  $\om$-compatible.  Abbreviate 
$$
\CC:=\{\phi \in C^\infty(\mathbb{R},[0,1])\,|\,\phi' \geq 0\}.
$$ 
For $\phi \in\CC$ define $\alpha_\phi \in \Omega^1(\mathbb{R}\times M)$ by $\lambda_\phi(r,x)=\phi(r)\alpha(x)$, $(r,x)\in\R\x M$ and set $\omega_\phi=d\alpha_\phi$. The \emph{energy} $E(\tilde{u}) \in \R$ of a map $\tilde{u}:\C\to\R\x M$ is defined as
$$
E(\tilde{u})=\sup_{\phi \in \CC} \int_\mathbb{C} \tilde{u}^* \omega_\phi.
$$
A map $\tilde{u}:(\C,i)\to (\R\x M,\wt J) $ is called a {\bf finite energy plane} if it is (pseudo-) holomorphic, i.e. $T\tilde u\circ i=\wt J\circ T\tilde u$ and satisfies
$$
0<E(\tilde{u})<\infty.
$$
The theory of finite energy planes was intensely studied by Hofer, Wysocki, and Zehnder in 
\cite{HWZ95b, HWZ96, HWZ96b, HWZ99}. 
In the following, let us assume that our hypersurface $M$ is \emph{nondegenerate}, in the sense that all periodic orbits of the Reeb vector field $X$ on $M$ have precisely one Floquet multiplier equal to 1.  Under this assumption, it follows from the results in \cite{HWZ96},  that if $\tilde{u}=(a,u):\C\to\R\x M$ is a finite energy plane, then $u(e^{2\pi(s+it)})$ converges in
$C^\infty(\mathbb{R}/\mathbb{Z},M)$ to $P(Tt)$ as $s \to \infty$, where $P \in C^\infty([0,T],M)$ is a periodic orbit of the Reeb vector field $X$, i.e.\,a solution of the problem
$$
\dot P(t)=X(P(t)), \qquad P(0)=P(T).
$$

\begin{Def}
A finite energy plane $\tilde{u}=(a,u):\C\to\R\x M$ is called {\bf fast}, if its asymptotic periodic orbit is nondegenerate and simple, and if $u$ is an immersion and transversal to the Reeb vector field $X$.
\end{Def}

The notion of fast finite energy planes is due to Hryniewicz \cite{Hry12}. This terminology is explained from the fact that a fast finite energy plane decays asymptotically as fast as possible. The reason why fast finite energy planes are interesting, is that they are essential ingredients to construct global disk-like surfaces of section.\\[-1.5ex]

In the following we consider the antisymplectic involution
$$
\tilde\rho \colon \mathbb{C}^2 \to \mathbb{C}^2, \quad (z_1,z_2) \mapsto (\bar{z}_1,\bar{z}_2).
$$
Note that the Liouville vector field $L$ is invariant under $\tilde\rho$, i.e.\,$T\tilde\rho(L)=L$, so that $\tilde\rho^*\lambda=-\lambda$. This restricts to an  antisymplectic involution on  the symplectization $\R\x M\cong \C^2\setminus\{0\}$ of $M$ that we still denote by $\tilde\rho:\R\x M\to \R\x M$. If $M$ is invariant under $\tilde\rho$, $\rho:=\tilde\rho|_M$ is an involution on $M$ with $\rho^*\alpha=-\alpha$ and  $\tilde\rho=\Id_\R\x\rho$ on $\R\x M$. Note that $T\rho$ maps $\xi$ to itself. We denote by $\JJ_\rho$ the space of $\om|_\xi$-compatible almost complex structures on $\xi$ antiinvariant under $\rho$, explicitly 
$$
 \rho^*J:=(T\rho|_\xi)^{-1}\circ J \circ T\rho|_\xi =-J.
$$ 
For any $J\in\JJ_\rho$, the associated SFT-like almost complex structure $\wt J$ is $\wt\rho$-antiinvariant.
We say that a finite energy plane $\tilde{u}:(\C,i)\to(\R\x M,\wt J)$ for $J\in\JJ_\rho$ is {\bf invariant}, if it satisfies
$$
\tilde\rho \circ \tilde{u}=\tilde{u}\circ I
$$
where 
$$
I:\C\to\C,\quad z\mapsto \bar z.
$$
\begin{Def}
Let $M$ be a 3-dimensional smooth manifold with a smooth vector field $X$. A global surface of section for $X$ on $M$ is an embedded Riemann surface $\Sigma\into M$ meeting the following requirements.\\[-2ex]
\begin{itemize}
\item[i)] The boundary of $\Sigma$ consists of periodic orbits, called the {\em spanning orbits}.
\item[ii)] The  vector field $X$ is transversal to the interior $\mathring\Sigma$ of $\Sigma$.
\item[iii)] Every orbit of $X$, except the spanning orbits, passes through $\mathring\Sigma$ in forward and backward time.\\[-2ex]
\end{itemize}
If in addition $M$ carries a smooth involution $\rho$, a global surface of section $\Sigma$ invariant under $\rho$ is called an {\bf invariant global surface of section}. 
\end{Def}

Throughout this paper, we are interested in starshaped hypersurfaces $M \subset \mathbb{C}^2$ invariant under $\tilde\rho$ with the Reeb vector field $X$ on $M$.  A starshaped hypersurface $M \subset \mathbb{C}^2$ is called {\bf dynamically convex}, if the Conley-Zehnder index of every periodic orbit of  $X$ is at least 3. If $M\subset \C^2$ bounds a strictly convex domain, it is dynamically convex, see \cite{HWZ98,Lon02}. Note that the notion of dynamical convexity is a symplectic invariant notion, while the notion of convexity is not. We mention the fact that there are examples of energy hypersurfaces for the restricted three body problem which are dynamically convex, although their Levi-Civita embedding is not convex, see \cite{AFFvK13}.

\begin{Thm}\label{thm:invglobal}
Assume that $M \subset \mathbb{C}^2$ is a starshaped hypersurface invariant under $\tilde\rho$ such that $(M,X)$ in $\C^2$ is dynamically convex. Then $M$ has an invariant global disk-like surface of section $\DD$ for the Reeb flow with the following properties.
\begin{itemize}
\item[1.] The Conley-Zehnder index of the spanning orbit of $\DD$ is either 3 or 4;
\item[2.] $d\lambda|_{\mathring\DD}$ is symplectic and the area $\int_{\mathring\DD}d\lambda$ is identical to the period of the spanning orbit;
\item[3.] The Poincar\'e return map $f:(\mathring\DD,d\lambda|_{\mathring\DD})\to(\mathring\DD,d\lambda|_{\mathring\DD})$ is symplectic and the involution $\rho|_{\mathring\DD}:(\mathring\DD,d\lambda|_{\mathring\DD})\to(\mathring\DD,d\lambda|_{\dot\DD})$ is antisymplectic.
\end{itemize}
\end{Thm}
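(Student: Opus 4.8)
The plan is to follow the strategy outlined in the introduction: embed $\C^2\into\CP^2$, study real holomorphic curves in $\CP^2$ asymptotic to Reeb orbits on $M$, and perform an SFT-style neck-stretching argument along $M$ to extract an invariant finite energy plane, which we then show is fast and embedded so that Hryniewicz's theorem produces the global surface of section.

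\begin{proof}[Sketch of proof of Theorem \ref{thm:invglobal}]
\textbf{Step 1: Perturbation and setup.} We may first perturb $M$ inside the space of $\tilde\rho$-invariant starshaped hypersurfaces so that it becomes nondegenerate, while preserving dynamical convexity (an open condition up to a uniform index bound). We then complete $\C^2$ to $\CP^2$, which carries the Fubini--Study symplectic form compatible with the inclusion, together with the antiholomorphic involution $\wt\rho$ extending complex conjugation; the fixed locus is $\RP^2$. The starshaped hypersurface $M$ separates $\CP^2$ into the bounded starshaped domain and a neighborhood of the line at infinity $\ell_\infty$; both pieces, and $M$ itself, are $\wt\rho$-invariant.

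\textbf{Step 2: Real holomorphic spheres in $\CP^2$.} Choose $J\in\JJ_\rho$ and an antiinvariant $\om$-compatible almost complex structure on $\CP^2$ agreeing with the SFT-like structure near $M$ and agreeing near $\ell_\infty$ with a standard integrable structure for which $\ell_\infty$ is holomorphic. We consider the moduli space of real rational $J$-holomorphic curves in the degree-one class passing through a prescribed conjugation-invariant finite set of real points (a point on $\RP^2$ inside $M$, plus a point on $\ell_\infty$). In the integrable picture these are just projective lines; by automatic transversality for immersed spheres in a $4$-manifold (index/adjunction in the sense of Gromov--McDuff--Wendl), the real part of this moduli space is a compact $1$-manifold, its elements are embedded, and each line meets $M$ in a single transverse circle. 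The fixed-point constraint forces these curves to be $\wt\rho$-invariant.

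\textbf{Step 3: Neck stretching along $M$.} Insert an increasingly long symplectization collar $[-T,T]\times M$ and let $T\to\infty$. By the SFT compactness theorem (in its equivariant form: the whole configuration is $\wt\rho$-invariant), the family of real lines degenerates into a holomorphic building with pieces in the symplectization $\R\times M$ and in the two completed halves of $\CP^2$. The top piece living in the completion of the bounded domain, after forgetting the rest, yields a finite energy plane $\tilde u=(a,u):\C\to\R\times M$ which is invariant, i.e. $\wt\rho\circ\tilde u=\tilde u\circ I$ (the real structure on the domain descends from the real structure on the sphere). Positivity of area guarantees $E(\tilde u)>0$, and embeddedness of the approximating curves, together with the adjunction/intersection bookkeeping, shows $u$ is an embedding and that the building has no nodes and exactly one breaking orbit $P$, which is a \emph{symmetric} periodic orbit (its trace is $\rho$-invariant because $u$ is invariant).

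\textbf{Step 4: Showing the plane is fast.} This is the heart of the argument and the main obstacle, precisely because $J\in\JJ_\rho$ is not generic. One cannot simply invoke $\CZ(P)\geq 3$ plus an index formula as in \cite{HWZ03}; instead, since $P$ is symmetric it carries a second, ``Lagrangian'' index coming from viewing $P$ as an intersection point of the Lagrangian $\Fix(\wt\rho)=\RP^2\cap M$ with its Reeb image. Following the winding-number analysis of \cite{HWZ95b} and the framework of \cite{Hry12}, one shows that this Lagrangian index, which is controlled by the invariance of $\tilde u$, forces the asymptotic winding number of $\tilde u$ relative to $P$ to equal the extremal value; combined with $u$ being an embedding and Hryniewicz's characterization, this is exactly the statement that $\tilde u$ is fast. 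The relation between the two indices is a H\"ormander index, from which one reads off $\CZ(P)\leq 4$; together with dynamical convexity $\CZ(P)\geq 3$ this gives assertion 1.

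\textbf{Step 5: From a fast invariant plane to an invariant surface of section.} By Hryniewicz's theorem (building on \cite{HWZ98}), since $M$ is dynamically convex an embedded fast finite energy plane $\tilde u$ has image $u(\C)\cup P$ equal to an embedded disk $\DD\into M$ which is a global surface of section for the Reeb flow with spanning orbit $P$. Invariance of $\tilde u$ means $\rho$ maps $u(\C)$ to itself (with $I$ reversing the domain orientation), hence $\DD$ is $\wt\rho$-invariant: it is an invariant global disk-like surface of section. Property 2 follows from Stokes' theorem applied to $d\lambda$ on $\DD$: $\int_{\mathring\DD}d\lambda=\int_{\p\DD}\alpha=T$, the period of $P$, and $d\lambda$ restricted to $\mathring\DD$ is nondegenerate because $X$ is transverse to $\mathring\DD$. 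Property 3 is formal: the return map is an area-preserving map of $(\mathring\DD,d\lambda)$ because the Reeb flow preserves $\lambda$, and $\rho^*\alpha=-\alpha$ makes $\rho|_{\mathring\DD}$ antisymplectic and intertwines the return map with its inverse.
\end{proof}
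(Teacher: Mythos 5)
Your overall strategy matches the paper's: compactify $\C^2$ into $\C P^2$, neck-stretch along $M$, extract a finite energy plane from the limit building, bound its winding numbers using the symmetric-periodic-orbit index machinery, and feed the resulting fast embedded invariant plane into Hryniewicz's theorem. Steps 1, 2, and 5 are essentially correct and consistent with the paper.

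The genuine gap is in Step 3. You assert that the SFT limit building has ``no nodes and exactly one breaking orbit $P$'' and that the extracted finite energy plane ``is invariant (the real structure on the domain descends from the real structure on the sphere).'' Neither claim is automatic, and in fact neither can be read off from compactness or adjunction alone. The SFT limit of the degree-one spheres is a multi-story holomorphic building with possibly many pieces per level, and the pieces disjoint from $\Fix\tilde\rho$ come in conjugate pairs rather than being individually invariant. The distinguished plane $\tilde u_{k,\ell}$ in the paper is singled out by the normalization $\min a_{k,\ell}=0$, and proving it is both invariant and somewhere injective is precisely the content of Proposition \ref{prop:existence of invariant finite energy plane}: one argues inductively over the building, using the Fredholm index formula for somewhere injective finite energy half-spheres (Theorem \ref{thm:trasversality}) for a generic $\wt J\in\JJ_{\tilde\rho}$ together with the index-under-iteration monotonicity (Corollary \ref{cor:index behaviors}) to bound every Conley--Zehnder index in $\{1,2\}$ and every chord Robbin--Salamon index in $\{1/2,3/2\}$, and then derives a contradiction (via the constrained index at $o_\infty$, Lemma \ref{Lemma:Fredholm indices}) if $\tilde u_{k,\ell}$ were nonsymmetric or multiply covered. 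Without this index bookkeeping there is no control on the building, and the existence of the desired invariant plane does not follow.

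Relatedly, in Step 4 you say the Lagrangian index ``is controlled by the invariance of $\tilde u$.'' The controlling mechanism is different: invariance only tells you that $\tilde u$ has a well-defined half-plane $\tilde u_I$ with the doubling relation $\wind_\infty(\tilde u)=2\wind_\infty(\tilde u_I)$ (Proposition \ref{Prop:wind=2 relative wind}). The upper bound $\mu_\RS(C)\le 3/2$ comes from the transversality of the somewhere injective half-sphere for generic antiinvariant $J$ (a nontrivial fact precisely because antiinvariant $J$'s are non-generic among all $J$'s), not from invariance per se. Once $\mu_\RS(C)=3/2$ is established, Proposition \ref{Prop:relative wind and index} gives $\wind_\infty(\tilde u_I)\le 1/2$, hence $\wind_\infty(\tilde u)=1$ and $\wind_\pi(\tilde u)=0$, which is fastness; and the H\"ormander-index estimate (Proposition \ref{prop:Hormander index}) then gives $\mu_\CZ(P)\in\{3,4\}$. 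So the conclusion you reach is right, but the chain of implications you sketch omits the transversality input that makes the index bound work. Finally, a small geometric slip: the top piece $\tilde u_q$ of the building lives in $\wt V$ (the completion of the neighborhood of the line at infinity), not in the completion of the bounded domain; the finite energy plane one keeps is a bottom- or middle-level piece in the symplectization $\R\times M$.
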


Theorem \ref{thm:invglobal} is proved in Section 6. The existence of an invariant global disk-like surface of section  immediately implies that there is a symmetric open book decomposition of $M$ of which every page is a global disk-like surface of section,  see Remark \ref{rmk:symmetric open book decomposition}. In consequence we can find two invariant global disk-like surfaces of section. In fact, it is possible to construct a symmetric holomorphic open book decomposition by following a standard Fredholm theory in \cite{HWZ99,Hry12} but we decided not to include this because we could not find any advantage of holomorphicity in the present situation. However if $(M,X)$ carries a dihedral symmetry, we expect that this is needed to construct an open book decomposition which respects the dihedral symmetry. The dynamical convexity assumption can be weakened as in \cite{HS11}. Moreover we expect that our idea can be used to construct a symmetric finite energy foliation in the absence of dynamical convexity, see \cite{HWZ03}.

 In the following example borrowed from \cite[Lemma 1.6]{HWZ95a}, we can see a symmetric open book decomposition and two invariant global disk-like surfaces of section in ellipsoids.

\begin{Rmk}[{\em Symmetric open book decomposition of ellipsoids}] Consider an ellipsoid
$$
E=\bigg\{(z_1,z_2)\in\C^2\,\bigg|\,\frac{|z_1|^2}{r_1^2}+\frac{|z_2|^2}{r_2^2}=1\bigg\}.
$$
An $S^1$-family of holomorphic planes
$$
\tilde u_\theta:\C\to \R\x E,\quad\theta\in S^1
$$
where $\theta$ is the angular coordinate on $\C$ is defined by
$$
\tilde u_\theta(z)=(a_\theta(z),u_\theta(z))=\bigg(\frac{1}{2}\log\sqrt{|z|^2+1},\frac{(r_1z,r_2\theta)}{\sqrt{|z|^2+1}}\bigg).
$$
Therefore we have a open book decomposition of $E$ of which pages are given by
$$
\DD_\theta=\big\{(z_1,z_2)\in E\,\big|\, z_2\in \theta\R_+ \;\;\textrm {with}\;\;|z_2|\in[0,r_2]\big\}
$$
where $\theta\R_+=\{a\theta|a\in\R_+\}$. Moreover every page is a global disk-like surface of section for the Reeb vector field $X$ on $E$. In particular, $\DD_1$ and $\DD_{-1}$ are global disk-like surfaces of section invariant under $\rho$.
\end{Rmk}

\noindent{\em Reversibility of the Poincar\'e return map.} In the presence of an invariant global disk-like surface of section $(\DD,\rho|_{\DD})$, to find a symmetric periodic orbit is reduced to find a symmetric periodic point of the Poincar\'e return map $f$ on $(\mathring\DD,\rho|_{\mathring\DD})$. A point $x\in\DD$ is called {\em symmetric periodic point} if 
$$
f^k(x)=x,\quad f^\ell(x)=\rho(x)\quad\textrm{for some }\, k,\,\ell\in\N.
$$ 
Due to $\rho^*X=-X$,  $f$ obeys the {\em reversible condition}, namely 
$$
f\circ\rho|_{\mathring\DD}=\rho|_{\mathring\DD}\circ f^{-1}.
$$
Concerning the study of symmetric periodic points of reversible maps, we refer to \cite{Kan14}. In particular, it was proved that every area preserving map on the open unit disk in the complex plane reversible with respect to a reflection about an axis has at least one symmetric fixed point, i.e. $k=\ell=1$, and that there are infinitely many symmetric periodic points provided another (possibly nonsymmetric) periodic point. Note that the Poincar\'e return map and the involution on $\mathring\DD$ is smoothly conjugate to an area preserving map and the reflection on the open unit disk, see \cite[Section 5]{HWZ98}. Since the spanning orbit of $\DD$ is symmetric, we have the following consequence which refines  \cite[Theorem 1.1]{HWZ98}. 
\begin{Thm} 
Let $(M,X,\rho)$ be as in Theorem \ref{thm:invglobal}. There are either two or infinitely many symmetric periodic orbits. Furthermore, a nonsymmetric periodic orbit ensures infinitely many symmetric periodic orbits.
\end{Thm}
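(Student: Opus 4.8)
The plan is to transplant the problem to the invariant global surface of section furnished by Theorem~\ref{thm:invglobal} and then quote the results on reversible area preserving disk maps from \cite{Kan14}. First I would apply Theorem~\ref{thm:invglobal} to obtain an invariant global disk-like surface of section $\DD$ with spanning orbit $P_0$; since $\partial\DD$ is the trace of $P_0$ and $\DD$ is $\rho$-invariant, $P_0$ is itself a symmetric periodic orbit. By items~2 and~3 of Theorem~\ref{thm:invglobal} the Poincar\'e return map $f$ of the Reeb flow is an area preserving diffeomorphism of $(\mathring\DD,d\lambda|_{\mathring\DD})$, the restriction $\rho|_{\mathring\DD}$ is an antisymplectic involution, and $\rho^*X=-X$ gives the reversibility identity $f\circ\rho|_{\mathring\DD}=\rho|_{\mathring\DD}\circ f^{-1}$. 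As recalled before the statement, the change of coordinates of \cite[Section~5]{HWZ98} turns the pair $(f,\rho|_{\mathring\DD})$ into an area preserving map of the open unit disk together with a reflection about a diameter, so that \cite{Kan14} applies: such a map always has at least one symmetric fixed point, and as soon as it possesses one further periodic point --- symmetric or not --- it has infinitely many symmetric periodic points.

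Next I would set up the dictionary between symmetric periodic orbits of $X$ and symmetric periodic points of $f$. By property~iii) of a global surface of section every periodic Reeb orbit other than $P_0$ meets $\mathring\DD$, and by property~ii) it does so transversally, hence in a single finite $f$-orbit; conversely any periodic point of $f$ lies on a closed Reeb orbit different from $P_0$, and distinct Reeb orbits give disjoint $f$-orbits of periodic points. If $x\in\mathring\DD$ lies on a periodic Reeb orbit $P$ with $P=\rho(P)$, then $\rho$ permutes the finite set $P\cap\mathring\DD$, so $\rho(x)=f^\ell(x)$ for some $\ell$ and $x$ is automatically a symmetric periodic point; using $\rho\circ\varphi_t=\varphi_{-t}\circ\rho$ for the Reeb flow $\varphi_t$ one checks conversely that a symmetric periodic point always lies on a symmetric periodic Reeb orbit. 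Thus the symmetric periodic orbits distinct from $P_0$ correspond bijectively to the $f$-orbits of symmetric periodic points of $f$, and the symmetric fixed point of $f$ provided by \cite{Kan14}, being a genuine fixed point of $f$ lying on $\Fix(\rho|_{\mathring\DD})$, has a symmetric Reeb orbit which, living in $\mathring\DD$, is different from $P_0$.

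Finally I would count. Together with $P_0$, the Reeb orbit of that symmetric fixed point already gives at least two symmetric periodic orbits. If there were a third, it would contribute an $f$-orbit of symmetric periodic points distinct from that of the fixed point, hence a periodic point of $f$ different from it, and \cite{Kan14} would then force infinitely many symmetric periodic points, hence infinitely many symmetric periodic orbits; this is the claimed dichotomy. For the last sentence, a nonsymmetric periodic orbit $Q$ is distinct from $P_0$, so it meets $\mathring\DD$ in a periodic point of $f$ whose Reeb orbit is not symmetric and therefore is not the Reeb orbit of the symmetric fixed point; again \cite{Kan14} yields infinitely many symmetric periodic points, equivalently infinitely many symmetric periodic orbits. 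Essentially all of the analytic weight is carried by Theorem~\ref{thm:invglobal} and by \cite{Kan14}, so I do not expect a serious obstacle; the points requiring care are the bookkeeping in the dictionary --- in particular verifying that the extra periodic point extracted from a third symmetric orbit or from a nonsymmetric orbit is genuinely a periodic point distinct from the fixed point of \cite{Kan14} --- and checking that the normalization of \cite[Section~5]{HWZ98} is compatible with $\rho$, which is why one wants $\rho|_{\mathring\DD}$ to become a reflection about a diameter rather than an arbitrary antisymplectic involution of the disk.
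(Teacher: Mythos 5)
Your proposal is correct and follows exactly the route the paper intends: apply Theorem~\ref{thm:invglobal} to get an invariant global disk-like surface of section whose spanning orbit is already symmetric, conjugate the reversible return map to an area preserving map of the open disk with a reflection as in \cite[Section 5]{HWZ98}, and then invoke \cite{Kan14} for a symmetric fixed point and for the ``one more periodic point implies infinitely many symmetric ones'' dichotomy. The paper does not spell out the dictionary between symmetric Reeb orbits and symmetric periodic points of $f$, nor the bookkeeping in the final count, but the discussion preceding the theorem makes clear this is the intended argument; your write-up simply supplies those details, which are correct (in particular your observation that the spanning orbit does not meet $\mathring\DD$, so the Reeb orbit through the symmetric fixed point is a genuinely new symmetric orbit).
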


Denote by $P'$ a symmetric periodic orbit which corresponds to a symmetric fixed point of the Poincar\'e return map $f$ on $(\mathring\DD,\rho|_{\mathring\DD})$ guaranteed by \cite{Kan14}. The following result follows immediately from Theorem 1.2 and Lemma 4.13 in \cite{Hry11}.
\begin{Thm}
The symmetric periodic orbit $P'$ is a spanning orbit of another invariant global disk-like surface of section.
\end{Thm}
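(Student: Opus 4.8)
The plan is to verify that the symmetric orbit $P'$ satisfies the hypotheses of Hryniewicz's criterion for being the binding of a disk-like global surface of section, and then to run that construction $\rho$-equivariantly. First I would record the properties of $P'$. Since $x\in\mathring\DD$ is a fixed point of the first return map $f$, the Reeb orbit $P'$ through $x$ meets $\mathring\DD$ transversally in exactly the one point $x$; hence $P'$ is a simple periodic orbit, and since $\partial\DD\cap\mathring\DD=\emptyset$ it is distinct from the spanning orbit of $\DD$. Nondegeneracy of $M$ makes $P'$ nondegenerate, and dynamical convexity gives $\mu_{\CZ}(P')\geq 3$. Finally, because $x\in\Fix(\rho|_{\mathring\DD})$ and $\rho^*X=-X$, the Reeb trajectory through $x$ coincides, up to orientation reversal, with the trajectory through $\rho(x)=x$, so the trace of $P'$ is $\rho$-invariant: $P'$ is a symmetric periodic orbit.

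Next I would invoke \cite[Lemma 4.13]{Hry11}, to the effect that a periodic Reeb orbit which intersects a disk-like global surface of section transversally in a single point is unknotted and has self-linking number $-1$. Applying this to $P'$ and the page $\DD$ supplies the topological inputs $\mathrm{sl}(P')=-1$ and unknottedness of $P'$; together with $\mu_{\CZ}(P')\geq 3$ these are exactly the hypotheses of \cite[Theorem 1.2]{Hry11}. That theorem then yields an embedded fast finite energy plane asymptotic to $P'$ and an associated $S^1$-family of such planes whose projections to $M$ constitute an open book decomposition with disk-like pages, each of them a global surface of section with spanning orbit $P'$. At this stage the surface of section is not yet known to be invariant.

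To obtain invariance I would carry out the previous step starting from an antiinvariant $J\in\JJ_\rho$, so that $\wt J$ is $\tilde\rho$-antiinvariant. Because $P'$ is symmetric, $\tilde v\mapsto\tilde\rho\circ\tilde v\circ I$ followed by the reparametrisation that restores the asymptotic parametrisation of the plane---available precisely because $P'$ is symmetric---sends an embedded fast finite energy plane asymptotic to $P'$ to another such plane, and hence induces a $\Z/2$-action on the circle of pages produced above. As in the proof of Theorem \ref{thm:invglobal}, this action reverses the orientation of the circle, so it has a fixed page $\DD'$, and $\DD'$ is the required invariant disk-like global surface of section with spanning orbit $P'$. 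The only genuinely delicate point, and the one requiring the methods of this paper rather than a bare appeal to \cite{Hry11}, is this last passage to antiinvariant almost complex structures: such $J$ are non-generic, so one has to check that the moduli space of embedded fast finite energy planes asymptotic to $P'$ built from $J\in\JJ_\rho$ is still the expected circle and that conjugation acts on it as a reflection, with fixed points, rather than as a free rotation. By contrast, the first two steps are direct consequences of \cite[Lemma 4.13]{Hry11} and \cite[Theorem 1.2]{Hry11}.
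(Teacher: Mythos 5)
Your argument is a faithful and in fact more careful expansion of what the paper compresses into a single sentence (``follows immediately from Theorem 1.2 and Lemma 4.13 in~\cite{Hry11}''). The first part---that $P'$ is a simple symmetric nondegenerate Reeb orbit with $\mu_{\CZ}(P')\geq 3$, and that \cite[Lemma 4.13]{Hry11} supplies unknottedness and $\mathrm{sl}(P')=-1$, so that \cite[Theorem 1.2]{Hry11} applies---is exactly the intended reduction. Where you go beyond the paper is in making explicit that invariance of the resulting surface of section is \emph{not} automatic: Hryniewicz's theorem alone yields a GSS spanned by $P'$, but there is no a priori reason that a given page, or its image under $\rho$, is preserved, since $\rho(\Sigma)$ need not lie in the $S^1$-family generated by flowing a fixed $\Sigma$. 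Your fix---repeat the construction with $J\in\JJ_\rho$, so that $\tilde v\mapsto\tilde\rho\circ\tilde v\circ I$ acts on the circle of fast planes asymptotic to $P'$, and observe that (because $\rho$ is orientation-reversing on $M$, as in Remark~\ref{rmk:symmetric open book decomposition}) this action is a reflection with two fixed pages---is precisely the mechanism that Sections~4--6 of the paper put in place. Your flagged ``delicate point'' is real but is largely handled by the tools already assembled: automatic transversality (Proposition~\ref{prop:automatic transversality}, which holds for \emph{every} cylindrical $\wt J$, generic or not), Hryniewicz's $\R$-quotient compactness for fast planes, and the invariant-curve index and winding calculus of Section~4 which controls $\wind_\pi$ and $\wind_\infty$ for the invariant half-planes; only the initial non-emptiness of the fast moduli space for antiinvariant $J$ still needs either a deformation or a direct equivariant Bishop/stretching argument. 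So the proposal is correct, follows the same route the paper alludes to, and is more explicit about the one step the paper leaves implicit.
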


\section{Indices for symmetric periodic orbits}
In \cite{HWZ95b}, Hofer, Wysocki, and Zehnder introduced an alternative equivalent definition of the Conley-Zehnder index using winding numbers of eigenvalues of a certain self-adjoint operator. This allows us to link the asymptotic behavior of a finite energy surface to the Conley-Zehnder indices of the asymptotic periodic orbit, see Section 3. In order for the study of invariant finite energy surfaces, we will prove analogous results for the Robbin-Salamon index and establish a relation between the Conley-Zehnder index of a symmetric periodic orbit and its Robbin-Salamon index.

\subsection{Robbin-Salamon index}
We denote by $\mathrm{Lag}(\R^{2n})$ the Grassmanian manifold of Lagrangian subspaces in $\R^{2n}$ with the standard symplectic structure $\om_{\textrm{std}}=d\mathbf{x}\wedge d\mathbf{y}$. For a path of Lagrangian subspaces $\Lambda:[0,T]\to\mathrm{Lag}(\R^{2n})$ and a single Lagrangian $V\in\mathrm{Lag}(\R^{2n})$, Robbin and Salamon in \cite{RS93} define a Maslov-type index 
$$
\mu_\RS(\Lambda,V)\in\frac{1}{2}\Z.
$$
This takes values in $\Z+\frac{1}{2}$ if $n=1$, $\Lambda(0)=V$, and $\Lambda(T)\cap V=\{0\}$.  Among many other properties, the Robbin-Salamon index has the following properties, which will be used in
the sequel. 
\begin{itemize}
\item[1.]({\em Maslov}) If $\Lambda:[0,T]\to\mathrm{Lag}(\R^{2n})$ is a loop, i.e. $\Lambda(0)=\Lambda(T)$, $\mu_\RS(\Lambda,V)$ agrees with the Maslov index. In particular it is independent of the choice of $V$.
\item[2.]({\em Reversal}) For $\Lambda:[0,T]\to\mathrm{Lag}(\R^{2n})$,
$$
\mu_\RS(\Lambda,V)=-\mu_\RS(\overline\Lambda,V)
$$
where $\overline\Lambda(t):=\Lambda(T-t)$.
\item[3.]({\em Naturality}) If $\Gamma:[0,T]\to\Sp(\R^{2n})$ is a path of symplectic matrices, for $V_1,\,V_2\in\mathrm{Lag}(\R^{2n})$,
$$
\mu_\RS(\Gamma V_1,V_2)=-\mu_\RS(\Gamma^{-1} V_2,V_1).
$$
In contrast if $\Gamma$ is a path of antisymplectic matrices,
$$
\mu_\RS(\Gamma V_1,V_2)=\mu_\RS(\Gamma^{-1} V_2,V_1).
$$
\item[4.]({\em Homotopy}) For $\Lambda_i:[0,T_i]\to\mathrm{Lag}(\R^{2n})$, $i=1,\,2$ with the same ends  $\Lambda_1(0)=\Lambda_2(0)$ and $\Lambda_1(T_1)=\Lambda_2(T_2)$, if $\Lambda_1$ and $\Lambda_2$ are homotopic relative to their end points,
$$
\mu_\RS(\Lambda_1,V)=\mu_\RS(\Lambda_2,V).
$$
\item[5.] ({\em Catenation}) For $\Lambda_i:[0,T_i]\to\mathrm{Lag}(\R^{2n})$, $i=1,\,2$ with $\Lambda_1(T_1)=\Lambda_2(0)$,
$$
\mu_\RS(\Lambda_1,V)+\mu_\RS(\Lambda_2,V)=\mu_\RS(\Lambda_2\#\Lambda_1,V)
$$
where the operation $\#$ joins two paths so that $\Lambda_2\#\Lambda_1:[0,T_1+T_2]\to\mathrm{Lag}(\R^{2n})$. 
\end{itemize}

If $\Psi:[0,T]\to\Sp(\R^{2n})$ is a path of symplectic matrices with $\Psi(0)=\one$ and $\det(\Psi(T)-\one_{\R^{2n}})\neq0$, the {\bf Conley-Zehnder index} of $\Psi$ is defined by
$$
\mu_{\CZ}(\Psi):=\mu_{\RS}(\overline{\mathrm{gr}}(\Psi),\overline{\Delta})
$$
where $\overline{\mathrm{gr}}(\Psi)=\{(x,-\Psi(x)\,|\,x\in\R^{2n}\}$ and $\overline{\Delta}=\{(x,-x)\,|\,x\in\R^{2n}\}$ which are Lagrangian in $(\R^{2n}\x\R^{2n},\om_\textrm{std}\oplus \om_\textrm{std})$. This coincides with the original definition of the Conley-Zehnder index, see \cite{CZ84,SZ92}. 

The following propositions for loops of Lagrangians will be useful later.

\begin{Prop}\label{Prop:loop property of RS-index}
Let $V,\,\Lambda(t)\in\mathrm{Lag}(\R^{2n})$ for $t\in[0,T]$ with $\Lambda(0)=V$. If $\Gamma:[0,T]\to\Sp(\R^{2n})$ such that $\Gamma(0)=\one_{\R^{2n}}$ and $\Gamma(T)V=V$, we have
$$
\mu_{\RS}(\Gamma\Lambda,V)=\mu_{\RS}(\Lambda,V)+\mu_{\RS}(\Gamma V,V).
$$
\end{Prop}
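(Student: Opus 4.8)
The plan is to realise $\mu_{\RS}(\Gamma\Lambda,V)$ as the Robbin--Salamon index of the diagonal of the two--parameter family of Lagrangian subspaces
$$
F\colon[0,T]\times[0,T]\longrightarrow\mathrm{Lag}(\R^{2n}),\qquad F(s,t):=\Gamma(s)\Lambda(t),
$$
and to split that diagonal into two pieces using the Homotopy and Catenation properties. By the hypotheses the corner values of $F$ are $F(0,0)=\Gamma(0)V=V$, $F(T,0)=\Gamma(T)V=V$, $F(0,T)=\Gamma(0)\Lambda(T)=\Lambda(T)$, and $F(T,T)=\Gamma(T)\Lambda(T)$, and $\Gamma\Lambda$ is $F$ along the diagonal from $(0,0)$ to $(T,T)$. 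Since the square $[0,T]^2$ is contractible, this diagonal is homotopic with fixed endpoints to the broken path running first along the bottom edge from $(0,0)$ to $(T,0)$ and then up the right edge from $(T,0)$ to $(T,T)$; the two edge--paths catenate because both meet the corner $(T,0)$ at the value $V$. (The remaining corner value $F(0,T)=\Lambda(T)$, where the hypothesis $\Gamma(0)\Lambda(T)=\Lambda(T)$ is used, is what is needed if one prefers instead to deform along the left edge and then the top edge.) Pushing this homotopy forward by $F$ and applying the Homotopy and Catenation properties yields
$$
\mu_{\RS}(\Gamma\Lambda,V)=\mu_{\RS}\big(s\mapsto\Gamma(s)V,\,V\big)+\mu_{\RS}\big(\Gamma(T)\Lambda,\,V\big)=\mu_{\RS}(\Gamma V,V)+\mu_{\RS}(\Gamma(T)\Lambda,V),
$$
the path $s\mapsto\Gamma(s)V$ being a loop at $V$.

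It then remains to identify $\mu_{\RS}(\Gamma(T)\Lambda,V)$ with $\mu_{\RS}(\Lambda,V)$. Here $\Phi:=\Gamma(T)$ is a \emph{fixed} symplectic matrix with $\Phi V=V$, and conjugating by $\Phi$ leaves the Robbin--Salamon index unchanged: every crossing time of $\Phi\Lambda$ with $V$ is a crossing time of $\Lambda$ with $\Phi^{-1}V=V$, one has $\Phi\Lambda(t)\cap V=\Phi(\Lambda(t)\cap V)$, and the two crossing forms are intertwined by $\Phi$ and hence have equal signatures, so $\mu_{\RS}(\Phi\Lambda,V)=\mu_{\RS}(\Lambda,\Phi^{-1}V)=\mu_{\RS}(\Lambda,V)$. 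If one prefers to stay within the five listed properties, lift $\Lambda$ to a path $\Psi\colon[0,T]\to\Sp(\R^{2n})$ with $\Psi(0)=\one$ and $\Lambda=\Psi V$ (possible since $\Sp(\R^{2n})\to\mathrm{Lag}(\R^{2n})$ is a fibration); then $\Gamma(T)\Lambda=(\Gamma(T)\Psi)V$ and two applications of the Naturality property, together with $\Gamma(T)^{-1}V=V$, give $\mu_{\RS}((\Gamma(T)\Psi)V,V)=-\mu_{\RS}((\Psi^{-1}\Gamma(T)^{-1})V,V)=-\mu_{\RS}(\Psi^{-1}V,V)=\mu_{\RS}(\Psi V,V)=\mu_{\RS}(\Lambda,V)$. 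Substituting this back gives the asserted identity.

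The only step that is not routine manipulation with the Homotopy, Catenation and Maslov axioms is this invariance of $\mu_{\RS}$ under a fixed symplectic change of frame fixing $V$; that is where I would take care, by either quoting the crossing--form description of the index from \cite{RS93} or running the short Naturality computation above. I do not expect any real difficulty in the contractibility-of-the-square step.
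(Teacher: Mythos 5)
Your proof is correct, and it is a genuinely different organization of the argument from the paper's. Both proofs decompose the diagonal path $\Gamma(t)\Lambda(t)$ of the two--parameter family $F(s,t)=\Gamma(s)\Lambda(t)$ into a catenation of two boundary edges via the Homotopy and Catenation axioms, but you go along the \emph{bottom} edge $s\mapsto\Gamma(s)V$ and then the \emph{right} edge $t\mapsto\Gamma(T)\Lambda(t)$, using $\Gamma(T)V=V$ to catenate at the corner, whereas the paper goes roughly along the other two edges, first $\Lambda$ and then $t\mapsto\Gamma(t)\Lambda(T)$, using $\Gamma(0)\Lambda(T)=\Lambda(T)$ at the break. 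Consequently the residual term you have to identify is $\mu_{\RS}(\Gamma(T)\Lambda,V)=\mu_{\RS}(\Lambda,V)$, which you handle by symplectic invariance under the fixed matrix $\Gamma(T)$ fixing $V$ (either by crossing forms, or, as you note, by writing $\Lambda=\Psi V$ and applying Naturality twice); the paper's residual term is instead $\mu_{\RS}(\Gamma\Lambda(T),V)$, which it handles by Naturality followed by the Maslov loop property followed by Naturality again. The two computations are of comparable length, but yours has the small advantage of not actually needing the hypothesis $\Gamma(0)\Lambda(T)=\Lambda(T)$ (nor $\Gamma(0)V=V$) for the bottom--right decomposition --- only $\Gamma(T)V=V$ is used at the corner and in the invariance step --- whereas the paper's route uses all three. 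Your identification of where the delicate point lies (invariance of $\mu_{\RS}$ under a fixed symplectic change of frame preserving $V$) is exactly right, and the Naturality--twice argument you give closes it cleanly within the five listed axioms.
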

\begin{proof}
We first note that the two Lagrangian paths $\Gamma(t)\Lambda(t)$ and $(\Gamma(t)\Lambda(T))\#\Lambda$ are homotopic with fixed ends. Indeed, the map $F:[0,1]\x[0,T]\to\mathrm{Lag}(\R^{2n})$ given by
$$
F(s,t)=\left\{\begin{array}{ll} \Lambda(2t)\quad& t\leq \frac{sT}{2},\\[.5ex]
\Gamma\Big(\frac{2}{2-s}\big(t-\frac{sT}{2}\big)\Big)\Lambda\Big(s+\frac{2(1-s)}{2-s}\big(t-\frac{sT}{2}\big)\Big) &t\geq\frac{sT}{2}
\end{array}\right.
$$
and a reparametrization of $F(1,t)$ provide a homotopy between them. By the homotopy property and the catenation property, we have
$$
\mu_{\RS}(\Gamma\Lambda,V)=\mu_{\RS}(\Lambda,V)+\mu_{\RS}(\Gamma\Lambda(T),V).
$$
We can rewrite the last term using the naturality property and the Maslov property as follows since  $\Gamma^{-1}V$ is a loop of Lagrangian subspaces by the assumption.
$$
\mu_{\RS}(\Gamma\Lambda(T),V)=-\mu_\RS(\Gamma^{-1}V,\Lambda(T))=-\mu_\RS(\Gamma^{-1}V,V)=\mu_\RS(\Gamma V,V).
$$
This proves the proposition.
\end{proof}

\begin{Prop}\label{Prop:loop property of RS-index 2}
Let $I_V$ be an antisymplectic involution on $\R^{2n}$ with $\Fix I_V=V\in\mathrm{Lag}(\R^{2n})$ and let $\Gamma:[0,T]\to\Sp(\R^{2n})$ satisfying
$$
I_V\Gamma(t)I_V=\Gamma(T-t).
$$
Then we have
$$
\mu_\RS(\Gamma V,V)=2\mu_\RS(\Gamma_1 V,V)
$$
where $\Gamma_1:=\Gamma|_{[0,\frac{T}{2}]}:[0,\frac{T}{2}]\to\Sp(\R^{2n})$.
\end{Prop}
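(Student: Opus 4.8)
The plan is to use the symmetry relation $I_V\Gamma(t)I_V=\Gamma(T-t)$ to decompose the path $\Gamma V$ over $[0,T]$ into the concatenation of $\Gamma_1 V$ over $[0,\frac{T}{2}]$ and a second half over $[\frac{T}{2},T]$, and then to recognize that symmetric second half, after applying $I_V$, as essentially a copy of $\Gamma_1 V$ again. Concretely, write $\Gamma V = \Gamma_2 V \# \Gamma_1 V$ where $\Gamma_2:=\Gamma|_{[\frac{T}{2},T]}$. By the catenation property we get
$$
\mu_\RS(\Gamma V,V)=\mu_\RS(\Gamma_1 V,V)+\mu_\RS(\Gamma_2 V,V),
$$
so it remains to show $\mu_\RS(\Gamma_2 V,V)=\mu_\RS(\Gamma_1 V,V)$.

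For the second half, I would substitute $t\mapsto T-t$ and use the defining relation to write, for $t\in[0,\frac{T}{2}]$, $\Gamma(T-t)=I_V\Gamma(t)I_V$. Thus the reversed path $\overline{\Gamma_2}$ is given by $t\mapsto I_V\Gamma_1(t)I_V$, i.e. $\overline{\Gamma_2}\,V = I_V\Gamma_1 I_V V = I_V\Gamma_1 V$, using $I_V V = V$ since $V=\Fix I_V$. Now apply the Reversal property to pass from $\Gamma_2$ to $\overline{\Gamma_2}$ and the (antisymplectic) Naturality property to move $I_V$ across. The key point is that $I_V$ is a \emph{constant} antisymplectic map fixing $V$, so conjugating/pushing forward a Lagrangian path by it does not change the Robbin--Salamon index relative to $V$: concretely, $\mu_\RS(I_V\Lambda, I_V V)=\pm\mu_\RS(\Lambda,V)$ with the sign dictated by whether $I_V$ is symplectic or antisymplectic, and since $I_VV=V$ this reads $\mu_\RS(I_V\Lambda,V)=-\mu_\RS(\Lambda,V)$ in the antisymplectic case (this is the antisymplectic clause of the Naturality property applied to the constant path $\Gamma\equiv I_V$, together with $\Gamma^{-1}=I_V$). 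Combining with Reversal, which contributes another sign flip, the two signs cancel and we obtain $\mu_\RS(\Gamma_2 V,V)=\mu_\RS(\Gamma_1 V,V)$, which gives the claim.

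One point of care: the Naturality property as stated applies to a path $\Gamma$ of (anti)symplectic matrices acting on a path of Lagrangians, whereas here I want the effect of a \emph{single constant} antisymplectic matrix $I_V$ pushing forward a Lagrangian path. I would handle this by applying the stated Naturality with the constant path $t\mapsto I_V$ (which is trivially antisymplectic-valued) and noting $\mu_\RS(I_V\Lambda, V)=\mu_\RS(I_V^{-1}V,\Lambda)=\mu_\RS(V,\Lambda)$; since a path sitting at the constant Lagrangian $V$ has zero crossing form, $\mu_\RS(V,\Lambda)=-\mu_\RS(\Lambda,V)$ by the symmetry $\mu_\RS(\Lambda,V)=-\mu_\RS(V,\Lambda)$ of the intersection index — alternatively one can just invoke the axiomatic characterization of $\mu_\RS$ (homotopy + catenation + a normalization) to see that precomposing by a fixed (anti)symplectic automorphism fixing $V$ changes the index exactly by the sign $\mp1$. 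Either route is routine.

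The main obstacle is really just bookkeeping the signs correctly: there are two sign flips in play — one from Reversal ($\mu_\RS(\overline\Lambda,V)=-\mu_\RS(\Lambda,V)$) and one from pushing forward by the antisymplectic involution $I_V$ — and the content of the proposition is that these two cancel, so that the two halves contribute \emph{equally} rather than oppositely. (Had $I_V$ been symplectic instead, one would get $\mu_\RS(\Gamma V,V)=0$, which is the expected degenerate behavior of a symmetric loop; the antisymplectic hypothesis is exactly what makes the two halves add up.) I would double-check the cancellation on the model case $n=1$, $\Gamma(t)=e^{i\theta t}$ on $\R^2$ with $I_V$ complex conjugation and $V=\R$, where both sides are easily computed by hand, before writing up the general argument.
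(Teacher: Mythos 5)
Your proof takes essentially the same route as the paper: decompose $\Gamma V$ by catenation into $\Gamma_1 V$ followed by $\Gamma_2 V$, then recognize the second half as $I_V\overline{\Gamma}_1 I_V$ acting on $V$ and argue that the reversal sign and the antisymplectic sign cancel. The paper simply states $\mu_\RS(\Gamma_2 V,V)=\mu_\RS(I_V\overline{\Gamma}_1 I_V V,V)=\mu_\RS(\Gamma_1 V,V)$ and leaves the double sign cancellation implicit, so your version is if anything more explicit about the mechanism.

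One small wrinkle worth flagging: the Naturality property as stated in the paper takes a \emph{path} $\Gamma$ of (anti)symplectic matrices and two \emph{fixed} Lagrangians $V_1,V_2$, whereas what you need is the effect of a single \emph{constant} antisymplectic map $I_V$ pushed against a \emph{path} of Lagrangians $\Lambda(t)$ (with $I_V V=V$). So the literal invocation $\mu_\RS(I_V\Lambda,V)=\mu_\RS(I_V^{-1}V,\Lambda)$ is not an instance of the stated Naturality, since the roles of ``moving'' and ``fixed'' are swapped. You caught this yourself and the fallback you offer — that a constant antisymplectic automorphism fixing $V$ negates every crossing form and hence negates $\mu_\RS(\cdot,V)$ — is the correct justification, and together with Reversal it gives exactly $\mu_\RS(\Gamma_2 V,V)=\mu_\RS(\Gamma_1 V,V)$ as required. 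In short: the structure of your argument matches the paper's, the result is correct, and the only caution is not to quote the Naturality axiom verbatim for a situation it doesn't literally cover; replace that one line with the crossing-form (or axiomatic) justification you already sketched.
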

\begin{proof}
Abbreviate $\Gamma_2(t):=\Gamma|_{[\frac{T}{2},T]}(t+\frac{T}{2}):[0,\frac{T}{2}]\to\Sp(\R^{2n})$ so that $\Gamma=\Gamma_1\#\Gamma_2$.
Then by the catenation property we have
$$
\mu_\RS(\Gamma V,V)=\mu_\RS(\Gamma_1 V,V)+\mu_\RS(\Gamma_2 V,V)
$$
since  $\Gamma(\frac{T}{2})V=V$ due to the assumption. Again by the assumption,
$$
\Gamma_2(t)=I_V\overline\Gamma_1(t)I_V
$$
where $\overline\Gamma_1(t):=\Gamma_1(\frac{T}{2}-t)$, and the proposition follows from
$$
\mu_\RS(\Gamma_2 V,V)=\mu_\RS(I_V\overline\Gamma_1I_V V,V)=\mu_\RS(\Gamma_1 V,V).
$$
\end{proof}
\subsection{Winding numbers}
Let $J_0$ be the standard complex structure on $\R^2$:
$$
J_0=\left(\begin{array}{cc} 0 & -1\\
1 & 0  \end{array}\right).
$$
Abbreviate $S^1_T=\R/T\Z$. For a loop of symmetric matrices 
$$
S:\R\to\mathrm{Sym}(\R^2),\quad S(t+T)=S(T),
$$ 
we define the unbounded self-adjoint operator on $L^2(S^1_T,\R^2)$ by 
\bea\label{eq:asymptotic operator}
A_S=-J_0 \frac{d}{dt}-S(t)
\eea
with domain $\dom A_S= W^{1,2}(S_T^1,\R^2)$. Since $W^{1,2}(S_T^1,\R^2)\into L^2(S_T^1,\R^2)$ is compact, $A_S$ is an operator with compact resolvent. Therefore the spectrum $\sigma(A_S)$ of $A_S$ consists entirely of  isolated real eigenvalues of $A_S$ which accumulate only at $\pm\infty$. Moreover, the multiplicity of the eigenvalues is at most 2. Assume that a nonzero element $\gamma\in L^2(S_T^1,\R^2)$ is an eigenfunction with respect to the eigenvalue $\lambda\in\R$. That is, $\gamma$ is a solution of the first order differential equation
$$
-J_0\dot\gamma(t)-S(t)\gamma(t)=\lambda \gamma(t),
$$
and thus $\gamma(t)\neq 0$ for all $t\in S^1_T$. Therefore we can associate to the eigenfunction $\gamma$ of $A_S$  the {\em winding number} defined by
$$
w(\gamma,\lambda,S):=\frac{1}{2\pi}\big[(\arg(\gamma(T))-\arg(\gamma(0))\big]\in\Z.
$$
We recall the nontrivial properties of the winding number from \cite{HWZ95b}.

\begin{Lemma}[\!\!\cite{HWZ95b}]\label{Lemma:properties of wind} Let $S$ and $A_S$ be as above. 
\begin{itemize}
\item[1.] If $\beta$ and $\gamma$ are eigenfunctions belonging to the same eigenvalue $\lambda$, 
$$
w(\beta,\lambda,S)=w(\gamma,\lambda,S).
$$
Thus we set $w(\lambda,S):=w(\gamma,\lambda,S)$ for any $\gamma$ in the eigenspace of $\lambda$.
\item[2.] For every $k\in\Z$,
$$
\#\{\lambda\in\R\,|\, w(\lambda, S)=k,\; \lambda\in\sigma(A_S)\}=2.
$$
\item[3.] For any two eigenvalues $\lambda$ and $\mu\in\sigma(A_S)$ with $\lambda\leq\mu$,
$$
w(\lambda,S)\leq w(\mu,S).
$$
\end{itemize}
\end{Lemma}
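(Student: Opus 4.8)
The statement to prove is Lemma~\ref{Lemma:properties of wind}, the three basic properties of winding numbers of eigenfunctions of the asymptotic operator $A_S = -J_0\frac{d}{dt} - S(t)$. This is a classical result of Hofer--Wysocki--Zehnder; I sketch how I would reconstruct the proof.

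The plan is to pass to a first-order ODE description of eigenfunctions and track rotation. Fix an eigenvalue $\lambda\in\sigma(A_S)$ and an eigenfunction $\gamma$, so $-J_0\dot\gamma = (S(t)+\lambda)\gamma$, i.e. $\dot\gamma = J_0(S(t)+\lambda)\gamma =: B_\lambda(t)\gamma$. Since $B_\lambda(t)$ has the form $J_0\cdot(\text{symmetric})$, the fundamental solution $\Phi_\lambda(t)$ is a path in $\Sp(\R^2)=\mathrm{SL}(2,\R)$ with $\Phi_\lambda(0)=\one$, and $\gamma(t)=\Phi_\lambda(t)\gamma(0)$ never vanishes because $\Phi_\lambda(t)$ is invertible; hence $\arg\gamma(t)$ is well-defined and the winding number $w(\gamma,\lambda,S)=\frac{1}{2\pi}(\arg\gamma(T)-\arg\gamma(0))\in\Z$ (it is an integer precisely because $\gamma(T) = \Phi_\lambda(T)\gamma(0)$ and $\gamma$ is $T$-periodic, so $\gamma(T)$ is a positive multiple of $\gamma(0)$ — in fact equal to it). For item~1, I would show the total rotation is independent of the chosen eigenfunction in the $\lambda$-eigenspace. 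If the eigenspace is one-dimensional this is trivial up to scaling by a positive constant (negative scaling changes $\arg$ by $\pi$, i.e. winding by a half-integer — but since $\gamma(T)=\gamma(0)$ forces the winding to be an exact integer, negative rescaling is consistent). If the eigenspace is two-dimensional, then $\Phi_\lambda(T)=\one$, so every nonzero vector is an eigenvector of the monodromy; the key point is that two linearly independent solutions of a planar linear ODE cannot cross each other (their Wronskian $\det(\gamma_1(t),\gamma_2(t))=\det\Phi_\lambda(t)=1\neq 0$), so the angle between $\gamma_1(t)$ and $\gamma_2(t)$ stays in a fixed open half-circle and both acquire the same integer winding over $[0,T]$. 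This gives a well-defined $w(\lambda,S)$.

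For item~3, monotonicity in $\lambda$: I would use a Sturm-type comparison argument. Writing $\gamma(t) = r(t)e^{i\theta(t)}$ in polar form (identifying $\R^2\cong\C$), the angular equation reads $\dot\theta(t) = f(t,\theta(t);\lambda)$ where $f$ depends on $S(t)$, $\lambda$, and $\theta$, and one computes $\dot\theta = \langle -J_0\dot\gamma,\gamma\rangle/|\gamma|^2 = \langle (S(t)+\lambda)\gamma,\gamma\rangle/|\gamma|^2$, which is \emph{strictly increasing in $\lambda$} for fixed $\gamma$. A standard comparison principle for first-order scalar ODEs then shows that if $\lambda<\mu$, the corresponding angular functions started at the same value satisfy $\theta_\lambda(T)\le\theta_\mu(T)$, whence $w(\lambda,S)\le w(\mu,S)$. (The non-strict inequality is the sharp statement: eigenvalues come in pairs with the same winding.) For item~2, the counting statement: I would combine (a) the monotonicity from item~3, (b) a count of how eigenvalues are distributed, using that $A_S - \lambda_0$ for generic $\lambda_0$ has a fundamental solution $\Phi(t)$ and $\lambda_0\in\sigma(A_S)$ iff $1\in\sigma(\Phi_{\lambda_0}(T))$ iff $\det(\Phi_{\lambda_0}(T)-\one)=0$, and (c) the fact that the Conley--Zehnder/Maslov winding of the path $\Phi_\lambda$ changes by exactly $1$ each time $\lambda$ crosses an eigenvalue. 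More concretely: as $\lambda$ decreases from $+\infty$ to $-\infty$, the winding $w(\lambda,S)$ is a nonincreasing integer-valued step function taking every integer value, and a homotopy/normal-form computation (conjugating to $S\equiv\text{const}$, where one computes everything explicitly with $\Phi_\lambda(t)=e^{tJ_0(S+\lambda)}$) shows each value is attained on a set of total multiplicity exactly $2$. Equivalently, between consecutive winding-number jumps there lies exactly one eigenvalue of multiplicity $2$ or two of multiplicity $1$.

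The main obstacle I anticipate is item~2, the precise multiplicity count of $2$ per winding number. Items~1 and~3 follow from soft ODE/comparison arguments, but getting the exact constant $2$ (not $1$, not a variable amount) requires either a careful homotopy argument reducing to the constant-coefficient case $S(t)\equiv S_0$ — where $A_{S_0}$ is diagonalized by Fourier series $\gamma(t)=e^{2\pi i k t/T}v$ and one checks that for each $k\in\Z$ exactly two eigenvalues (counted with multiplicity) of the resulting $2\times 2$ symmetric matrix problem have winding $k$ — together with an invariance argument showing the count is homotopy-invariant (the eigenvalues move continuously and cannot jump winding without crossing, and crossings preserve the total count). Since this is precisely the content of \cite[Section 3]{HWZ95b}, I would at this point simply cite that reference for the detailed verification, which is the route the paper takes by attributing the lemma to \cite{HWZ95b}.
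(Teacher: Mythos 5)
The paper offers no proof of this lemma; it is stated with only the citation \cite{HWZ95b}, so there is no argument of the authors to compare against. Your sketch follows the outline of \cite[Section 3]{HWZ95b}: item~1 via the nonvanishing Wronskian (two linearly independent solutions of the planar linear ODE cannot cross, so their angle difference stays within a fixed interval of length $\pi$), item~3 via a Sturm-type comparison for the angular equation, and item~2 via homotopy to a constant-coefficient normal form together with Kato's perturbation theory, which you correctly identify as the hard step and defer to the reference.

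The one genuine gap is in your treatment of item~3. The comparison principle you invoke applies to two solutions of the angular ODE with the \emph{same} initial angle, but the eigenfunctions $\gamma_\lambda$ and $\gamma_\mu$ belonging to distinct eigenvalues $\lambda<\mu$ need not have proportional initial vectors, so $\theta_\lambda(0)\neq\theta_\mu(0)$ in general, and you cannot directly read off $w(\lambda,S)\leq w(\mu,S)$ from $\theta_\lambda(T)\leq\theta_\mu(T)$. The fix is the same Wronskian observation you already used in item~1, now applied for a fixed parameter: for the periodic linear ODE $\dot\gamma=J_0(S(t)+\mu)\gamma$, the net rotation $\theta(T;\theta_0)-\theta_0$ over one period varies by strictly less than $\pi$ as the initial angle $\theta_0$ ranges over all values, because the flow $\Phi_\mu(t)$ preserves the cyclic order of lines through the origin. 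Hence the $\mu$-angular solution started from $\theta_\lambda(0)$ has net rotation at least $2\pi w(\lambda,S)$ (Sturm comparison against the $\lambda$-eigenfunction) and strictly less than $2\pi w(\mu,S)+\pi$ (the less-than-$\pi$ variation at parameter $\mu$, compared against the $\mu$-eigenfunction); integrality then forces $w(\lambda,S)\leq w(\mu,S)$. Alternatively, in \cite{HWZ95b} the monotonicity is obtained as a by-product of Kato perturbation theory: spectral branches are continuous, the integer-valued winding is locally constant along each branch, and branches cannot cross out of order. One very minor slip in item~1: replacing $\gamma$ by $-\gamma$ shifts $\arg\gamma(t)$ by $\pi$ for \emph{every} $t$ simultaneously, so the winding number is unchanged, not shifted by a half-integer; your parenthetical is slightly garbled but the conclusion is unaffected.
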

We denote the maximal winding number among the negative eigenvalues by
$$
\alpha(S):=\max\big\{w(\lambda,S)\,\big|\,\lambda\in\sigma(A_S)\cap(-\infty,0)\big\}.
$$
The properties of the winding number in the above lemma imply that the following number $p(S)$ is either 0 or 1.
$$
p(S):=\min\big\{w(\lambda,S)\,\big|\,\lambda\in\sigma(A_S)\cap[0,\infty)\big\}-\alpha(S)\in\{0,1\}.
$$
We define a Maslov-type index of a loop of symmetric matrices $S:S_T^1\to\Sym(\R^2)$ by
$$
\mu(S):=2\alpha(S)+p(S)\in\Z.
$$
If we arrange the eigenvalues of $A_S$ by $\lambda_i^j(S)$, $j\in\{0,1\}$, $i\in\Z$ so that 
\beq\label{eq:arrangement of eigenvalues I}
w(\lambda_i^j,S)=i,\qquad \lambda_i^0(S)\leq\lambda_i^1(S),
\eeq
the index is rewritten by
$$
\mu(S)=\max\{2i+j\,|\,\lambda_i^j(S)<0\}.
$$

For each $\Psi:\R\to\Sp(\R^2)$ with $\Psi(0)=\one$ and $\Psi(t+T)=\Psi(t)\Psi(T)$, we associate the path of symmetric matrices
$$
S_\Psi(t):=-J_0\dot\Psi(t)\Psi(t)^{-1}
$$
satisfying 
$$
S_\Psi(t+T)=S_\Psi(t),\quad t\in\R.
$$
It turns out in \cite{HWZ95b} that if $\det(\Psi(T)-\one_{\R^{2}})\neq0$,
$$
\mu_\CZ(\Psi|_{[0,T]})=\mu(S_\Psi|_{[0,T]}).\\[2ex]
$$

Our next task is to relate the Robbin-Salamon index to the (half-) winding numbers of eigenvalues of a certain self-adjoint operator in a similar vein as above. Abusing notation we use the same letter to denote
$$
I=\left(\begin{array}{cc} 1 & 0\\
0 & -1  \end{array}\right):\R^2\to\R^2
$$
which is consistent with the original definition $I(z)=\bar z$, $z\in\C$ via the canonical identification. We consider the following Sobolev space of paths with boundary conditions.
\bean
W^{1,2}_I([0,\tfrac{T}{2}],\R^2)&=\{v\in W^{1,2}([0,\tfrac{T}{2}],\R^2)\,|\, v(0),\,v(\tfrac{T}{2})\in \R\}
\eea
where $\R$ stands for the real axis in $\R^2$. We also denote by $J_0\R$ the imaginary axis in $\R^2$. Note that $\R=\Fix I$ and $J_0\R=\Fix (-I)$.
Let $D(t)$, for $t\in[0,\frac{T}{2}]$, be symmetric matrices in $\R^2$ such that $D(0)$ and $D(\frac{T}{2})$ are diagonal matrices. As before we associate to $D$ the unbounded self-adjoint operator $A_D$ on $L^2([0,\frac{T}{2}],\R^2)$ with domain $\dom A_D= W_I^{1,2}([0,\frac{T}{2}],\R^2)$ defined by
\bea\label{eq:asymptotic operator with boundary condition}
A_D=-J_0\frac{d}{dt}-D(t).
\eea
The spectrum $\sigma(A_D)$ of $A_D$ consists of real eigenvalues of $A_D$, is discrete, and accumulates only at $\pm\infty$. Any nonzero eigenfunction $\gamma\in L^2([0,\frac{T}{2}],\R^2)$ of $A_D$ belonging to an eigenvalue $\lambda\in\R$, i.e.  
$$
-J_0\dot \gamma(t)-D(t)\gamma(t)=\lambda \gamma(t),
$$
is of class $C^1$ and never zero for all $t\in[0,\frac{T}{2}]$. Moreover we observe that  $\dot \gamma(0),\, \dot \gamma(\tfrac{T}{2})\in J_0\R$ since $D(0)$ and $D(\frac{T}{2})$ are diagonal. We define the {\em relative winding number} $w(\gamma,\lambda,D)$ as follows.
$$
w(\gamma,\lambda,D)=\frac{1}{2\pi}\big[\arg(\gamma(\tfrac{T}{2}))-\arg(\gamma(0))\big]\in\frac{1}{2}\Z.
$$
In what follows we show some properties of the relative winding number corresponding to the properties of the winding number in Lemma \ref{Lemma:properties of wind}.
\begin{Lemma}
If $\beta$ and $\gamma$ are nonzero eigenfunctions of $A_D$ corresponding to the same eigenvalue $\lambda$, they are linearly dependent, i.e.,
$$
\beta(t)=\tau \gamma(t),\quad \tau\in\R\setminus\{0\}.
$$
\end{Lemma}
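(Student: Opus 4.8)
The statement is the boundary-condition analogue of the first item of Lemma \ref{Lemma:properties of wind}, and the natural strategy is to exploit the first-order nature of the eigenvalue equation $-J_0\dot\gamma(t)-D(t)\gamma(t)=\lambda\gamma(t)$. Rewriting this as $\dot\gamma(t)=J_0\big(D(t)+\lambda\one\big)\gamma(t)$, we see that any eigenfunction is a solution of a linear ODE, so it is determined by its value at $t=0$. The key extra input here, compared with the periodic case, is the boundary condition: eigenfunctions lie in $W^{1,2}_I$, so $\gamma(0)\in i\R$ and $\gamma(T/2)\in i\R$. Since the imaginary axis $i\R=\Fix(-I)$ is one-dimensional, the space of admissible initial conditions $\gamma(0)$ is already one-dimensional.

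First I would fix the eigenvalue $\lambda$ and consider the linear ODE $\dot\gamma=J_0(D(t)+\lambda\one)\gamma$ with the single constraint $\gamma(0)\in i\R$; this is a one-dimensional space of solutions, spanned by the solution with a chosen nonzero initial vector $v_0\in i\R$. Since any eigenfunction for $\lambda$ is such a solution and additionally satisfies the second boundary condition $\gamma(T/2)\in i\R$, it follows that the full eigenspace is contained in this one-dimensional solution space, hence is itself at most one-dimensional. Therefore if $\beta$ and $\gamma$ are both nonzero eigenfunctions for $\lambda$, they are proportional over $\C$, i.e.\ over $\R^2$ with a possibly matrix coefficient; but since both are genuine $\R^2$-valued solutions of the same real linear ODE with proportional initial data, the proportionality constant is a real scalar, giving $\beta(t)=\tau\gamma(t)$ with $\tau\in\R$, and $\tau\neq 0$ because $\beta\not\equiv 0$.

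One point that needs care: I must make sure I am comparing solutions of a \emph{real} linear system so that the proportionality factor is forced to be real rather than complex. The system $\dot\gamma=J_0(D(t)+\lambda\one)\gamma$ has real coefficient matrix acting on $\R^2$, and a solution with $\gamma(0)=0$ vanishes identically; so the evaluation-at-$0$ map from the solution space to $\R^2$ is injective, and two solutions agree up to a real scalar iff their initial values do. Both $\beta(0)$ and $\gamma(0)$ are nonzero vectors in the one-dimensional space $i\R$, hence $\beta(0)=\tau\gamma(0)$ for some $\tau\in\R\setminus\{0\}$; by uniqueness of solutions of the linear ODE, $\beta(t)=\tau\gamma(t)$ for all $t$. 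I should also note in passing (it was asserted just above the lemma, so I may cite it) that a nonzero eigenfunction never vanishes: this is exactly the same uniqueness argument, since $\gamma(t_0)=0$ would force $\gamma\equiv 0$.

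The argument is essentially immediate once the ODE viewpoint is adopted; the only genuine obstacle is the bookkeeping around the boundary conditions and the regularity, namely checking that an $L^2_I$ eigenfunction is in fact smooth (elliptic regularity for the first-order operator $A_D$, or simply bootstrapping from $\dot\gamma = J_0(D+\lambda)\gamma$) so that the pointwise ODE manipulation and the evaluation at the endpoints $0$ and $T/2$ are legitimate. This is routine and I would dispatch it in a sentence. No delicate estimate or index computation is required for this lemma.
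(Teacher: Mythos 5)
Your proof is correct and takes essentially the same route as the paper: both arguments reduce to the observation that an eigenfunction of $A_D$ solves the first-order linear ODE $\dot\gamma = J_0\big(D(t)+\lambda\one\big)\gamma$ with initial value constrained to a one-dimensional real line, so that after rescaling by a real $\tau$ with $\beta(0)=\tau\gamma(0)$, the difference $\beta-\tau\gamma$ is a solution vanishing at $t=0$ and hence vanishes identically. The paper records exactly this in two lines; the extra remarks you make (elliptic regularity to justify the pointwise ODE, and the earlier observation that nonzero eigenfunctions never vanish) are correct and match what the paper either states just above the lemma or leaves implicit.
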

\begin{proof}
We choose $\tau\in\R\setminus\{0\}$ so that $\beta(0)=\tau\gamma(0)$. Since $\kappa(t):=\beta(t)-\tau\gamma(t)$ ia also an eigenfunction with $\kappa(0)=0$, $\kappa\equiv 0$, and hence $\beta(t)=\tau \gamma(t)$.
\end{proof}

Thanks to the previous lemma, the relative winding number depends only on the eigenvalues of $A_D$ and thus we set
$$
w(\lambda,D):=w(\gamma,\lambda,D)
$$
where $\gamma$ is any eigenfunction of $A_D$ belonging to the eigenvalue $\lambda$. 

\begin{Lemma}
For each $k\in\Z$, there exists a unique eigenvalue $\lambda$ of $A_D$ satisfying 
$$
w(\lambda,D)=\frac{k}{2}.
$$
\end{Lemma}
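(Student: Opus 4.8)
The plan is to turn the eigenvalue equation into a rotation-number statement for a one-parameter family of symplectic paths and then run a Sturm-type comparison argument, in direct analogy with the winding-number theory behind Lemma~\ref{Lemma:properties of wind}.

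\emph{Step 1: reformulation.} The equation $A_D\gamma=\lambda\gamma$ is the linear ODE $\dot\gamma(t)=J_0\big(D(t)+\lambda\one\big)\gamma(t)$, and since $J_0$ times a symmetric matrix lies in $\mathfrak{sp}(\R^2)$, its fundamental solution $\Psi_\lambda\colon[0,T/2]\to\Sp(\R^2)$ with $\Psi_\lambda(0)=\one$ is a path of symplectic matrices, and every solution has the form $t\mapsto\Psi_\lambda(t)v_0$. Writing $\ell:=\Fix I$ for the common boundary line in $\R^2$ and fixing a unit vector $e$ spanning it, a function $\gamma$ is an eigenfunction of $A_D$ for $\lambda$ exactly when $\gamma(0)\in\ell\setminus\{0\}$ and $\Psi_\lambda(T/2)\gamma(0)\in\ell$; as $\dim\ell=1$ this is equivalent to $\Psi_\lambda(T/2)\ell=\ell$, and the derivative conditions in $\dom A_D$ hold automatically because $D(0),D(T/2)$ are diagonal. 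Let $\vartheta_\lambda\colon[0,T/2]\to\R$ be the continuous lift of the argument of $\Psi_\lambda(t)e$ normalized by $\vartheta_\lambda(0)=0$. Then $\lambda\in\sigma(A_D)$ if and only if $\vartheta_\lambda(T/2)\in\pi\Z$, and in that case $w(\lambda,D)=\vartheta_\lambda(T/2)/2\pi\in\tfrac12\Z$.

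\emph{Step 2: angle equation and monotonicity.} Putting $v_\lambda(t)=\Psi_\lambda(t)e\neq0$, the identity $\dot\vartheta=\langle J_0v,\dot v\rangle/|v|^2$ together with the orthogonality of $J_0$ gives
\[
\dot\vartheta_\lambda(t)=\frac{\langle v_\lambda(t),D(t)v_\lambda(t)\rangle}{|v_\lambda(t)|^2}+\lambda .
\]
The first summand depends only on the direction of $v_\lambda(t)$, hence only on $\vartheta_\lambda(t)$, so $\dot\vartheta_\lambda=f(t,\vartheta_\lambda)+\lambda$ with $|f|\le\max_t\|D(t)\|=:C$. Two consequences follow. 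First, \emph{strict monotonicity in $\lambda$}: for $\lambda_1<\lambda_2$ the difference $g:=\vartheta_{\lambda_2}-\vartheta_{\lambda_1}$ satisfies $g(0)=0$ and $g'(0)=\lambda_2-\lambda_1>0$, so $g>0$ near $0$; if $g$ had a first later zero $t_1$, then $g'(t_1)\le0$, while at the same time the $f$-terms cancel at $t_1$ (the lifted angles agree there) so $g'(t_1)=\lambda_2-\lambda_1>0$, a contradiction; hence $g>0$ on $(0,T/2]$, in particular $\vartheta_{\lambda_2}(T/2)>\vartheta_{\lambda_1}(T/2)$. Second, \emph{properness}: integrating the ODE gives $(\lambda-C)\tfrac{T}{2}\le\vartheta_\lambda(T/2)\le(\lambda+C)\tfrac{T}{2}$, so $\vartheta_\lambda(T/2)\to\pm\infty$ as $\lambda\to\pm\infty$. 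Since $\Psi_\lambda$, and therefore $\vartheta_\lambda(T/2)$, depends continuously on $\lambda$, the map $\lambda\mapsto\vartheta_\lambda(T/2)$ is a strictly increasing homeomorphism of $\R$.

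\emph{Step 3: conclusion.} Given $k\in\Z$, there is a unique $\lambda_k\in\R$ with $\vartheta_{\lambda_k}(T/2)=\pi k$; by Step~1 this $\lambda_k$ is an eigenvalue of $A_D$, it satisfies $w(\lambda_k,D)=k/2$, the family $\{\lambda_k\}_{k\in\Z}$ exhausts $\sigma(A_D)$, and distinct values of $k$ give distinct eigenvalues. This is exactly the assertion. The main obstacle is the monotonicity in Step~2: it is the only point that genuinely needs the Sturm oscillation comparison argument rather than algebra, while the symplecticity of $\Psi_\lambda$, the angle ODE, the a priori bound, and the continuous dependence on $\lambda$ are routine. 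Conceptually the statement is the Lagrangian half-period counterpart of Lemma~\ref{Lemma:properties of wind}(2): ``two eigenvalues per integer winding'' is replaced by ``one eigenvalue per half-integer winding'', reflecting both the halving of the period and the loss of one dimension imposed by the Lagrangian boundary condition.
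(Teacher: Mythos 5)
Your proof is correct, and it takes a genuinely different route from the paper. The paper's own proof is a one-sentence reduction: it cites Kato's perturbation theory and refers to the analogous Lemma~3.6 in \cite{HWZ95b}, the idea being that $A_D$ is a bounded perturbation of $-J_0\tfrac{d}{dt}$, whose spectrum on the relevant space is explicit, and eigenvalues (together with the winding numbers of their eigenfunctions) vary continuously without collision under the deformation $D_s=sD$. You instead give a self-contained Sturm--oscillation argument: rewrite the eigenvalue problem as the Lagrangian boundary value problem $\Psi_\lambda(T/2)\ell=\ell$ for the symplectic fundamental solution, introduce the rotation function $\vartheta_\lambda$, derive the angle ODE $\dot\vartheta_\lambda=f(t,\vartheta_\lambda)+\lambda$ with $|f|\le C$, and show $\lambda\mapsto\vartheta_\lambda(T/2)$ is a strictly increasing homeomorphism of $\R$ so that each level set $\{\vartheta_\lambda(T/2)=\pi k\}$ is a single point. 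Both approaches work; yours avoids the abstract functional-analytic machinery and makes explicit where the ``one eigenvalue per half-integer winding'' count comes from (a single Lagrangian line has a one-dimensional angle coordinate, whereas the periodic problem in \cite{HWZ95b} tracks a full symplectic matrix and gets two per integer), while the paper's is shorter because it leans on \cite{HWZ95b} for all the analytic work.

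One minor point worth making explicit if you were to flesh this out: the symplecticity of $\Psi_\lambda$ is not really what you need---any invertible fundamental solution guarantees $v_\lambda(t)\neq 0$ so that the lifted angle is well defined---and the identity $\langle J_0v,\dot v\rangle=\langle v,(D+\lambda)v\rangle$ uses only that $J_0$ is orthogonal. Also, the derivative boundary conditions in $\dom A_D$ are indeed automatic as you say, because $D(0)$ and $D(T/2)$ are diagonal and the ODE forces $\dot\gamma=J_0(D+\lambda)\gamma$; this is exactly where the hypothesis that $D$ is diagonal at the endpoints enters, and it is good that you flagged it.
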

\begin{proof}
This follows from Kato's perturbation theory \cite{Kat76} and the fact that $A_D$ is a bounded perturbation of the operator $-J_0\frac{d}{dt}$, cf. \cite[Lemma 3.6]{HWZ95b}.
\end{proof}
According to the lemma, we can arrange the spectrum of $A_D$ by
$$
\sigma(A_D)=\{\lambda_k(D)\}_{k\in\Z}
$$
where $\lambda_k(D)$ is characterized by
\beq\label{eq:arrangement of eigenvalues II}
w(\lambda_k,D)=\frac{k}{2}.
\eeq
Note that $\lambda_k<\lambda_{k+1}$, $k\in\Z$, cf. \cite[Lemma 3.6]{HWZ95b}. As before we abbreviate the maximal relative winding number  occurring for negative eigenvalues of $A_D$ by
$$
\alpha_I(D)=\max\{w(\lambda,D)\,|\,\lambda\in\sigma(A_D)\cap(-\infty,0)\}.
$$
We define a relative Maslov-type index by
$$
\mu_I(D):=2\alpha_I(D)+\frac{1}{2}\in\Z+\frac{1}{2}.
$$
Moreover, we can rewrite as 
$$
\mu_I(D)=\max\{k\in\Z\,|\,\lambda_k(D)<0\}+\frac{1}{2}.
$$

For $\Psi:\R\to\Sp(\R^2)$ with $\Psi(0)=\one$, $\Psi(t+T)=\Psi(t)\Psi(T)$, and $\Psi(-t)=I\Psi(t)I$, the associated loop of symmetric matrices
$$
D_\Psi(t):=-J_0\dot\Psi(t)\Psi(t)^{-1},\quad D_\Psi(t+T)=D_\Psi(t),\quad t\in\R.
$$
additionally satisfies
$$
D_\Psi(-t)=I D(t) I,\quad t\in\R.
$$
In particular, $D(\frac{mT}{2})$, $m\in\Z$ are diagonal matrices. $D_\Psi$ is the same as $S_\Psi$ by definition but we often use $D_\Psi$ to emphasize that $D_\Psi(\frac{mT}{2})$s are diagonal. Checking the characterizing axioms of the Robbin-Salamon index \cite[Theorem 4.1]{RS93}, one can show that 
$$
\mu_I(D_\Psi|_{[0,\frac{T}{2}]})=\mu_\RS(\Psi|_{[0,\frac{T}{2}]}\R,\R)
$$
provided $\Psi(\frac{T}{2})\R\cap \R=\{0\}$, see \eqref{eq:nondegeneracy}. Note that $v(t)\in\ker A_{D_\Psi}$ if and only if $v(t)=\Psi(t)v_0$ with $v_0\in \R\cap\Psi(\frac{T}{2})^{-1}\R$.

On the other hand, one can associate the Conley-Zehnder index $\mu_\CZ(\Psi|_{[0,T]})$ for a given path $\Psi$. To figure out the difference between $\mu_\CZ(\Psi|_{[0,T]})$ and $\mu_\RS(\Psi|_{[0,\frac{T}{2}]}\R,\R))$, we need another auxiliary index. Consider
\bean
W^{1,2}_{-I}([0,\tfrac{T}{2}],\R^2)&=\big\{v\in W^{1,2}([0,\tfrac{T}{2}],\R^2)\,\big|\, v(0),\,v(\tfrac{T}{2})\in J_0\R\big\}.
\eea
Let $D_\Psi(t)$ be as above, i.e. a path of symmetric matrices associated to $\Psi:\R\to\Sp(\R^2)$ with $\Psi(0)=\one$, $\Psi(t+T)=\Psi(t)\Psi(T)$, and $\Psi(-t)=I\Psi(t) I$. We can define an unbounded self-adjoint operator 
$$
\bar A_{D_{\Psi}|_{[0,\frac{T}{2}]}}: L^2([0,\tfrac{T}{2}],\R^2)\to L^2([0,\tfrac{T}{2}],\R^2),\quad v\mapsto -J_0\dot v -D_\Psi v
$$ 
with $\dom \bar A_{D_{\Psi}|_{[0,\frac{T}{2}]}}=W^{1,2}_{-I}([0,\frac{T}{2}],\R^2)$. As above we are able to define a relative winding number $w(\bar\lambda,D_{\Psi}|_{[0,\frac{T}{2}]})\in\frac{1}{2}\Z$ for each eigenvalue $\bar\lambda\in\R$ of $\bar A_{D_{\Psi}|_{[0,\frac{T}{2}]}}$. Also there is a unique eigenvalue $\bar\lambda_k$ of $\bar A_{D_{\Psi}|_{[0,\frac{T}{2}]}}$ with 
\beq\label{eq:arrangement of eigenvalues III}
w(\bar\lambda_k,D_{\Psi}|_{[0,\frac{T}{2}]})=\frac{k}{2},\quad k\in\Z.
\eeq
We define 
$$
\mu_{-I}(D_{\Psi}|_{[0,\frac{T}{2}]}):=\max\{k\in\Z\,|\,\bar\lambda_k(D_{\Psi}|_{[0,\frac{T}{2}]})<0\}+\frac{1}{2}.
$$
One can check again that
$$
\mu_{-I}(D_{\Psi}|_{[0,\frac{T}{2}]})=\mu_{\RS}(\Psi|_{[0,\frac{T}{2}]} J_0\R,J_0\R).
$$

\subsection{Symmetric periodic orbits and indices}

Although the following proposition is stated for dimension 2 since we use the indices $\mu_{-I}$, $\mu_I$, and $\mu$ in the proof, it holds in higher dimensions as well. Long, Zhang, and Zhu \cite{LZZ06} showed this for different (but equivalent up to constant) Maslov-type indices for every dimension. 

\begin{Prop} \label{prop:index relation}
If $\Psi:\R\to\Sp(\R^2)$ with $\Psi(0)=\one$, $\Psi(t+T)=\Psi(t)\Psi(T)$, and $\Psi(-t)=I\Psi(t) I$,
$$
\mu_{\CZ}(\Psi|_{[0,T]})=\mu_{\RS}(\Psi|_{[0,\frac{T}{2}]} \R,\R)+\mu_{\RS}(\Psi|_{[0,\frac{T}{2}]} J_0\R,J_0\R).
$$
\end{Prop}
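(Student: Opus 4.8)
The plan is to reduce the identity between Maslov-type indices to a purely spectral statement about asymptotic operators, exactly in the spirit of the winding-number reformulations recalled above, and then to solve that by a symmetry (reflection) argument. Abbreviate $S=S_\Psi=D_\Psi$; the symmetry $\Psi(-t)=I\Psi(t)I$ forces $S(-t)=IS(t)I$. Using the three identities already established in this section, namely $\mu_\CZ(\Psi|_{[0,T]})=\mu(S|_{[0,T]})$, $\mu_\RS(\Psi|_{[0,T/2]}\R,\R)=\mu_I(D_\Psi|_{[0,T/2]})$ and $\mu_\RS(\Psi|_{[0,T/2]}i\R,i\R)=\mu_{-I}(D_\Psi|_{[0,T/2]})$ — all under the relevant nondegeneracy assumptions, which the symmetry renders mutually equivalent to $\det(\Psi(T)-\one)\neq0$ — the proposition becomes equivalent to the spectral identity
$$
\mu(S)=\mu_I(D)+\mu_{-I}(D),\qquad D:=D_\Psi|_{[0,T/2]}.
$$
I will prove this by comparing the spectrum of $A_S$ on $L^2(S^1_T,\R^2)$ with those of $A_D$ on $L^2_I([0,T/2],\R^2)$ and $\bar A_D$ on $L^2_{-I}([0,T/2],\R^2)$.

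Next I would introduce the reflection $\sigma\colon L^2(S^1_T,\R^2)\to L^2(S^1_T,\R^2)$, $(\sigma v)(t)=Iv(-t)$. Using $J_0I=-IJ_0$ and $S(-t)=IS(t)I$ one checks $\sigma A_S=A_S\sigma$ and $\sigma^2=\id$, so $A_S$ respects the splitting $L^2(S^1_T,\R^2)=H_+\oplus H_-$ into the $(\pm1)$-eigenspaces of $\sigma$. Restriction to $[0,T/2]$ identifies $H_+$ with $L^2_I([0,T/2],\R^2)$ and $H_-$ with $L^2_{-I}([0,T/2],\R^2)$: for $v\in H_+$ the relation $v(t)=Iv(-t)=Iv(T-t)$ forces $v(0),v(T/2)\in\Fix I=\R$ and, on the domain, $\dot v(0),\dot v(T/2)\in\Fix(-I)=i\R$, while conversely every such path on $[0,T/2]$ has a unique $\sigma$-symmetric extension (the case $H_-$ is identical with $\R$ and $i\R$ interchanged). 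Under these identifications $A_S|_{H_+}=A_D$ and $A_S|_{H_-}=\bar A_D$. Since $A_D$ and $\bar A_D$ have only simple eigenvalues, no two-dimensional eigenspace of $A_S$ can lie entirely in $H_+$ or entirely in $H_-$; hence each eigenspace of $A_S$ contributes at most one dimension to each of $H_\pm$, and counting eigenvalues of $A_S$ with multiplicity, $\sigma(A_S)$ is the disjoint union (with multiplicity, a coincidence producing a double eigenvalue of $A_S$) of $\sigma(A_D)$ and $\sigma(\bar A_D)$.

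I would then match winding numbers. For an eigenfunction $\gamma\in H_\pm$ of $A_S$, splitting $[0,T]=[0,T/2]\cup[T/2,T]$ and using that $I$ (resp.\ $-I$) reverses orientation and that $\gamma(0),\gamma(T/2)$ lie on the real (resp.\ imaginary) axis, a careful argument-lift shows that the total winding over $[0,T]$ is twice the relative winding over $[0,T/2]$, i.e.\ $w(\gamma,\lambda,S)=2\,w(\gamma|_{[0,T/2]},\lambda,D)$. Combined with $w(\lambda_k,D)=k/2=w(\bar\lambda_k,D)$ and with the fact that $A_S$ has exactly two eigenvalues of winding $i$ for each $i\in\Z$, this yields $\{\lambda_i^0(S),\lambda_i^1(S)\}=\{\lambda_i(D),\bar\lambda_i(D)\}$ as a multiset. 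Writing $k_+=\max\{k:\lambda_k(D)<0\}$ and $k_-=\max\{k:\bar\lambda_k(D)<0\}$ (so $\mu_I(D)=k_++\tfrac12$, $\mu_{-I}(D)=k_-+\tfrac12$), one gets $\lambda_i^1(S)<0\iff i\le\min(k_+,k_-)$ and $\lambda_i^0(S)<0\iff i\le\max(k_+,k_-)$, hence $\mu(S)=\max\{2\max(k_+,k_-),\,2\min(k_+,k_-)+1\}$. Finally the monotonicity of $w(\cdot,S)$ — winding-$i$ eigenvalues of $A_S$ strictly precede winding-$(i{+}1)$ ones, i.e.\ $\lambda_i^1(S)<\lambda_{i+1}^0(S)$ — translates into the interlacing $\bar\lambda_i(D)<\lambda_{i+1}(D)$ and $\lambda_i(D)<\bar\lambda_{i+1}(D)$, which forces $|k_+-k_-|\le1$; in that range one checks directly that $\max\{2\max(k_+,k_-),\,2\min(k_+,k_-)+1\}=k_++k_-+1$, giving $\mu(S)=\mu_I(D)+\mu_{-I}(D)$.

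I expect the main obstacles to be the two bookkeeping points rather than any hard analysis: (i) verifying that $\sigma$ really intertwines $A_S$ with the two boundary-value operators, matching domains (the $i\R$-conditions on the derivatives), and (ii) the argument-tracking in $w(\gamma,\lambda,S)=2\,w(\gamma|_{[0,T/2]},\lambda,D)$, where the orientation-reversal of $I$ must be handled so that the two half-windings add rather than cancel. The interlacing step ruling out $|k_+-k_-|\ge2$ is the conceptual heart that makes the clean additive formula emerge.
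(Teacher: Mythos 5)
Your proof is correct and rests on the same key idea as the paper's: decompose $L^2(S^1_T,\R^2)$ into the $\pm1$-eigenspaces of the reflection $v(t)\mapsto Iv(-t)$, identify these (by restriction to $[0,T/2]$) with $L^2_I$ and $L^2_{-I}$, and hence split $A_S=A_D\oplus\bar A_D$. Where you diverge is in how the spectral matching is established and in the final bookkeeping. The paper obtains the multiset equality $\{\lambda_i^0(S),\lambda_i^1(S)\}=\{\lambda_i(D),\bar\lambda_i(D)\}$ by a homotopy argument, deforming $S$ to $0$ and invoking Kato's perturbation theory together with the fact that eigenvalue branches with different winding cannot cross; you instead prove the crisp pointwise identity $w(\gamma,\lambda,S)=2\,w(\gamma|_{[0,T/2]},\lambda,D)$ for a $\sigma$-symmetric eigenfunction $\gamma$ by a direct argument-lift (using $\theta(t)+\theta(T-t)\equiv\text{const}$), and then conclude from the counting $\#\{w=i\}=2$. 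Both routes are valid, and yours is arguably cleaner since it avoids appealing to perturbation theory. You also make explicit the interlacing $\bar\lambda_i(D)<\lambda_{i+1}(D)$ and $\lambda_i(D)<\bar\lambda_{i+1}(D)$, needed to rule out $|k_+-k_-|\geq2$ so that $\max\{2\max(k_\pm),\,2\min(k_\pm)+1\}$ collapses to $k_++k_-+1$; the paper passes over this step silently, so your treatment is actually a bit more careful at the finish.
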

\begin{proof}
It suffices to show that 
$$
\mu(S_{\Psi}|_{[0,T]})=\mu_I(S_{\Psi}|_{[0,\frac{T}{2}]})+\mu_{-I}(S_{\Psi}|_{[0,\frac{T}{2}]})
$$
where $S_\Psi:\R\to\mathrm{Sym}(\R^2)$ is a $T$-periodic loop of symmetric matrices associated to $\Psi$ as above. We consider the orthogonal decomposition
$$
L^2(S^1_T,\R^2)=L_1\oplus L_2,\quad W^{1,2}(S^1_T,\R^2)=W_1\oplus W_2
$$
where
$$
L_1:=\{v\in L^2(S^1_T,\R^2)\,|\,v(-t)=Iv(t)\},\quad
L_2:=\{v\in L^2(S^1_T,\R^2)\,|\,v(-t)=-Iv(t)\}.
$$
and 
$$
W_1:=W^{1,2}(S^1_T,\R)\cap L_1,\quad
W_2:=W^{1,2}(S^1_T,\R)\cap L_2.
$$
Indeed for any $v=(v_1,v_2)\in L^2(S^1,\R^2)$, the above decomposition is given by $v=\Upsilon_1+\Upsilon_2$ where 
$$
\Upsilon_1(t):=\bigg(\frac{v_1(t)+v_1(-t)}{2},\frac{v_2(t)-v_2(-t)}{2}\bigg)\in L_1
$$
and 
$$
\Upsilon_2(t):=\bigg(\frac{v_1(t)-v_1(-t)}{2},\frac{v_2(t)+v_2(-t)}{2}\bigg)\in L_2.
$$
We choose a homotopy $\wt S:[0,1]\x\R\to \mathrm{Sym}(\R^2)$ such that 
$$
\wt S(s,t+T)=\wt S(s,t),\quad I\wt S(s,t) I=\wt S(s,-t),\quad  \wt S(0,t)\equiv0,\quad  \wt S(1,t)=S_\Psi(t).
$$
Consider a path of operators $A_{\wt S}$ on $L^2(S^1_T,\R^2)$. 
Since $S_\Psi(s,-t)=I S_\Psi(s,t) I$,
$$
A_{\wt S}|_{L_1}:W_1\subset L_1\to L_1,\quad A_{\wt S}|_{L_2}:W_1\subset L_2\to L_2
$$
and thus 
$$
A_{\wt S}=A_{\wt S}|_{L_1}\oplus A_{\wt S}|_{L_2}.
$$
In consequence, we have
$$
\sigma(A_{\wt S})=\sigma(A_{\wt S}|_{L_1})\cup\sigma(A_{\wt S}|_{L_2}).
$$
Observe that 
$$
W^{1,2}_I([0,\tfrac{T}{2}],\R^2)=\{v|_{[0,\frac{T}{2}]}\,|\,v\in W_1\},\quad W^{1,2}_{-I}([0,\tfrac{T}{2}],\R^2)=\{v|_{[0,\frac{T}{2}]}\,|\,v\in W_2\}.
$$
A path of operators $A_{\wt S(s,t)}|_{L_1}$ can be seen as being defined on $W^{1,2}_I([0,\frac{T}{2}],\R^2)$ by restricting domain of paths. This restriction does not change the spectrum of $A_{\wt S(s,t)}|_{L_1}$. The same holds for $A_{\wt S(s,t)}|_{L_2}$ and $W^{1,2}_{-I}([0,\frac{T}{2}],\R^2)$.
We note that when $s=0$, i.e. $A_0=-J_0\frac{\p}{\p t}$,
$$
\lambda_i(A_0|_{W^{1,2}_I([0,\frac{T}{2}],\R^2)})=\lambda_{i}^0(A_0),\quad  \lambda_i(A_0|_{W^{1,2}_{-I}([0,\frac{T}{2}],\R^2)})=\lambda^1_{i}(A_0).
$$
for $i\in\Z$ where $\lambda_i$, $\lambda_i^0$, and $\lambda_i^j$ are defined in \eqref{eq:arrangement of eigenvalues I},  \eqref{eq:arrangement of eigenvalues II}, and \eqref{eq:arrangement of eigenvalues III} respectively. Here $\lambda^0_{i}(A_0)=\lambda^1_{i}(A_0)$. Due to Kato's perturbation theory of the eigenvalues of self-adjoint operators in \cite{Kat76} (or see \cite{HWZ95b}), the path of eigenvalues $\lambda_{i}^j(A_{\wt S})$ is continuous in $s\in[0,T]$ and $\lambda_{i}^j(A_{\wt S})$ and $\lambda_{i'}^{j'}(A_{\wt S})$ meet only if $i=i'$. Thus,
$$
\big\{\lambda_{i}^0(A_{\wt S}),\lambda_{i}^1(A_{\wt S})\big\}=\big\{\lambda_i(A_{\wt S}|_{W_I^{1,2}([0,\frac{T}{2}],\R^2)}),\lambda_i(A_{\wt S}|_{W_{-I}^{1,2}([0,\frac{T}{2}],\R^2)})\big\}
$$
for $i\in\Z$ and $s\in[0,T]$. This yields that
$$
\max\{2i+j\,|\,\lambda_i^j(A_{\wt S})<0\}=\max\{i\,|\,\lambda_i(A_{\wt S}|_{W_I^{1,2}})<0\}+\max\{i\,|\,\lambda_i(A_{\wt S}|_{W^{1,2}_{-I}})<0\}+1
$$
which in turn shows
$$
\mu(S_{\Psi}|_{[0,T]})=\mu_I(S_{\Psi}|_{[0,\frac{T}{2}]})+\mu_{-I}(S_{\Psi}|_{[0,\frac{T}{2}]}).
$$
\end{proof}

In higher dimensions, we denote by
$$
I_{\R^{2n}}=\left(\begin{array}{cc} \one_{\R^n} & 0\\
0 & -\one_{\R^n}  \end{array}\right):\R^{2n}\to\R^{2n},\quad \R^n=\Fix I_{\R^{2n}},\quad   J_0\R^n=\Fix (-I_{\R^{2n}}).
$$
We say that a path $\Psi:[0,T]\to\Sp(\R^{2n})$ with $\Psi(0)=\one_{\R^{2n}}$ is {\bf nondegenerate} if 
$$
\dim\ker\big(\Psi(T)-\one_{\R^{2n}}\big)=0,
$$
and that a pair of a path of Lagrangians and a single Lagrangian $(\Psi(t) V,V)$ for $\Psi:[0,T]\to\Sp(\R^{2n})$, $\Psi(0)=\one_{\R^{2n}}$ and $V\in\mathrm{Lag}(\R^{2n})$ is {\bf nondegenerate} if 
\beq\label{eq:nondegeneracy}
\dim\big(\Psi(T) V\cap V\big)=0.
\eeq
\begin{Prop}
Suppose that $\Psi:\R\to\Sp(\R^2)$ satisfies $\Psi(0)=\one_{\R^{2n}}$, $\Psi(t+T)=\Psi(t)\Psi(T)$, and $\Psi(-t)=I_{\R^{2n}}\Psi(t) I_{\R^{2n}}$. A path $\Psi|_{[0,T]}$ is nondegenerate if and only if both $(\Psi|_{[0,\frac{T}{2}]} \R^n,\R^n)$ and $(\Psi|_{[0,\frac{T}{2}]}J_0\R^n,J_0\R^n)$ are nondegenerate.
\end{Prop}
\begin{proof}
We write 
$$
\Psi\big(\tfrac{T}{2}\big)=\left(\begin{array}{cc} A & B\\
C & D  \end{array}\right),\quad 
v=\left(\begin{array}{c} v_1 \\ v_2
 \end{array}\right)\in\R^{2n}
$$
with respect to the decomposition $\R^{2n}=\R^n\oplus J_0\R^n$. Suppose that $\det C=0$. We claim that if $v_1\in\ker C$, $(v_1,0)\in\ker(\Psi(T)-\one_{\R^{2n}})$. Indeed, the claim follows from
$$
I_{\R^{2n}}\Psi\big(\tfrac{T}{2}\big)\left(\begin{array}{c} v_1 \\ 0
 \end{array}\right)=\left(\begin{array}{c} A v_1 \\ 0
 \end{array}\right)= \Psi\big(\tfrac{T}{2}\big) I_{\R^{2n}}\left(\begin{array}{c} v_1 \\ 0
 \end{array}\right)
$$
since $\Psi(T)=I_{\R^{2n}} \Psi(\frac{T}{2})^{-1} I_{\R^{2n}}\Psi(\frac{T}{2})$. The case $\det B=0$ follows in the same manner. 

To show the converse, we assume that $\Psi(T)v=v$ for some $v\in\R^{2n}$. Then since
$$
\left(\begin{array}{cc} Av_1-Bv_2\\
Cv_1-Dv_2  \end{array}\right)=\Psi\big(\tfrac{T}{2}\big) I_{\R^{2n}} \left(\begin{array}{c} v_1\\
v_2  \end{array}\right)=I_{\R^{2n}}\Psi\big(\tfrac{T}{2}\big) \left(\begin{array}{c} v_1\\
v_2  \end{array}\right)=\left(\begin{array}{cc} Av_1+Bv_2\\
-Cv_1-Dv_2  \end{array}\right),
$$
we have $Bv_2=Cv_1=0$. Therefore $\Psi(\frac{T}{2})(v_1,0)=(Av_1,0)$ and $\Psi(\frac{T}{2})(0,v_2)=(0,Dv_2)$.
\end{proof}

The following propositions are also proved in \cite{LZZ06} for their Maslov-type indices and there are analogous statements in higher dimensions.

\begin{Prop}\label{prop:Hormander index}
Suppose that  $\Psi:\R\to\Sp(\R^2)$ is nondegenerate and satisfies $\Psi(0)=\one_{\R^{2n}}$, $\Psi(t+T)=\Psi(t)\Psi(T)$, and $\Psi(-t)=I_{\R^{2n}}\Psi(t) I_{\R^{2n}}$. Then we have
$$
\big|\mu_{\CZ}(\Psi|_{[0,T]})-2\mu_{\RS}(\Psi|_{[0,\frac{T}{2}]}\R,\R)\big|\leq 1.
$$
In consequence, if $\mu_{\CZ}(\Psi|_{[0,T]})\geq3$, 
$$
\mu_{\RS}(\Psi|_{[0,\frac{T}{2}]}\R,\R)\geq\frac{3}{2}.
$$
\end{Prop}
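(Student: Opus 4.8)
The plan is to combine Proposition~\ref{prop:index relation} with the winding-number descriptions of the indices $\mu$, $\mu_I$, $\mu_{-I}$. Write $S:=S_\Psi=D_\Psi$ for the associated $T$-periodic loop of symmetric matrices; the hypothesis $\Psi(-t)=I\Psi(t)I$ gives $IS(t)I=S(-t)$, so the whole structure from the proof of Proposition~\ref{prop:index relation} applies. Nondegeneracy of $\Psi$ makes both $(\Psi|_{[0,T/2]}\R,\R)$ and $(\Psi|_{[0,T/2]}i\R,i\R)$ nondegenerate by the preceding proposition, so $\mu_\RS(\Psi|_{[0,T/2]}\R,\R)=\mu_I(S|_{[0,T/2]})$ and $\mu_\RS(\Psi|_{[0,T/2]}i\R,i\R)=\mu_{-I}(S|_{[0,T/2]})$. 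Hence Proposition~\ref{prop:index relation} yields
$$
\mu_{\CZ}(\Psi|_{[0,T]})-2\mu_\RS(\Psi|_{[0,T/2]}\R,\R)=\mu_{-I}(S|_{[0,T/2]})-\mu_I(S|_{[0,T/2]}).
$$
Let $\alpha_I$ be the maximal relative winding number among negative eigenvalues of the boundary-value operator $A_{S|_{[0,T/2]}}$ on $L^2_I$, and $\bar\alpha_I$ the same for $\bar A_{S|_{[0,T/2]}}$ on $L^2_{-I}$; put $m:=2\alpha_I$ and $\bar m:=2\bar\alpha_I$, which are integers. By definition $\mu_I(S|_{[0,T/2]})=m+\frac12$ and $\mu_{-I}(S|_{[0,T/2]})=\bar m+\frac12$, so the quantity to be estimated is just $\bar m-m$, and it suffices to prove $|m-\bar m|\le1$.

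The key input is the identity $\alpha(S|_{[0,T]})=\max\{m,\bar m\}$. To obtain it, recall from the proof of Proposition~\ref{prop:index relation} the splitting $A_S=A_S|_{L_1}\oplus A_S|_{L_2}$ of the full periodic operator, together with the fact that, after the restriction-of-domain identifications $L_1\leftrightarrow L^2_I([0,T/2],\R^2)$ and $L_2\leftrightarrow L^2_{-I}([0,T/2],\R^2)$, for each $i\in\Z$ the pair $\{\lambda^0_i(A_S),\lambda^1_i(A_S)\}$ of eigenvalues of $A_S$ with winding number $i$ equals $\{\lambda_i(A_S|_{L_1}),\lambda_i(A_S|_{L_2})\}$, the two boundary-value eigenvalues of relative winding $\frac i2$. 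Since $\lambda^0_i\le\lambda^1_i$, the largest winding number carried by a negative eigenvalue of $A_S$ is $\max\{i:\lambda^0_i(A_S)<0\}=\max\{i:\lambda_i(A_S|_{L_1})<0\ \text{or}\ \lambda_i(A_S|_{L_2})<0\}$, and by the definitions of $\alpha_I,\bar\alpha_I$ this equals $\max\{m,\bar m\}$.

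Now compare the two formulas for $\mu_{\CZ}(\Psi|_{[0,T]})=\mu(S|_{[0,T]})$: by definition of $\mu$ we have $\mu(S|_{[0,T]})=2\alpha(S|_{[0,T]})+p(S|_{[0,T]})$ with $p(S|_{[0,T]})\in\{0,1\}$, while Proposition~\ref{prop:index relation} gives $\mu(S|_{[0,T]})=\mu_I(S|_{[0,T/2]})+\mu_{-I}(S|_{[0,T/2]})=m+\bar m+1$. Subtracting and inserting $\alpha(S|_{[0,T]})=\max\{m,\bar m\}$ gives
$$
p(S|_{[0,T]})=m+\bar m+1-2\max\{m,\bar m\}=1-|m-\bar m|,
$$
and since $p(S|_{[0,T]})\ge0$ with $m,\bar m\in\Z$ this forces $|m-\bar m|\le1$, which is the first claim. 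For the consequence, $\mu_\RS(\Psi|_{[0,T/2]}\R,\R)\in\Z+\frac12$ because $\Psi(0)\R=\R$ and $\Psi(T/2)\R\cap\R=\{0\}$ (nondegeneracy); hence $\mu_{\CZ}(\Psi|_{[0,T]})\ge3$ forces $2\mu_\RS(\Psi|_{[0,T/2]}\R,\R)\ge2$, i.e. $\mu_\RS(\Psi|_{[0,T/2]}\R,\R)\ge1$, and being a half-integer it is at least $\frac32$.

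The step I expect to be the main obstacle is making the bookkeeping of the second paragraph airtight: pinning down the correspondence between the relative-winding labeling of the eigenvalues of the boundary-value operators on $L^2_I$ and $L^2_{-I}$ and the integer-winding labeling $\lambda^j_i$ of the full periodic operator $A_S$, with the correct normalization of $\arg$ at the endpoints, where the eigenfunctions lie on $\R$ respectively $i\R$. Everything else is formal manipulation of index formulas already established. An alternative route that avoids the identity $p=1-|m-\bar m|$ is to analyze directly the cases at the largest winding index $i_0$ with $\lambda^0_{i_0}(A_S)<0$, according to whether $\lambda^1_{i_0}(A_S)$ is negative or positive and, in the positive case, to which of $L_1$, $L_2$ the negative eigenvalue $\lambda^0_{i_0}(A_S)$ belongs; this yields $|m-\bar m|\in\{0,1\}$ by direct inspection.
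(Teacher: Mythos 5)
Your proof is correct and is essentially the paper's argument carried out in full. The paper's own proof is a one-liner: it cites Proposition~\ref{prop:index relation} and then asserts that the H\"ormander-type difference $\mu_{\RS}(\Psi|_{[0,T/2]}\R,\R)-\mu_{\RS}(\Psi|_{[0,T/2]} i\R,i\R)$ lies in $\{-1,0,1\}$, claiming this ``immediately follows from definitions.'' You have supplied exactly the content that remark is gesturing at: using the eigenvalue splitting $\{\lambda_i^0(A_S),\lambda_i^1(A_S)\}=\{\lambda_i(A_S|_{L^2_I}),\lambda_i(A_S|_{L^2_{-I}})\}$ from the proof of Proposition~\ref{prop:index relation}, together with $\lambda_i^0\le\lambda_i^1$, you get $\alpha(S|_{[0,T]})=\max\{m,\bar m\}$, and then the identity $p(S|_{[0,T]})=1-|m-\bar m|$ combined with $p\in\{0,1\}$ yields the bound; the consequence follows as in the paper from $\mu_\RS\in\Z+\tfrac12$ under nondegeneracy. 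This is a genuine clarification rather than a different route, and it is worth having written down explicitly: the quantity called the H\"ormander index here is not literally the standard one (the path and the reference Lagrangian change simultaneously), so the bound is not quite a quotable textbook fact.
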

\begin{proof}
The first assertion follows from Proposition \ref{prop:index relation} together with the fact that the H\"ormander index
$$
\mu_H(\Psi|_{[0,\frac{T}{2}]};\R,J_0\R):=\mu_{\RS}(\Psi|_{[0,\frac{T}{2}]}\R,\R)-\mu_{\RS}(\Psi|_{[0,\frac{T}{2}]} J_0\R,J_0\R)
$$
takes values in $\{-1,0,1\}$ which immediately follows from definitions. Then the fact that $\mu_{\RS}(\Psi|_{[0,\frac{T}{2}]}\R,\R)\in\Z+\frac{1}{2}$ under nondegeneracy shows the last inequality.
\end{proof}

\begin{Prop}\label{Prop:RS-index increases} 
Suppose that  $\Psi:\R\to\Sp(\R^2)$ satisfies $\Psi(0)=\one$, $\Psi(t+T)=\Psi(t)\Psi(T)$,  and $\Psi(-t)=I\Psi(t) I$.
If $\mu_{\RS}(\Psi|_{[0,\frac{T}{2}]}\R,\R)\geq\frac{3}{2}$, 
$$
\mu_{\RS}(\Psi|_{[0,\frac{mT}{2}]}\R,\R)\geq \frac{2m+1}{2},\quad m\in\N.
$$
Moreover if $\mu_{\RS}(\Psi|_{[0,\frac{T}{2}]}\R,\R)\geq\frac{1}{2}$, 
$$
\mu_{\RS}(\Psi|_{[0,\frac{mT}{2}]}\R,\R)\geq \frac{1}{2},\quad m\in\N
$$
and if  $\mu_{\RS}(\Psi|_{[0,\frac{T}{2}]}\R,\R)<\frac{1}{2}$, 
$$
\mu_{\RS}(\Psi|_{[0,\frac{mT}{2}]}\R,\R)< \frac{1}{2},\quad m\in\N.
$$
\end{Prop}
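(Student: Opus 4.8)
\textit{Plan.} The idea is to reduce all three assertions to a single inequality about the winding angle of the Lagrangian path $t\mapsto\Psi(t)\R$, and then to run an induction that exploits the reversibility $\Psi(-t)=I\Psi(t)I$.

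\textit{Step 1 (reduction to an angle).} For $t\in\R$ let $\theta(t)\in\R$ be the continuous function, normalized by $\theta(0)=0$, whose value modulo $\pi$ is the angle of the line $\Psi(t)\R\in\mathrm{Lag}(\R^2)$. Using the description of $\mu_{\RS}$ through relative winding numbers from Section 3 (the relative winding number of an eigenfunction is $\tfrac{1}{2\pi}$ times the increment of such an angle), or, more elementarily, the Homotopy property of $\mu_{\RS}$ together with the model computation for the rotation path $\Psi(t)=e^{ctJ_0}$, one gets for every $\tau>0$ and $k\in\Z$
\[
\mu_{\RS}(\Psi|_{[0,\tau]}\R,\R)\geq k+\tfrac12 \iff \theta(\tau)>k\pi, \qquad \mu_{\RS}(\Psi|_{[0,\tau]}\R,\R)<\tfrac12 \iff \theta(\tau)\leq 0 .
\]
So it is enough to prove, for every $m\in\N$: (i) $\theta(T/2)>\pi\Rightarrow\theta(mT/2)>m\pi$; (ii) $\theta(T/2)>0\Rightarrow\theta(mT/2)>0$; (iii) $\theta(T/2)\leq 0\Rightarrow\theta(mT/2)\leq 0$.

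\textit{Step 2 (half-period maps).} The angle obeys the Pr\"ufer equation $\dot\theta=g(t,\theta)$ with $g(t,\phi)=\langle e_\phi,S_\Psi(t)e_\phi\rangle$, $e_\phi=(\cos\phi,\sin\phi)$, $S_\Psi(t)=-J_0\dot\Psi(t)\Psi(t)^{-1}$. Here $g$ is $T$-periodic in $t$ and $\pi$-periodic in $\phi$; and since $S_\Psi(-t)=IS_\Psi(t)I$ and $Ie_\phi=e_{-\phi}$, it satisfies $g(-t,\phi)=g(t,-\phi)$, hence the reflection identity $g(T/2+s,\phi)=g(T/2-s,-\phi)$. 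Let $\Phi_{a,b}$ be the time-$b$ flow of this scalar equation started at time $a$, and set $F:=\Phi_{0,T/2}$, $G:=\Phi_{T/2,T}$, $\sigma(\phi):=-\phi$. Then $F$ and $G$ are increasing homeomorphisms of $\R$ commuting with translation by $\pi$; from $T$-periodicity ($\Phi_{a+T,b+T}=\Phi_{a,b}$) one reads off $\theta(mT/2)=(H_{m-1}\circ\cdots\circ H_0)(0)$, where $H_i=F$ for $i$ even and $H_i=G$ for $i$ odd, in particular $\theta(T/2)=F(0)$; and the reflection identity gives the reversibility relation $G=\sigma\circ F^{-1}\circ\sigma$ (equivalently, $F$ and $G$ are the continuous lifts of the actions on $\mathbb{RP}^1$ of $\Psi(T/2)$ and of $\Psi(T)\Psi(T/2)^{-1}=I\Psi(T/2)^{-1}I$).

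\textit{Step 3 (induction).} From $G=\sigma F^{-1}\sigma$ together with monotonicity and $\pi$-equivariance of $F$ one gets, for $c\in\{0,\pi\}$, that $F(0)>c\iff G(0)=-F^{-1}(0)>c$ and $F(0)\leq c\iff G(0)\leq c$ (for instance $F(0)>\pi\iff F(-\pi)=F(0)-\pi>0\iff F^{-1}(0)<-\pi\iff G(0)>\pi$). Hence in case (i) \emph{both} $F(0)>\pi$ and $G(0)>\pi$, and since every $H_i$ is increasing with $H_i(\phi+\pi)=H_i(\phi)+\pi$, induction on $m$ yields $\theta(mT/2)\geq m\pi$, with strict inequality once $m\geq 1$: at the inductive step $\theta((m+1)T/2)=H_m(\theta(mT/2))\geq H_m(m\pi)=H_m(0)+m\pi>(m+1)\pi$. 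Cases (ii) and (iii) are the same induction with $\pi$ replaced by $0$ (and, in (iii), ``$>$'' by ``$\leq$''). The one genuinely delicate point is Step 2: establishing $G=\sigma\circ F^{-1}\circ\sigma$ requires careful bookkeeping of the reversibility $\Psi(-t)=I\Psi(t)I$ \emph{together with} the non-group relation $\Psi(t+T)=\Psi(t)\Psi(T)$ (which force $\Psi(T)=I\Psi(T/2)^{-1}I\,\Psi(T/2)$), and one must also verify the dictionary of Step 1 at the possibly degenerate endpoints where $\Psi(\tau)\R=\R$; everything else is soft.
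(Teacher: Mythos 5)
Your argument is correct, and it takes a genuinely different route from the paper. The paper works with the spectral/winding-number reformulation $\mu_I(D) = 2\alpha_I(D)+\tfrac12$ of the Robbin--Salamon index developed in Section~3: from $\mu_{\RS}(\Psi|_{[0,T/2]}\R,\R)\geq\tfrac32$ it extracts an eigenfunction $\gamma$ of the half-period asymptotic operator $A_{S_\Psi|_{[0,T/2]}}$ with negative eigenvalue and relative winding $\geq\tfrac12$, then concatenates $\gamma$ with its reflection $\gamma_I(t)=I\gamma(\tfrac{T}{2}-t)$ $m$ times to produce an eigenfunction of $A_{S_\Psi|_{[0,mT/2]}}$ with the \emph{same} negative eigenvalue and winding $\geq\tfrac{m}{2}$, whence $\alpha_I\geq\tfrac{m}{2}$. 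The first assertion follows; the other two are handled by the same device applied to the eigenvalue $\lambda_0$ with winding $0$. You instead bypass the spectral theory entirely: you pass to the lifted Pr\"ufer angle $\theta(t)$ of $\Psi(t)\R$, translate the three inequalities on $\mu_{\RS}$ into inequalities on $\theta(mT/2)$ (this dictionary is correct, including at degenerate endpoints where $\theta(\tau)\in\pi\Z$, and should of course be actually verified), encode the reversibility $\Psi(-t)=I\Psi(t)I$ as the relation $G=\sigma F^{-1}\sigma$ between the two half-period circle maps, and run a monotonicity induction. Both proofs exploit the same structural fact --- the reflection $\Psi(T)=I\Psi(T/2)^{-1}I\,\Psi(T/2)$ forced by the two functional equations on $\Psi$ --- but at different levels: the paper at the level of eigenfunctions of $A_D$, you at the level of lifted circle maps. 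Your approach is more elementary and self-contained (needing only the axiomatic properties of $\mu_{\RS}$ plus the model rotation computation), whereas the paper's stays inside the $\mu_I$/winding-number formalism it uses elsewhere in Section~3, so it integrates more smoothly with the rest of that section. You flag the two delicate points (the dictionary at degenerate endpoints and the identity $G=\sigma F^{-1}\sigma$) rather than carry them out; both do go through, the latter by the standard time-reversal computation with the symmetric Pr\"ufer equation $g(T/2+s,\phi)=g(T/2-s,-\phi)$.
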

\begin{proof}
Let $A_{S_\Psi|_{[0,\frac{T}{2}]}}$ and $A_{S_\Psi|_{[0,\frac{mT}{2}]}}$ be the associated self-adjoint operators with domains $W^{1,2}_I([0,\frac{T}{2}],\R^2)$ and $W^{1,2}_{-I}([0,\frac{mT}{2}],\R^2)$  respectively. Due to the assumption, there exists an eigenfunction $\gamma\in L^{2}_I([0,\frac{T}{2}],\R^2)$ belonging to a negative eigenvalue $\lambda$ of $A_{S_\Psi|_{[0,\frac{T}{2}]}}$ such that 
$$
w(\gamma,\lambda,S_\Psi|_{[0,\frac{T}{2}]})\geq\frac{1}{2}.
$$
We define $\gamma^m$ by concatenating $\gamma$ and $\gamma_I$ $m$-times where $\gamma_I:=I\circ \gamma(\frac{T}{2}-t)$, explicitly,
$$
\gamma^{m}(t):=\left\{\begin{aligned}\gamma\Big(t-T\Big\lfloor\frac{t}{T}\Big\rfloor\Big),\qquad\quad\; &  t\in \mathrm{I}_T,\\[1ex]
\gamma_I\Big(t-T\Big\lfloor\frac{t}{T}\Big\rfloor-\frac{T}{2}\Big), \quad & t\in \Big[0,\frac{mT}{2}\Big]\setminus \mathrm{I}_T \end{aligned}\right.
$$ 
where 
$$
\mathrm{I}_T:= \Big[0,\frac{T}{2}\Big]\cup\Big[T,\frac{3T}{2}\Big]\cup\cdots\cup\Big[ \Big\lfloor \frac{(m-1)}{2}\Big\rfloor T, \Big\lfloor \frac{(m-1)}{2}\Big\rfloor T+\frac{T}{2}\Big].
$$
Then $\gamma^m\in W^{1,2}_I([0,\frac{mT}{2}],\R^2)$ is an eigenfunction of $A_{S_\Psi|_{[0,\frac{mT}{2}]}}$ belonging to the same eigenvalue $\lambda$. The claim follows from 
$$
w\big(\gamma^m,\lambda,S_\Psi|_{[0,\frac{mT}{2}]}\big)=mw\big(\gamma,\lambda,S_\Psi|_{[0,\frac{T}{2}]}\big)\geq \frac{m}{2},\quad m\in\N.
$$
The second claim is proved in the same way. To show the last assertion, we observe that the assumption guarantees that  the eigenvalue $\lambda$ of $A_{S_\Psi|_{[0,\frac{T}{2}]}}$ with $w(\lambda,S_\Psi|_{[0,\frac{T}{2}]})=0$ is positive. $\lambda$ is also a positive eigenvalue of $A_{S_\Psi|_{[0,\frac{mT}{2}]}}$ and $w(\lambda,S_\Psi|_{[0,\frac{mT}{2}]})=0$ for every $m\in\N$. This yields that every negative eigenvalue of $A_{S_\Psi|_{[0,\frac{mT}{2}]}}$ has a negative relative winding number and this proves the last assertion.
\end{proof}

Let $(P,T)$ be a periodic Reeb orbit in a contact manifold $(M,\alpha)$ of dimension $2n+1$, i.e. $P:[0,T]\to M$ with $P(0)=P(T)$, $t\in\R$ and $\dot P(t)=X(P(t))$ where $X$ is the Reeb vector field of $(M,\alpha)$. For a given unitary trivialization 
$$
\Phi_P(t):\R^{2n}\pf\xi_{P(t)},\quad\Phi_P(t)=\Phi_P(t+T),\quad  t\in\R
$$ 
of the contact structure $\xi:=\ker\alpha$ along $P$, we obtain  $\Psi_P:\R\to\Sp(\R^2)$  the linearized Reeb flow along $P$ with respect to the trivialization $\Phi_P$, given by
\beq\label{trivialized linearized flow}
\Psi_P(t):=\Phi_P(t)^{-1}\circ T\phi_X^{t}(P(0))|_{P^*\xi}\circ\Phi_P(0),\quad t\in\R
\eeq
satisfying 
$$
\Psi_P(t+T)=\Psi_P(t)\Psi_P(T).
$$ 
We call a periodic Reeb orbit $(P,T)$ nondegenerate if $\Psi_P|_{[0,T]}$ is nondegenerate.
We define the  \textbf{Conley-Zehnder index} of a periodic Reeb orbit $(P,T)$ with respect to a trivialization $\Phi_P$  by
$$
\mu_{\CZ}^\Phi(P):=\mu_\CZ(\Psi_P|_{[0,T]}).
$$
As observed, $\Psi_P$ defines a loop of symmetric matrices 
\beq\label{loop of symmetric matrices}
S_{P}(t):=-J_0\dot\Psi_P(t)\Psi_P(t)^{-1},\quad S(t)=S(t+T),\quad t\in\R.
\eeq
In dimension 2, we also have
$$
\mu_{\CZ}^\Phi(P)=\mu(S_{P}|_{[0,T]})
$$
if $(P,T)$ is nondegenerate. Note that $S_P$ and $\Psi_P$ depend on the trivialization $\Phi$ along $P$ although the notations do not indicate this.

We consider an additional structure on a contact manifold $(M,\alpha)$, namely a smooth involution $\rho:M\to M$  satisfying 
$$
\rho^*\alpha=-\alpha.
$$ 
We call a triple $(M,\alpha,\rho)$ with such an involution a {\bf real contact manifold}. We shall associate the Robbin-Salamon index to a {\bf symmetric periodic Reeb orbit} $(P,T)$,  i.e. $\im\,P=\rho(\im\,P)$. By time shift, we may assume that 
$$
\rho\circ P(t)=P(-t),\quad t\in\R.
$$
In particular, 
$$
P\Big(\frac{mT}{2}\Big)\in\Fix\rho,\quad m\in\Z.
$$
In order to do this, we need to trivialize $\xi$ symmetrically. For any Hermitian connection $\nabla$ on $\xi$, 
the connection
\beq\label{symmetric connection}
\nabla^\rho_XY:=\frac{1}{2}\bigr(\nabla_XY+T\rho|_\xi\big(\nabla_{(T\rho (X))}{(T\rho|_\xi (Y))}\big)\bigr)
\eeq
is Hermitian again and satisfies
\beq\label{symmetric property}
\nabla^\rho_XY=T\rho|_\xi \big(\nabla^\rho_{(T\rho (X))}{(T\rho|_\xi (Y))}\big).
\eeq
Recall that $\JJ_\rho$ is the space of $d\alpha|_\xi$-compatible  almost complex structures on $\xi$ antiinvariant under $\rho$, i.e. $\rho^*J=-J$ for $J\in\JJ_\rho$.

\begin{Lemma}\label{Lemma:symmetric trivialization}
Let $\varphi:\R\to (M,\alpha,\rho)$ be such that $\varphi(t)=\varphi(t+T)$ and $\rho\circ \varphi(t)=\varphi(-t)$. Then there exists a symmetric unitary trivialization
$$
\wt \Phi: \R\x\R^{2n}\pf \varphi^*\xi,\quad \wt\Phi(t,\cdot)=\wt\Phi(t+T,\cdot)
$$
of $(\xi,J)$ for $J\in\JJ_\rho$ along $\varphi$. To be precise,
$$
\Phi(t)^*d\alpha|_\xi= d\mathbf{x}\wedge d\mathbf{y},\quad J(\varphi(t))\Phi(t)= \Phi(t)J_0,\quad T\rho|_{\xi_{\varphi(t)}}\circ \Phi(t)=\Phi(-t)\circ I_{\R^{2n}}.
$$
where 
$$
\Phi(t):=\wt\Phi(t,\cdot):\{t\}\x\C^n\to \xi_{\varphi(t)},\quad t\in\R.
$$
More generally, let $(D^2\setminus\Gamma,I|_{D^2\setminus\Gamma})$ be a disk with punctures satisfying $I(\Gamma)=\Gamma$ where $\Gamma$ is a finite set of points in $D^2$. For a map $\varphi:(D^2\setminus\Gamma,I|_{D^2\setminus\Gamma})\to (M,\alpha,\rho)$ such that $\rho\circ \varphi=\varphi\circ I|_{D^2\setminus\Gamma}$, there is a symmetric unitary trivialization
$$
(D^2\setminus\Gamma,I|_{D^2\setminus\Gamma})\x(\R^{2n},J_0,I_{\R^{2n}})\pf (\varphi^*\xi,J,T\rho|_{\varphi^*\xi}).
$$
Furthermore if $\Gamma=\emptyset$, it is unique up to homotopy of symmetric unitary trivializations and a multiplication by $\mathrm{diag}(-1,1,\dots, 1)\in O(n)\setminus SO(n)\subset U(n)$. 
\end{Lemma}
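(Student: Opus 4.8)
The plan is to construct $\wt\Phi$ first over ``half'' the domain --- the interval $[0,T/2]$ in the loop case, a closed half-disk in the punctured-disk case --- anchoring it along $\Fix\rho$ to frames adapted to the antilinear involution $T\rho$, and then to propagate it to the whole domain by $\rho$-equivariance, using the $\rho$-symmetric Hermitian connection $\nabla^\rho$ of \eqref{symmetric connection}--\eqref{symmetric property} to guarantee smoothness across the fixed locus. \emph{For the loop:} at a point $p=\varphi(mT/2)\in\Fix\rho$ the map $T\rho|_{\xi_p}$ is a $J$-antilinear, $d\alpha|_\xi$-antisymplectic involution of $\xi_p$, so $\Fix(T\rho|_{\xi_p})$ is a Lagrangian real form of $\xi_p$; taking an orthonormal basis of it and extending $\C$-linearly yields a unitary frame $\Phi(mT/2)$ conjugating $T\rho|_{\xi_p}$ to $I_{\R^{2n}}$. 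Fix such frames $\Phi(0)$ and $\Phi(T/2)$. Since $\varphi^*\xi|_{[0,T/2]}$ is trivial and $U(n)$ is connected we may interpolate them by a smooth path of unitary frames $\Phi(t)$, $t\in[0,T/2]$, which we moreover require to be $\nabla^\rho$-parallel on small neighborhoods of $0$ and $T/2$ (parallel-transport $\Phi(0)$ resp.\ $\Phi(T/2)$ near the ends, interpolate freely in between).

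Now set
$$
\Phi(-t):=T\rho|_{\xi_{\varphi(t)}}\circ\Phi(t)\circ I_{\R^{2n}},\qquad t\in[0,T/2],
$$
to obtain $\Phi$ on $[-T/2,T/2]$, and extend $T$-periodically. Each $\Phi(t)$ is unitary since $T\rho|_\xi$ and $I_{\R^{2n}}$ are both antiunitary; well-definedness at $t=0$ is the defining property of $\Phi(0)$, and at $t=\pm T/2$ it reduces --- using $\varphi(-T/2)=\varphi(T/2)$ and $T\rho^2=\id$ --- to the defining property of $\Phi(T/2)$, which also gives $\Phi(-T/2)=\Phi(T/2)$, so the periodic extension is continuous. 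By \eqref{symmetric property}, $T\rho$ carries $\nabla^\rho$-parallel frames along $\varphi$ to $\nabla^\rho$-parallel frames along $\rho\circ\varphi=\varphi(-\,\cdot\,)$; hence near each parameter in $\tfrac T2\Z$ the section $\Phi$ coincides on both sides with a single $\nabla^\rho$-parallel frame along $\varphi$, so $\Phi$ is smooth, and the three displayed identities hold by construction. \emph{For the punctured disk:} let $D^{\pm}$ be the closed upper resp.\ lower half of $D^2\setminus\Gamma$, meeting along $R:=\Fix(I|_{D^2\setminus\Gamma})=(D^2\cap\R)\setminus\Gamma$, where $\varphi$ takes values in $\Fix\rho$ and $T\rho$ restricts to antilinear involutions of the fibers. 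As above, build a symmetric unitary trivialization $\wt\Phi_N$ over a two-sided neighborhood $N$ of $R$ (anchoring along $R$, using $\nabla^\rho$-parallel transport across $R$ for smoothness). Since $\varphi^*\xi$ is a complex vector bundle over the open surface $D^2\setminus\Gamma$ it is trivial, so after fixing a reference trivialization over $D^+$ the frame $\wt\Phi_N|_{D^+\cap N}$ becomes a $U(n)$-valued function which, $U(n)$ being connected, extends over $D^+$ to a unitary trivialization $\wt\Phi_+$ with $\wt\Phi_+=\wt\Phi_N$ near $R$. Define $\wt\Phi:=\wt\Phi_+$ on $D^+$ and $\wt\Phi(z):=T\rho|_{\xi_{\varphi(z)}}\circ\wt\Phi_+(Iz)\circ I_{\R^{2n}}$ on $D^-$; on $N$ both prescriptions equal $\wt\Phi_N$, so $\wt\Phi$ is a well-defined smooth symmetric unitary trivialization over $D^2\setminus\Gamma$.

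\emph{Uniqueness when $\Gamma=\emptyset$.} Two symmetric unitary trivializations $\wt\Phi,\wt\Phi'$ of $\varphi^*\xi\to D^2$ differ by $g:=\wt\Phi^{-1}\wt\Phi'\colon D^2\to U(n)$, and comparing their symmetry identities gives $g\circ I=\bar g$ (entrywise conjugation); in particular $g(D^2\cap\R)\subset O(n)$, so $g(0)\in O(n)$. The homotopy $g_s(z):=g(sz)$, $s\in[0,1]$, consists of maps with $g_s\circ I=\bar g_s$ (since $sIz=I(sz)$ for real $s$) and contracts $g$ to the constant $g(0)$; and two constants in $O(n)$ are joined by such a homotopy exactly when they lie in the same component of $O(n)$. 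Hence $\wt\Phi$ is unique up to homotopy through symmetric unitary trivializations and multiplication by the class of $\mathrm{diag}(-1,1,\dots,1)\in O(n)\setminus SO(n)$.

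The step I expect to require the most care is smoothness across the fixed locus of $\rho$ --- the parameters in $\tfrac T2\Z$, respectively the arc $R$ --- and this is exactly the role of $\nabla^\rho$: once the interpolating frame is taken $\nabla^\rho$-parallel near the fixed locus, the $\rho$-reflected frame glues to it with all derivatives matching, by uniqueness of $\nabla^\rho$-parallel transport. A secondary technical point is the extension of a unitary frame from a closed neighborhood of the fixed locus to the larger half-domain, which is unobstructed thanks to triviality of complex vector bundles over open surfaces together with connectedness of $U(n)$; it can also be organized by an obvious induction over a cell decomposition of the half-domain.
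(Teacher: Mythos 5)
Your proof is correct and follows essentially the same approach as the paper: anchoring unitary frames at the points of $\Fix\rho$, using the symmetric connection $\nabla^\rho$ to ensure smoothness across the fixed locus, extending by $\rho$-equivariance, and contracting the transition map radially for uniqueness. Your version is slightly more explicit in a few spots --- keeping the interpolation unitary throughout instead of appealing to Gram--Schmidt at the end, and writing out the radial homotopy $g_s(z)=g(sz)$ which the paper only asserts --- but the underlying ideas coincide.
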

\begin{proof}
We fix complex vector space isomorphisms $\Phi(t)$ of $\xi_{\varphi(t)}$ with $\mathbb{C}^n$ such that 
$$
T\rho|_{\xi_{\varphi(t)}}\circ \Phi(t)=\Phi(t)\circ I_{\R^{2n}},\quad t=0,\,\tfrac{T}{2}
$$
 and extend them to whole $\varphi^*\xi$ by using the parallel transport induced by $\nabla^\rho$ given in \eqref{symmetric connection} and by patching trivializations symmetrically. Applying the Gram-Schmidt process, the first two identities follow and the third identity follows from \eqref{symmetric property}. The punctured disk case is also proved in a similar way. We first symmetrically trivialize $\varphi^*\xi$ over $\Fix  I_{D^2\setminus\Gamma}$ and extend a trivialization it to the left hand of $\Fix  I_{D^2\setminus\Gamma}$. Then we extend it to the right hand of $\Fix  I_{D^2\setminus\Gamma}$ in a symmetric way.
 See \cite{MS98} for details.

For the uniqueness statement, we choose  $G:D^2\to U(n)$ with $G(z) I=I G(\bar z)$. Here we use the canonical identification $\R^{2n}=\C^n$. It suffices to consider a half map with boundary condition:
$$
G:D^2\cap\H\to U(n),\quad G(D^2\cap\R)\subset O(n).
$$ 
Then there is a homotopy $\wt G:[0,1]\x (D^2\cap\H)\to U(n)$ such that $\wt G(0,z)=\one_{\R^{2n}}$ or $\mathrm{diag}(-1,1,\dots, 1)\in U(n)$, $\wt G(1,z)=G(z)$, and $\wt G(s,D^2\cap\R)\subset O(n)$. Since a change of two symmetric unitary trivializations is represented by such a map $G$, this proves the last assertion. 
\end{proof}

\begin{Rmk}
To extend the above lemma for surfaces, one needs additional assumptions. A pair $(\Sigma,I_\Sigma)$ is called a {\em  real Riemann surface} if a closed Riemann surface $\Sigma$ carries an anticonformal involution $I_\Sigma:\Sigma\to\Sigma$. In contrast to disks or spheres, not every anticonformal involution on a surface with genus has a fixed locus separating $\Sigma$.  The fixed locus $\Fix I_\Sigma$ of $I_\Sigma$ consists of at most $\mathrm{genus}(\Sigma)+1$ disjoint  Jordan curves, called ovals. If the quotient space $\Sigma/I_\Sigma$, which is called a Klein surface, is orientable, $\Fix I_\Sigma$ divides $\Sigma$ into two connected components. If the Klein surface  $\Sigma/I_\Sigma$ is nonorientable, $\Sigma\setminus\Fix I_\Sigma$ is connected and the number of ovals is at most $\mathrm{genus}(\Sigma)$. In fact, the topological  type of an anticonformal involution is characterized by the number of ovals together with the orientability of the Klein surface. For the study on symmetries on Riemann surfaces, we refer to \cite{BCGG10}. We expect that the above lemma is true for a symmetric map $\varphi$ from $(\Sigma\setminus\Gamma,I_{\Sigma\setminus\Gamma})$ for a nonempty finite set  $\Gamma$ of points in $\Sigma$ if $\Fix I_{\Sigma\setminus\Gamma}$ separates $\Sigma\setminus\Gamma$ and $\Fix(T\rho|_{\varphi^*\xi})$ over $\varphi(\Fix I_{\Sigma\setminus\Gamma})$ is trivial.
\end{Rmk}

For a given symmetric unitary trivialization $\Phi_P$ along a symmetric periodic Reeb orbit $P$ together with the linearized Reeb flow $T\phi_X^t$, we obtain $\Psi_P:\R\to\Sp(\R^{2n})$ and $S_P:\R\to\mathrm{Sym}(\R^{2n})$ given by \eqref{trivialized linearized flow} and \eqref{loop of symmetric matrices} respectively.

\begin{Lemma}\label{lem:I-invariant}
Let $\Psi_P$ and $S_P$ be as above for a given symmetric periodic orbit $(P,T)$. Then,
$$
\Psi_P(-t)=I_{\R^{2n}}\Psi_P(t) I_{\R^{2n}},\quad S_P(-t)=I_{\R^{2n}} S_P(t) I_{\R^{2n}}.
$$
Consequently $S_P(\frac{mT}{2})$, $m\in\Z$ are diagonal matrices.
\end{Lemma}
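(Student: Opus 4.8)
The plan is to transport the symmetry $\rho\circ P(t)=P(-t)$ through each object entering \eqref{trivialized linearized flow} and \eqref{loop of symmetric matrices}, using two ingredients: the time–reversal property of the Reeb flow and the defining equivariance of the symmetric unitary trivialization supplied by Lemma~\ref{Lemma:symmetric trivialization}.

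First I would record the flow identity. Since $\rho^*\alpha=-\alpha$ gives $\rho^*X=-X$, i.e. $T\rho\circ X=-X\circ\rho$, the Reeb flow satisfies $\rho\circ\phi_X^t=\phi_X^{-t}\circ\rho$ for all $t$; indeed both sides are the integral curve of $-X$ through $\rho(p)$ and uniqueness applies. Differentiating this at $P(0)$, using $\phi_X^t(P(0))=P(t)$ and $\rho(P(0))=P(0)$ (valid because $\rho P(0)=P(-0)=P(0)$), and restricting to $\xi$ — all the differentials in sight preserve $\xi$ — yields
\[
T\rho|_{\xi_{P(t)}}\circ T\phi_X^t(P(0))|_\xi \;=\; T\phi_X^{-t}(P(0))|_\xi\circ T\rho|_{\xi_{P(0)}}.
\]
As $\rho$ is an involution fixing $P(0)$, $T\rho|_{\xi_{P(0)}}$ is its own inverse, so $T\phi_X^{-t}(P(0))|_\xi=T\rho|_{\xi_{P(t)}}\circ T\phi_X^t(P(0))|_\xi\circ T\rho|_{\xi_{P(0)}}$.

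Next I would substitute the trivialization. Lemma~\ref{Lemma:symmetric trivialization} gives $T\rho|_{\xi_{P(t)}}\circ\Phi_P(t)=\Phi_P(-t)\circ I_{\R^{2n}}$, hence $T\rho|_{\xi_{P(t)}}=\Phi_P(-t)\circ I_{\R^{2n}}\circ\Phi_P(t)^{-1}$, and at $t=0$, $T\rho|_{\xi_{P(0)}}=\Phi_P(0)\circ I_{\R^{2n}}\circ\Phi_P(0)^{-1}$. Plugging these into the formula for $\Psi_P(-t)=\Phi_P(-t)^{-1}\circ T\phi_X^{-t}(P(0))|_\xi\circ\Phi_P(0)$, the trivializations telescope and one is left with $\Psi_P(-t)=I_{\R^{2n}}\,\Psi_P(t)\,I_{\R^{2n}}$. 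Differentiating this in $t$ gives $\dot\Psi_P(-t)=-I_{\R^{2n}}\dot\Psi_P(t)I_{\R^{2n}}$, and combining with $\Psi_P(-t)^{-1}=I_{\R^{2n}}\Psi_P(t)^{-1}I_{\R^{2n}}$ and the anticommutation $J_0 I_{\R^{2n}}=-I_{\R^{2n}}J_0$ (immediate from the explicit matrices, or from $I_{\R^{2n}}$ being complex conjugation on $\C^n$ and $J_0$ multiplication by $i$) yields
\[
S_P(-t)=-J_0\dot\Psi_P(-t)\Psi_P(-t)^{-1}=I_{\R^{2n}}\big(-J_0\dot\Psi_P(t)\Psi_P(t)^{-1}\big)I_{\R^{2n}}=I_{\R^{2n}} S_P(t) I_{\R^{2n}}.
\]
For the last claim, $S_P$ is $T$-periodic by \eqref{loop of symmetric matrices}, so $S_P(\tfrac{mT}{2})=S_P(-\tfrac{mT}{2})$ since $\tfrac{mT}{2}-(-\tfrac{mT}{2})=mT$; together with $S_P(-\tfrac{mT}{2})=I_{\R^{2n}}S_P(\tfrac{mT}{2})I_{\R^{2n}}$ this forces $S_P(\tfrac{mT}{2})$ to commute with $I_{\R^{2n}}=\mathrm{diag}(\one_{\R^n},-\one_{\R^n})$, hence to be block diagonal with respect to $\R^n\oplus i\R^n$ — in dimension $2$, a genuinely diagonal matrix. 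The argument is essentially bookkeeping; the only place demanding care is tracking the source/target subspaces $\xi_{P(\pm t)}$ so that the equivariance relation of Lemma~\ref{Lemma:symmetric trivialization} is invoked at the correct parameter value, and recalling that \eqref{trivialized linearized flow} is valid for negative $t$ as well.
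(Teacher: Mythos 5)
Your proof is correct and follows essentially the same route as the paper: conjugate the Reeb flow by $\rho$ using $\rho\circ\phi_X^t=\phi_X^{-t}\circ\rho$, transport through the equivariance relation $T\rho|_{\xi_{P(t)}}\circ\Phi_P(t)=\Phi_P(-t)\circ I_{\R^{2n}}$ from Lemma~\ref{Lemma:symmetric trivialization}, differentiate and use $J_0I_{\R^{2n}}=-I_{\R^{2n}}J_0$ for the $S_P$ identity, then combine $T$-periodicity with the symmetry to handle $S_P(mT/2)$. Your closing observation that commuting with $I_{\R^{2n}}$ a priori only yields block-diagonality (genuinely diagonal only when $n=1$) is a more careful statement than the paper's, which is effectively working in dimension two where the two notions coincide.
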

\begin{proof}
A direct computation shows
\bean
\Psi_P(-t)&=\Phi_P(-t)^{-1}\circ T\phi_X^{-t}(P(0))|_{\xi_{P(0)}}\circ\Phi_P(0)\\
&=\Phi_P(-t)^{-1}\circ T\rho|_{\xi_{P(t)}}\circ T\phi_X^{t}(P(0))|_{\xi_{P(0)}}\circ T\rho|_{\xi_{P(0)}}\circ\Phi_P(0)\\
&=I_{\R^{2n}}\circ\Phi_P(t)^{-1}\circ T\phi_X^{t}(P(0))|_{\xi_{P(0)}}\circ\Phi_P(0)\circ I_{\R^{2n}}\\
&=I_{\R^{2n}}\Psi_P(t) I_{\R^{2n}}.
\eea
The second identity follows from this and shows that $S_P(0)$ is diagonal. Moreover, since $S_P$ is $T$-periodic, 
$$
S_P\big(\tfrac{T}{2}-t\big)=I_{\R^{2n}} S_P\big(-\tfrac{T}{2}+t\big)  I_{\R^{2n}}=I_{\R^{2n}}  S_P\big(\tfrac{T}{2}+t\big)  I_{\R^{2n}}
$$
and hence $S_P(\frac{T}{2})$ is diagonal as well.
\end{proof}

This lemma enables us to associate the Robbin-Salamon index to symmetric periodic orbits. A symmetric periodic orbit $(P,T)$ naturally gives the Reeb chord 
$$
C:=P|_{[0,\frac{T}{2}]}:\Big(\big[0,\tfrac{T}{2}\big],\big\{0,\tfrac{T}{2}\big\}\Big)\to (M,\Fix\rho).
$$
If $\Psi_P$ is the linearized Reeb flow along $P$ with respect to a symmetric trivialization $\Phi$, we define
$$
\Psi_C:=\Psi_P|_{[0,\frac{T}{2}]}:\big[0,\tfrac{T}{2}\big]\to\Sp(\R^{2n}).
$$
Then  the \textbf{Robbin-Salamon index} of a chord $(C,\frac{T}{2})$ with respect to $\Phi$ is defined by
$$
\mu_\RS^\Phi(C):=\mu_\RS(\Psi_C\R^n,\R^n).
$$
A Reeb chord $(C,\frac{T}{2})$ is called nondegenerate if $(\Psi_C\R^n,\R^n)$ is nondegenerate. As observed in Lemma \ref{lem:I-invariant}, we have
$$
D_C:=S_P|_{[0,\frac{T}{2}]}:\Big(\big[0,\tfrac{T}{2}\big],\big\{0,\tfrac{T}{2}\big\}\Big)\to\big(\mathrm{Sym}(\R^{2n}),\mathrm{Diag}(\R^{2n})\big)
$$
and especially in dimension 2,
$$
\mu^\Phi_{\RS}(C)=\mu_I(D_C)
$$
provided that $(C,\frac{T}{2})$ is nondegenerate.\\[-1ex]

In what follows we will describe the indices of symmetric periodic orbits under iteration. For $m\in\N$, we denote by $(P^m,mT)$ the $m$ times iteration of a periodic orbit $(P,T)$, i.e.
$$
P^m:[0,mT]\to M,\quad t\mapsto P\big(t-T\big\lfloor\tfrac{t}{T}\big\rfloor\big).
$$
We can also iterate a chord $(C,\frac{T}{2})$ in $(M,\alpha,\rho)$ with $C(0),\,C(\frac{T}{2})\in\Fix\rho$ in the following sense. We define 
\beqn
C_\rho:\big[0,\tfrac{T}{2}\big]\to M,\quad t\mapsto\rho\circ C\big(\tfrac{T}{2}-t\big).
\eeq
Note that $(C_\rho,\frac{T}{2})$ is also a chord with $C_\rho(0),\,C_\rho(\frac{T}{2})\in\Fix\rho$.
We define $(C^m,\frac{mT}{2})$, $m\in\N$ by
$$
C^{m}(t):=\left\{\begin{aligned}C\Big(t-T\Big\lfloor\frac{t}{T}\Big\rfloor\Big),\qquad\quad\; &  t\in I_T,\\[1ex]
C_\rho\Big(t-T\Big\lfloor\frac{t}{T}\Big\rfloor-\frac{T}{2}\Big), \quad & t\in \Big[0,\frac{mT}{2}\Big]\setminus I_T \end{aligned}\right.
$$
where 
$$
I_T:= \Big[0,\frac{T}{2}\Big]\cup\Big[T,\frac{3T}{2}\Big]\cup\cdots\cup\Big[ \Big\lfloor \frac{(m-1)}{2}\Big\rfloor T, \Big\lfloor \frac{(m-1)}{2}\Big\rfloor T+\frac{T}{2}\Big].
$$
Note that $(C^{2m},mT)=(P^m,mT)$ if $(C,\frac{T}{2})$ is the half-chord of a periodic orbit $(P,T)$. We use a symmetric unitary  trivialization $\Phi_P(t):\R^{2n}\to\xi_{P(t)}$, $t\in\R$ again to trivialize $\xi_{C^m(t)}$ and $\xi_{P^m(t)}$, $m\in\N$. With this choice of trivialization, the following is a direct consequence of Proposition \ref{prop:Hormander index} and Proposition \ref{Prop:RS-index increases}. The assertions concerning the Conley-Zehnder index are well known, see for example \cite{Lon02} or \cite[Appendix]{HWZ03}.

\begin{Cor}\label{cor:index behaviors}
Let $(P,T)$ be a periodic Reeb orbit on $(M,\alpha)$ of dimension 3. For $m\in\N$,
\bean
\mu^\Phi_\CZ(P)< 1&\quad\Longrightarrow\quad\mu^\Phi_\CZ(P^m)< 1,\\
\mu^\Phi_\CZ(P)\geq 1&\quad\Longrightarrow\quad\mu^\Phi_\CZ(P^m)\geq 1,\\
\mu^\Phi_\CZ(P)\geq 3&\quad\Longrightarrow\quad\mu^\Phi_\CZ(P^m)\geq 2m+1.\\
\eea
Let $(C,\frac{T}{2})$ be a Reeb chord on $(M,\alpha,\rho)$ of dimension 3. For $m\in\N$,
\bean
\mu^\Phi_\RS(C)< \frac{1}{2}&\quad\Longrightarrow\quad\mu^\Phi_\RS(C^m)< \frac{1}{2},\\
\mu^\Phi_\RS(C)\geq \frac{1}{2}&\quad\Longrightarrow\quad\mu^\Phi_\RS(C^m)\geq \frac{1}{2},\\
\mu^\Phi_\RS(C)\geq \frac{3}{2}&\quad\Longrightarrow\quad\mu^\Phi_\RS(C^m)\geq  \frac{2m+1}{2}.\\
\eea
If $(P,T)$ is a symmetric periodic orbit with $P=C^2$,
$$
\mu^\Phi_\CZ(P)\geq 3\quad\Longrightarrow\quad\mu^\Phi_\RS(C)\geq \frac{3}{2}.
$$
\end{Cor}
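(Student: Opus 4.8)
The plan is to assemble Corollary \ref{cor:index behaviors} directly from the two preceding propositions together with the elementary properties of the indices under iteration, so no genuinely new work is required; the statement is a bookkeeping consolidation. I would organize the proof into three blocks, matching the three groups of implications.

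\medskip

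\emph{The Conley–Zehnder iteration inequalities.} The first three implications involve no $\rho$-symmetry and are the classical iteration facts for the Conley–Zehnder index in dimension $3$: $\mu_\CZ(P)<1 \Rightarrow \mu_\CZ(P^m)<1$, $\mu_\CZ(P)\geq 1 \Rightarrow \mu_\CZ(P^m)\geq 1$, and $\mu_\CZ(P)\geq 3 \Rightarrow \mu_\CZ(P^m)\geq 2m+1$. These are exactly the statements one gets from the winding-number description $\mu(S)=\max\{2i+j:\lambda_i^j(S)<0\}$ by concatenating eigenfunctions, precisely as in \cite{HWZ95b,Lon02,HWZ03}; I would simply cite those references (the paper already flags them as ``well known''), possibly sketching that concatenation of an eigenfunction of $A_{S_P|_{[0,T]}}$ with negative eigenvalue and winding $\geq 1$ produces an eigenfunction of $A_{S_P|_{[0,mT]}}$ with winding $\geq m$, hence $\alpha(S_{P^m})\geq m$ and $\mu_\CZ(P^m)=2\alpha+p\geq 2m+1$.

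\medskip

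\emph{The Robbin–Salamon iteration inequalities.} The three chord implications are the content of Proposition \ref{Prop:RS-index increases}, once one matches up the notation. Recall $(C^m,mT/2)=(P^m,mT)|_{[0,mT/2]}$ is by construction exactly the restriction of the iterated flow to the interval $[0,mT/2]$, and under the symmetric trivialization $\Phi_P$ we have $\Psi_{C^m}=\Psi_P|_{[0,mT/2]}$ and $\mu_\RS^\Phi(C^m)=\mu_\RS(\Psi_P|_{[0,mT/2]}\R,\R)=\mu_I(S_P|_{[0,mT/2]})$ by Lemma \ref{lem:I-invariant} and the identifications in Section 3.2. Hence the hypotheses $\mu_\RS^\Phi(C)\geq\tfrac32$, $\geq\tfrac12$, $<\tfrac12$ read as $\mu_\RS(\Psi_P|_{[0,T/2]}\R,\R)\geq\tfrac32$, etc., and Proposition \ref{Prop:RS-index increases} applied to $\Psi=\Psi_P$ (which satisfies $\Psi(0)=\one$, $\Psi(t+T)=\Psi(t)\Psi(T)$, and $\Psi(-t)=I\Psi(t)I$ by Lemma \ref{lem:I-invariant}) yields $\mu_\RS(\Psi_P|_{[0,mT/2]}\R,\R)\geq\tfrac{2m+1}{2}$, $\geq\tfrac12$, $<\tfrac12$ respectively — which is what is claimed. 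One small point to check is that nondegeneracy of $(C,T/2)$ is what makes $\mu_\RS^\Phi(C)\in\Z+\tfrac12$, so the inequalities ``$<\tfrac12$'' and ``$\geq\tfrac32$'' are the meaningful dichotomies; I would note this in passing.

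\medskip

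\emph{The last implication relating $\mu_\CZ(P)$ and $\mu_\RS(C)$.} For a symmetric periodic orbit $(P,T)$ with $P=C^2$, the assumption $\mu_\CZ^\Phi(P)\geq 3$ gives, via Proposition \ref{prop:Hormander index} applied to $\Psi=\Psi_P$ (whose symmetry $\Psi_P(-t)=I\Psi_P(t)I$ is Lemma \ref{lem:I-invariant}), the conclusion $\mu_\RS(\Psi_P|_{[0,T/2]}\R,\R)\geq\tfrac32$, i.e. $\mu_\RS^\Phi(C)\geq\tfrac32$. I expect essentially no obstacle here: the Hörmander-index bound $|\mu_\CZ(P)-2\mu_\RS^\Phi(C)|\leq 1$ plus $\mu_\RS^\Phi(C)\in\Z+\tfrac12$ forces $2\mu_\RS^\Phi(C)\geq\mu_\CZ(P)-1\geq 2$, and since $2\mu_\RS^\Phi(C)$ is an odd integer $\geq 2$ it is $\geq 3$, hence $\mu_\RS^\Phi(C)\geq\tfrac32$. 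The only thing demanding care throughout is the translation dictionary between the operator-theoretic quantities ($\mu_I$, $\mu_{-I}$, $\mu$) and the geometric ones ($\mu_\RS^\Phi(C)$, $\mu_\CZ^\Phi(P)$, $\mu_\CZ^\Phi(P^m)$) under the fixed symmetric trivialization $\Phi_P$; once that is laid out, every implication is a one-line invocation of an already-proved proposition. The main ``obstacle'', such as it is, is purely organizational: ensuring that the iterated chord $C^m$ and iterated orbit $P^m$ are trivialized consistently (using $\Phi_P$ pulled back along the projection, as stated before the corollary) so that $\Psi_{C^m}$ and $\Psi_{P^m}$ really are the restrictions/iterates to which the propositions apply.
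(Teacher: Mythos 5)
Your proposal is correct and takes precisely the paper's route: the paper itself presents this corollary as a direct consequence of Proposition \ref{prop:Hormander index} and Proposition \ref{Prop:RS-index increases}, citing \cite{Lon02} and \cite[Appendix]{HWZ03} for the well-known Conley--Zehnder iteration inequalities, and your translation dictionary identifying $\Psi_{C^m}=\Psi_P|_{[0,mT/2]}$ under the fixed symmetric trivialization $\Phi_P$ is exactly the bookkeeping the paper has in mind. (The only slight imprecision is in your optional sketch of the third CZ implication: one should note that $\mu_\CZ^\Phi(P)\geq 3$ forces $\lambda_1^1(S_P)<0$, so concatenation gives $\lambda_m^1(S_{P^m})<0$ and hence $\mu_\CZ^\Phi(P^m)\geq 2m+1$, rather than concluding $2\alpha+p\geq 2m+1$ from $\alpha\geq m$ alone; but since you defer to the cited references there, this does not affect the argument.)
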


For a contractible periodic orbit, there is a preferred trivialization, namely a trivialization of $\xi$ along a periodic orbit which is extendable to a filling disk. To be precise, we choose any filling disk $\bar P:D^2\to M$ of a periodic orbit $P$, i.e. $\bar P|_{\p D^2}=P$ and use a trivialization of $\bar P^*\xi$ to define the index. Then the Conley-Zehnder index of $P$ does not depend on the choice of trivialization of $\bar P^*\xi$ and we can write $\mu_\CZ(P,\bar P)$. If $\bar P_1,\,\bar P_2$ are two filling disks of $P$, we glue $\bar P_1$ with the opposite orientation and $\bar P_2$ along $P$ and have $\bar P_1\sqcup_P\bar P_2:S^2\to M$. Then a well known fact is that 
$$
\mu_\CZ(P,\bar P_1)=\mu_\CZ(P,\bar P_2)+2c_1(\xi)[\bar P_1\sqcup_P\bar P_2].
$$

A similar identity exists for the Robbin-Salamon index of chords.
By the uniqueness statement in Lemma \ref{Lemma:symmetric trivialization}, a chord $C$ together with a filling 
half-disk $\bar C:(D^2\cap\H,D^2\cap\R)\to (M,\Fix\rho)$, $\bar C|_{\p D^2\cap\H}=C$, uniquely
determines the Robbin-Salamon index, so we write $\mu_\RS(C,\bar C)$. 

\begin{Cor}\label{cor:index+trivialization}
Let $(P,T)$ be a symmetric periodic orbit in $(M,\alpha,\rho)$ and $(C,\frac{T}{2})$ be its half-chord. If $\Phi_1$ and $\Phi_2$ are symmetric unitary trivializations of $\xi_P$, we have the identity
$$
\mu_\RS^{\Phi_1}(C)=\mu_\RS^{\Phi_2}(C)+\mu_\RS(\Phi_1^{-1}\circ\Phi_2|_{[0,\frac{T}{2}]}\R^n,\R^n).
$$
If $\bar P_1$ and $\bar P_2$ are two symmetric filling disks of $P$ so that there are two associated filling half-disks $\bar C_1$ and $\bar C_2$ of $C$,
$$
\mu_\RS(C,\bar C_1)=\mu_\RS(C,\bar C_2)+c_1(\xi)[\bar P_1\sqcup_P\bar P_2].
$$
\end{Cor}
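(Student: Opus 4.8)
The plan is to get the first identity from a change-of-trivialization computation feeding into Proposition \ref{Prop:loop property of RS-index}, exactly parallel to the Conley--Zehnder formula recalled just above, and then to derive the second identity by a doubling argument that identifies the Maslov index of the resulting loop of Lagrangians with a first Chern number.

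For the first identity, set $g(t):=\Phi_1(t)^{-1}\circ\Phi_2(t)$, a path in $U(n)\subset\Sp(\R^{2n})$ since both trivializations are unitary, so that $\Psi_C^{\Phi_1}(t)=g(t)\,\Psi_C^{\Phi_2}(t)\,g(0)^{-1}$. Since $\Phi_1,\Phi_2$ are symmetric, at the times $t=0$ and $t=T/2$ (where $\rho\circ C=C$) one has $\Phi_i(\cdot)^{-1}\circ T\rho\circ\Phi_i(\cdot)=I_{\R^{2n}}$, whence $g(0)$ and $g(T/2)$ commute with $I_{\R^{2n}}$; as an element of $U(n)$ commuting with complex conjugation is real orthogonal, $g(0),g(T/2)\in O(n)$ and in particular fix $\R^n$. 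First I would normalize to $g(0)=\mathrm{id}$: replacing $\Phi_2$ by $\Phi_2\circ h$ with the constant $h:=\Phi_2(0)^{-1}\Phi_1(0)\in O(n)\subset U(n)$ again gives a symmetric unitary trivialization and leaves both sides of the claimed formula unchanged --- the left side because conjugation by the $\R^n$-fixing orthogonal matrix $h$ does not change $\mu_\RS(\,\cdot\,\R^n,\R^n)$, the right side because $(gh)(\cdot)\R^n=g(\cdot)\R^n$ as paths of Lagrangians. With $g(0)=\mathrm{id}$ we have $\Psi_C^{\Phi_1}=g\cdot\Psi_C^{\Phi_2}$, and the hypotheses of Proposition \ref{Prop:loop property of RS-index} hold with $\Gamma=g$, $\Lambda=\Psi_C^{\Phi_2}\R^n$, $V=\R^n$ (namely $\Lambda(0)=\R^n$, $g(0)\R^n=g(T/2)\R^n=\R^n$, and $g(0)=\mathrm{id}$ fixes $\Lambda$ entirely). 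That proposition yields $\mu_\RS^{\Phi_1}(C)=\mu_\RS^{\Phi_2}(C)+\mu_\RS(g\,\R^n,\R^n)$ with $g=\Phi_1^{-1}\Phi_2|_{[0,T/2]}$, which is the first identity.

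For the second identity I would first record the correspondence between fillings: a filling half-disk $\bar C_i$ of $C$ doubles along its flat boundary (which lies in $\Fix\rho$) to a $\rho$-equivariant filling disk $\bar P_i$ of $P$ with $\bar P_i|_{D^2\cap\H}=\bar C_i$ and $\rho\circ\bar P_i=\bar P_i\circ I$, and conversely. By the disk case ($\Gamma=\emptyset$) of Lemma \ref{Lemma:symmetric trivialization}, $\bar P_i^*\xi$ carries a symmetric unitary trivialization $\hat\Phi_i$, unique up to homotopy and multiplication by $\mathrm{diag}(-1,1,\dots,1)$ (which is invisible to $\mu_\RS$, being a constant orthogonal map fixing $\R^n$), and $\mu_\RS(C,\bar C_i)=\mu_\RS^{\Phi_i}(C)$ with $\Phi_i:=\hat\Phi_i|_{P}$. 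The transition map $G_P:=\Phi_1^{-1}\Phi_2\colon\R/T\Z\to U(n)$ restricts to the path $g$ of the first part, and the standard clutching description of $c_1$ for $\xi$ over $S^2=\bar P_1\sqcup_P\bar P_2$ gives $c_1(\xi)[\bar P_1\sqcup_P\bar P_2]=\deg(\det_\C\circ G_P)$ (orientations fixed to match the Conley--Zehnder convention recalled above). It then remains to see that $\mu_\RS(g\,\R^n,\R^n)=\deg(\det_\C\circ G_P)$. Symmetry of the $\hat\Phi_i$ gives $G_P(-t)=I_{\R^{2n}}G_P(t)I_{\R^{2n}}=\overline{G_P(t)}$, hence $G_P(T-t)=\overline{G_P(t)}$; in particular $G_P(0),G_P(T/2)\in O(n)$, and on $[T/2,T]$ the loop $\det_\C\circ G_P$ is the complex conjugate of its restriction to $[0,T/2]$ traversed backwards. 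Writing $\det_\C(G_P(t))=e^{i\theta(t)}$ with a continuous lift $\theta$, $\theta(0)=0$, this symmetry forces $\theta(T/2)\in\pi\Z$ and $\deg(\det_\C\circ G_P)=\theta(T/2)/\pi$. On the other hand $g\,\R^n$ is a \emph{loop} in $\mathrm{Lag}(\R^{2n})$ (since $g(0),g(T/2)\in O(n)$ fix $\R^n$), so by the Maslov property of $\mu_\RS$ and the description of the Maslov class as $\mathrm{Lag}(\R^{2n})\ni A\,\R^n\mapsto(\det_\C A)^2\in S^1$ we get $\mu_\RS(g\,\R^n,\R^n)=\deg\bigl(t\mapsto\det_\C(g(t))^2\bigr)=\theta(T/2)/\pi$ as well. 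Combining with the first part gives $\mu_\RS(C,\bar C_1)-\mu_\RS(C,\bar C_2)=\mu_\RS(g\,\R^n,\R^n)=c_1(\xi)[\bar P_1\sqcup_P\bar P_2]$.

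The main obstacle is this last matching: $\mu_\RS(g\,\R^n,\R^n)$ depends a priori only on $(\det_\C g)^2$ over the half-interval $[0,T/2]$, while $c_1$ is $\deg(\det_\C\circ G_P)$ over the full loop; the content is the symmetry relation $G_P(T-t)=\overline{G_P(t)}$, under which the second half of $\det_\C\circ G_P$ winds exactly as much as the first (reversal and conjugation each reverse orientation and so cancel), so the full degree is twice the half-contribution --- precisely what squaring records. Secondary care is needed to check that the normalization $g(0)=\mathrm{id}$ in the first part and the $\mathrm{diag}(-1,1,\dots,1)$ ambiguity of symmetric trivializations leave $\mu_\RS$ untouched, and that doubling half-disks and their symmetric trivializations is compatible with the parametrization convention $\rho\circ P(t)=P(-t)$ underlying the definition of $\mu_\RS$ for chords.
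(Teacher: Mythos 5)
Your proposal is correct and follows essentially the same route as the paper's proof: the first identity is obtained by rewriting $\Psi_C^{\Phi_1}$ in terms of $\Psi_C^{\Phi_2}$ and the transition path $g=\Phi_1^{-1}\Phi_2$ and applying Proposition~\ref{Prop:loop property of RS-index}, and the second identity reduces to showing $c_1(\xi)[\bar P_1\sqcup_P\bar P_2]=\mu_\RS(\Gamma_1\R^n,\R^n)$. The only presentational differences are that you normalize $g(0)=\id$ up front (the paper instead absorbs $\Phi_2(0)^{-1}\Phi_1(0)$ using $\Phi_2(0)^{-1}\Phi_1(0)\R^n=\R^n$), and that you verify $c_1=\mu_\RS(\Gamma_1\R^n,\R^n)$ by an explicit $\det_\C$-winding computation using the symmetry $G_P(T-t)=\overline{G_P(t)}$, whereas the paper packages exactly this halving phenomenon as Proposition~\ref{Prop:loop property of RS-index 2} together with the standard identity $c_1=\tfrac12\mu_\RS(\Gamma V,V)$.
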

\begin{proof}
Abbreviate
$$
\Psi_i(t):=\Phi_i(t)^{-1}\circ T\phi_X^t(C(t))|_{\xi_C}\circ\Phi_i(0),\quad t\in\Big[0,\frac{T}{2}\Big],\;i\in\{0,1\}.
$$
and 
$$
\Gamma(t):=\Phi_1(t)^{-1}\circ\Phi_2(t),\quad \Gamma_1:=\Gamma|_{[0,\frac{T}{2}]},\quad\Gamma_2:=\Gamma|_{[\frac{T}{2},T]}.
$$
Using Proposition \ref{Prop:loop property of RS-index}, we compute
\bean
\mu_\RS(\Psi_1\R^n,\R^n)&=\mu_\RS(\Phi_1(t)^{-1}\circ\Phi_2(t)\circ\Psi_2(t)\circ\Phi_2(0)^{-1}\circ\Phi_1(0) \R^n,\R^n)\\
&=\mu_\RS(\Phi_1(t)^{-1}\circ\Phi_2(t)\circ\Psi_2(t) \R^n,\R^n)\\
&=\mu_\RS(\Psi_2(t) \R^n,\R^n)+\mu_\RS(\Gamma_1 \R^n,\R^n).
\eea
This proves the first claimed identity. To show the last one, we assume that $\Phi_1$ and $\Phi_2$ are restrictions over $P$ of two symmetric unitary trivializations of $\bar P_1^*\xi$ and $\bar P_2^*\xi$ respectively. Then we have
$$
c_1(\xi)[\bar P_1\sqcup_P\bar P_2]=\frac{1}{2}\mu_\RS(\Gamma V,V)=\mu_\RS(\Gamma_1V,V),
$$
where the last equality uses Proposition \ref{Prop:loop property of RS-index 2}. Hence the first assertion implies the last one.
\end{proof}

For the goals of the present paper, $(M,\alpha)$ will be a nondegenerate, starshaped hypersurface in $\C^2$ invariant under $\tilde\rho:(z_1,z_2)\mapsto(\bar z_1,\bar z_2)$. In particular every periodic orbit is contractible and the first Chern class $c_1(\xi)$ of the contact distribution $\xi=\ker\alpha$ vanishes.  In this case, we have observed that the indices depend neither on the choice of filling (half-) disks nor of (symmetric) unitary trivializations and hence we omit the superscripts indicating the choice of trivializations:
\beq\label{eq:index notation}
\mu_\RS(C)\in\Z+\frac{1}{2},\quad \mu_\CZ(P)\in\Z.
\eeq

\section{Properties of invariant finite energy spheres}
We recall that a symplectic cobordism $(\Xi,\Om)$ is a symplectic manifold with cylindrical ends $\R_-\x M_-$ and  $\R_+\x M_+$ such that there are contact forms $\alpha_\pm$ on $M_\pm$ satisfying 
$$
\Om|_{M_\pm\x\R_\pm}=d(e^{r_\pm}\alpha_\pm).
$$ 
Here $\R_-:=(-\infty,0]$, $\R_+:=[0,\infty)$  and $r_\pm$ are the coordinates on $\R_\pm$. It is often convenient to consider the decomposition 
$$
\Xi=(\R_-\x M_-)\cup_{M_-}\Xi_0\cup_{M_+}(\R_+\x M_+)
$$ 
where $\Xi_0$ is a compact symplectic manifold with boundary $\p\Xi_0=M_-\cup M_+$. A symplectic cobordism may have only a positive/negative cylindrical end or can be compact.  Note that a trivial symplectic cobordism of a contact manifold $(M,\alpha)$ is the symplectization $(\R\x M,d(e^r\alpha))$ of it. An almost complex structure $\wt J$ on $(\Xi,\Om)$ is called {\em compatible} if $\Om(\cdot,\wt J\cdot)$ is a compatible metric, {\em cylindrical} if $\wt J$ preserves contact hyperplanes $\xi_\pm=\ker\alpha_\pm$ and $\wt J\frac{\p}{\p r_\pm}=X_\pm$ on cylindrical ends where $X_\pm$ are the Reeb vector fields of $(M_\pm,\alpha_\pm)$. We denote by $J:=\wt J|_{\xi_\pm}$. From now on we always assume that $\wt J$ is compatible and cylindrical. Let $(S^2=\C\cup\{\infty\},i)$ be a closed Riemann sphere and $\Gamma$ be a finite set of points in $S^2$. We call a map
$$
\tilde u:(S^2\setminus\Gamma,i)\pf(\Xi,\wt J,\Om)
$$
a {\bf finite energy sphere} if it is (pseudo-) holomorphic 
$$
T\tilde u\circ i=\wt J\circ T\tilde u
$$
and has  finite nonzero energy defined by
$$
E(\tilde u):=\sup_{\phi_-\in\CC_-}\int_{\tilde u^{-1}(\R_-\x M_-)}\phi_-\alpha_-+\int_{\tilde u^{-1}(\Xi_0)}\Om+\sup_{\phi_+\in\CC_+}\int_{\tilde u^{-1}(\R_+\x M_+)}\phi_+\alpha_+
$$
where 
$$
\CC_\pm:=\{\phi_\pm\in C^\infty(\R_\pm,[0,1])\,|\,\phi_\pm'\geq 0\}.
$$
In particular, if $\Gamma=\{\infty\}$, we refer to such a map as a {\bf finite energy plane}. If the target $(\Xi,\Om)$ of $\tilde u$ is the symplectization $(\R\x M,d(e^r\alpha))$ of $(M,\alpha)$, we write
$$
\tilde u=(a,u):S^2\setminus\Gamma\pf\R\x M. \\[1ex]
$$

If there is an antisymplectic involution $\tilde\rho$ on a symplectic cobordism $(\Xi,\Om)$ such that $(M_\pm,\alpha_\pm,\rho:=\tilde\rho|_{M_\pm})$ are real contact manifolds, a triple $(\Xi,\Om,\tilde \rho)$ is called a {\bf real symplectic cobordism}. A symplectization $(\R\x M,d(e^r\alpha),\tilde\rho)$ of a real contact manifold $(M,\alpha,\rho)$ where $\tilde\rho(r,m):=(r,\rho(m))$ for $(r,m)\in\R\x M$, is an example. Suppose that a compatible cylindrical almost complex structure $\wt J$ is {\bf antiinvariant} under $\tilde\rho$, i.e. 
$$
\tilde\rho^*\wt J=-\wt J.
$$
The involution $I$ on $\C$ defines an involution on $S^2=\C\cup\{\infty\}$. Suppose that the set of punctures $\Gamma$ is symmetric, i.e. $I(\Gamma)=\Gamma$. Then $I$ defines an involution on $S^2\setminus\Gamma$ and we denote it also by $I$. Then if $\tilde u:(S^2\setminus\Gamma,i)\to(\Xi,\wt J)$ is holomorphic, so is $\tilde\rho\circ \tilde u\circ I$. We denote by $\JJ_{\tilde \rho}$ the space of antiinvariant compatible cylindrical almost complex structures. We also denote by $\JJ_{\rho}$ the space of compatible almost complex structure on $\xi_\pm$ which is antiinvariant under $\rho$. Then $J\in\JJ_{\rho}$ provided $\wt J\in\JJ_{\tilde\rho}$. Henceforth an almost complex structure $\wt J$ is tacitly assumed to be an element in $\JJ_{\tilde\rho}$. A finite energy sphere $\tilde u:(S^2\setminus\Gamma,I)\to(\Xi,\tilde\rho)$ is called {\bf invariant} if
$$
\tilde u=\tilde\rho\circ \tilde u\circ I.
$$
or equivalently if $\tilde u(z)=\tilde\rho\circ\tilde u(\bar z)$, $z\in S^2$. If this is the case, the image of an invariant finite energy sphere $\tilde u$ is invariant under the involution $\tilde\rho$ and $\tilde u(\Fix I)\subset\Fix\tilde\rho$. The fixed locus $\Fix I$ divides $S^2$ into two hemispheres and we denote by $D_+$ be the upper hemisphere with boundary. We denote by $\Gamma_I$ the set of punctures on $D_+$, by $\Gamma_o$ the set of interior punctures on $D_+$, and by  $\Gamma_\p$ the set of boundary punctures on $D_+$, i.e. 
$$
\Gamma_\p:=\Gamma\cap\Fix I,\quad\Gamma_o:=\Gamma\cap\mathring D_+,\quad \Gamma_I:=D_+\cap\Gamma=\Gamma_\p\sqcup\Gamma_o.
$$ 
We denote a holomorphic map $u_I$ from $(D_+\setminus\Gamma_I,i)$ to $(\Xi,\wt J)$ satisfying the boundary condition $\tilde u_I(\Fix I\setminus\Gamma_\p)\subset\Fix\tilde\rho$ by
$$
\tilde u_I:(D_+,\Fix I)\setminus\Gamma_I\to(\Xi,\Fix\tilde\rho).
$$
and call it a {\bf finite energy half-sphere} if $E(\tilde u_I)<\infty$. If $\Xi=\R\x M$, we write 
$$
\tilde u_I=(a_I,u_I):(D_+,\Fix I)\setminus\Gamma_I\to(\R\x M,\Fix\tilde\rho).
$$
If $\Gamma=\{\infty\}$, $D_+\setminus\Gamma$ coincides with the upper half-plane $\H:=\{z\in\C\,|\,\mathrm{Im}(z)\geq0\}$ and $\tilde u_I$ is called a {\bf finite energy half-plane}.

\subsection{Asymptotic behavior}

Let $\tilde u=(a,u):S^2\setminus\Gamma\to\Xi$ be a finite energy sphere. A puncture $z\in\Gamma$ with a small neighborhood mapped to a bounded region can be removed (i.e. $\tilde u$ extends smoothly over $z$) due to removal of singularities. Let $z\in\Gamma$ be a nonremovable puncture and $\UU(z)$ be a sufficiently small neighborhood of $z$. In view of \cite{Hof93}, $\tilde u$ maps $\UU(z)$ into the cylindrical ends $\R_\pm\x M_\pm$. Moreover, if we write 
$$
\tilde u|_{\UU(z)}=(a,u):\UU(z)\to\R_\pm\x M_\pm,
$$
$a(z')\to\pm\infty$ as $z'\to z$ and $\tilde u$ asymptotically converges (not necessarily uniformly) to a periodic orbit of the Reeb vector field $X_\pm$ of $(M_\pm,\alpha_\pm)$ at $z\in\Gamma$ respectively. From now one we tacitly assume that $\pi\circ Tu\neq 0$ on $\UU(z)$, where $\pi:TM_\pm\to\xi_\pm:=\ker\alpha_\pm$, so that $\tilde u|_{\UU(z)}$ is not a trivial cylinder over an asymptotic periodic orbit. We denote by $\Gamma_\pm\subset\Gamma$ the set of positive/negative punctures approaching periodic orbits of $X_\pm$ respectively. We assume that there is no removable puncture, i.e. 
$$
\Gamma=\Gamma_-\sqcup\Gamma_+.
$$ 
Note that a finite energy plane inside a symplectization always has a unique positive puncture due to the maximum principle. If the asymptotic periodic orbit is nondegenerate, a further study on the asymptotic behavior is carried out by Hofer, Wysocki, and Zehnder \cite{HWZ96}. We recall some of their results indispensable to our story. We choose holomorphic polar coordinates $\phi_z:\R_+\x S^1\to \UU(z)\setminus z$ in a sufficiently small neighborhood $\UU(z)$ of $z\in\Gamma$. We denote by $\Gamma^2(\xi_P)$ resp. $\Gamma^{1,2}(\xi_P)$ the space of $L^2$- resp. $W^{1,2}$-sections of $\xi_P\to P$ for a periodic orbit $(P,T)$ of $(M_\pm,X_\pm)$.

\begin{Thm}[\!\!\cite{HWZ96}]\label{thm:asymptotic formula}
Let $\tilde u:S^2\setminus\Gamma\to \Xi$ be a finite energy sphere converging to a nondegenerate periodic orbit $(P,T)$ of $(M_+,X_+)$ at $z\in\Gamma^+$ asymptotically. Then 
$$
\lim_{s\to\infty}u\circ\phi_z(s,t)=P(Tt)
$$
in $C^\infty(S^1,M)$. Moreover, $\tilde u$ has the following asymptotic formula near the asymptotic orbit. 
\bean\label{eq:asymptotic representation}
u\circ\phi_z(s,t)=\exp_{P(Tt)}[e^{\lambda s}(e(t)+r(s,t))]
\eea
where 
\begin{itemize}
\item[(i)] $r(s,t)\in\xi_{P(Tt)}$ converges to $0$ uniformly in $t\in S^1$ with all derivatives as $s\to\infty$;
\item[(ii)] $\lambda\in\R$ is a negative eigenvalue of the asymptotic operator $A_P$ on $\Gamma^2(\xi_P)$ with $\dom A_P=\Gamma^{1,2}(\xi_P)$ by
$$
A_P(v):=-J(\nabla_tv-T\nabla_vX_+);
$$
\item[(iii)] $e(t)\in\Gamma^{1,2}(\xi_P)$ is an eigenfunction of $A_P$ belonging to $\lambda$.
\end{itemize}
For a negative puncture $z\in\Gamma^-$ where $\tilde u$ converges to a nondegenerate periodic orbit  $(P,T)$ of $(M_-,X_-)$, the same statements remain true with $P(Tt)$ replaced by $P(-Tt)$ and with a negative eigenvalue $\lambda\in\sigma(A_P)$ replaced by a positive eigenvalue.
\end{Thm}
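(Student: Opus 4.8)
Although the statement is quoted from \cite{HWZ96}, here is the route I would follow.

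\emph{Step 1: reduction to a model PDE near the puncture.} Fix holomorphic cylindrical coordinates $\phi_z:[R,\infty)\times S^1\to\UU(z)\setminus\{z\}$ and write $\tilde u\circ\phi_z=(a,u)$; that $a(s,t)\to+\infty$ and the image enters the cylindrical end is \cite{Hof93}. Finite energy forces $\int_{\{s\}\times S^1}u(s,\cdot)^*\alpha_+\to T$ and the $d\alpha_+$-mass of $u$ over $[s,\infty)\times S^1$ to tend to $0$. No energy can escape into bubbles near the puncture — a bubble would be a nonconstant finite energy plane (energy bounded below by the minimal Reeb period) or a holomorphic sphere in the symplectization (excluded by exactness of $d\alpha_+$) — so $u(s,\cdot)$ $C^0$-subconverges to a loop carrying $\alpha_+$-integral $T$ whose velocity lies in the Reeb line, hence a $T$-periodic Reeb orbit up to parametrization. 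Elliptic bootstrapping upgrades this to $C^\infty(S^1,M)$ convergence, and nondegeneracy of $(P,T)$ kills any residual rotation; thus $u\circ\phi_z(s,\cdot)\to P(T\cdot)$.

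\emph{Step 2: normal form and exponential decay.} Using a tubular neighbourhood of $\im P$ and a unitary trivialization of $\xi$ along $P$, write $u\circ\phi_z(s,t)=\exp_{P(Tt)}[\zeta(s,t)]$ with $\zeta(s,t)\in\xi_{P(Tt)}$ small, and recast the holomorphic equation as
$$
\p_s\zeta+J_0\,\p_t\zeta+S(t)\zeta=\NN(s,t,\zeta,\nabla\zeta),
$$
where $A_P=-J_0\tfrac{d}{dt}-S(t)$ in this trivialization and $\NN$ is quadratically small in $(\zeta,\nabla\zeta)$. The main obstacle sits here: one studies $g(s):=\tfrac12\|\zeta(s,\cdot)\|_{L^2(S^1)}^2$ and, integrating by parts in the PDE and using the spectral gap of $A_P$ at $0$ (this is precisely nondegeneracy of $(P,T)$), derives a differential inequality $g''(s)\ge(2c-\varepsilon(s))g'(s)$ with $\varepsilon(s)\to0$; together with a unique-continuation argument ($\zeta$ cannot vanish on a single slice without vanishing identically, which is ruled out since $\pi\circ Tu\ne0$) this gives $\|\zeta(s,\cdot)\|_{L^2}\le Ce^{\lambda s}$ for some $\lambda\in\sigma(A_P)\cap(-\infty,0)$. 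Making the nonlinearity $\NN$ small enough that all these estimates close is where the genuine analytic work lies.

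\emph{Step 3: extracting the leading eigenfunction.} Expand $\zeta(s,\cdot)$ in an $L^2$-eigenbasis $\{e_k\}$ of $A_P$ with eigenvalues $\{\lambda_k\}$; the coefficients obey $c_k'(s)=\lambda_k c_k(s)+(\text{lower order})$, so the slowest-decaying mode dominates, $e^{-\lambda s}\zeta(s,\cdot)$ converges to an eigenfunction $e(t)$ of $A_P$ for $\lambda$, and setting $r(s,t):=e^{-\lambda s}\zeta(s,t)-e(t)$ produces the representation \eqref{eq:asymptotic representation}. That $r\to0$ in $C^\infty$ (item (i)) follows from elliptic estimates for the rescaled shifted maps $e^{-\lambda s}\zeta(s+\cdot,\cdot)$ on fixed compact cylinders. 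The negative-puncture case is obtained by the reflection $(s,t)\mapsto(-s,-t)$, under which the exponential weight $e^{\lambda s}$ with $\lambda<0$ becomes one with $\lambda>0$.
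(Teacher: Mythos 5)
This theorem is quoted from \cite{HWZ96} and is not proved in the paper; there is no internal proof to compare against, so I am judging your sketch against the standard Hofer--Wysocki--Zehnder argument. Your three-step architecture (pass to the cylindrical end and identify the asymptotic Reeb orbit via finite energy; normal form and exponential decay via a spectral gap; extraction of the leading eigenfunction) is the right one.

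There is, however, a genuine error in Step 2. The differential inequality you write, $g''(s)\ge(2c-\varepsilon(s))g'(s)$, does not force exponential decay. Since $g\ge 0$ and $g\to 0$, one has $g'\le 0$ eventually, so the right-hand side is nonpositive and the inequality is satisfied, for instance, by $g(s)=1/s$; it is also satisfied by any nonnegative convex function. The inequality actually used in \cite{HWZ96} bounds $g''$ from below in terms of $g$, not $g'$: writing $\partial_s\zeta=-A_P\zeta+\NN$ in the trivialization and using self-adjointness of $A_P$ together with the quadratic smallness of $\NN$, one gets $g''(s)\ge(c^2-\varepsilon(s))\,g(s)$ with $c>0$ the spectral gap of $A_P$ at $0$ (this is where nondegeneracy enters). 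The standard convexity dichotomy then says that $g$ either grows like $e^{cs}$ or decays like $e^{-cs}$, and the first is excluded by $g\to 0$. Moreover, that the exact rate is an \emph{eigenvalue} $\lambda\in\sigma(A_P)\cap(-\infty,0)$, rather than merely some $\lambda<-c+\varepsilon$, requires an extra step that your Step 3 takes for granted: one studies the Rayleigh quotient $\alpha(s):=\langle A_P\zeta(s,\cdot),\zeta(s,\cdot)\rangle_{L^2}/\|\zeta(s,\cdot)\|_{L^2}^2$, shows it is monotone up to error and converges to a point of $\sigma(A_P)$, and then proves that the normalized slices $\zeta(s,\cdot)/\|\zeta(s,\cdot)\|_{L^2}$ converge to the corresponding eigenfunction. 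Once that is in place, the rest of your Step 3 (rescaled elliptic estimates for $C^\infty$ convergence of $r$) and your reflection $(s,t)\mapsto(-s,-t)$ for negative punctures are fine.
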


Let  $\Gamma$ be symmetric, i.e. $I(\Gamma)=\Gamma$ and let holomorphic polar coordinates $\phi_z:\R_+\x S^1\to\UU(z)\setminus z$ at $z\in\Gamma_\p$ satisfy
\beq\label{eq:hol polar coord}
\phi_z(s,-t)=I\circ\phi_z(s,t).
\eeq
Suppose that $\tilde u:(S^2\setminus\Gamma,I)\to(\Xi,\tilde\rho)$ is an invariant finite energy sphere with respect to $\wt J\in\JJ_{\tilde \rho}$. Note that a nonremovable puncture $z\in\Gamma_\p$ converges to a symmetric periodic orbit. In this situation,
$$
 u\circ\phi_z(s,0),\; u\circ\phi_z\Big(s,\frac{1}{2}\Big)\in\Fix\rho.
$$ 
Since $r(s,t)\to0$ as $s\to\infty$, $T\rho(e(t))=e(-t)$ and $r(s,0),\,r(s,\frac{1}{2})\in\Fix \rho$. Hence,
$$
e(t)\in\Fix T\rho|_{\xi_{P(t)}},\quad T\rho|_{\xi_{P(t)}}\dot e(t)=-\dot e(t),\quad t=0,\,\frac{1}{2}.
$$
Suppose that $(P,T)$ is a symmetric periodic orbit. Then $C:=P|_{[0,\frac{T}{2}]}$ is a Reeb chord satisfying the boundary condition $C(0),\,C(\frac{T}{2})\in\Fix\rho$. 
We denote by  $\Gamma_\rho^{1,2}(\xi_C)$ the space of $W^{1,2}$-sections $v(t)$ with boundary conditions $v(t)\in\Fix T\rho|_{\xi_{C(t)}}$ for $t=0,\,\frac{1}{2}$. Then the asymptotic operator $A_P$ in the theorem descends to an operator $A_C$ on $\Gamma^2(\xi_C)$ with $\dom A_C=\Gamma^{1,2}_\rho(\xi_C)$ since $J\in\JJ_\rho$ and $\rho^*X_\pm=-X_\pm$. Hence the following corollary directly follows from the theorem. 

\begin{Cor}\label{Cor:asymptotic formula for invariant finite energy planes}
Let $\tilde u:(S^2\setminus\Gamma,I)\to(\Xi,\tilde\rho)$ be an invariant finite energy sphere converging to a nondegenerate symmetric periodic orbit $(P,T)$ of $(E_+,X_+)$ at $z\in\Gamma_\p^+$ asymptotically and let $\tilde u_I:(D_+,\Fix I)\setminus\Gamma_I\to(\Xi,\Fix \tilde\rho)$ be the associated finite energy half-sphere.
Then 
$$
\lim_{s\to\infty}u_I\circ\phi_z(s,t)=C(Tt)
$$
in $C^\infty([0,\frac{1}{2}],M)$ where $(C,\frac{T}{2})$ is the half-chord of $(P,T)$. Moreover, $\tilde u_I$ has the following asymptotic formula near the asymptotic orbit: 
$$
u_I\circ\phi_z(s,t)=\exp_{C(Tt)}[e^{\lambda_\rho s}(e_\rho(t)+r_\rho(s,t))]
$$
where 
\begin{itemize}
\item[(i)] $r_\rho(s,t)\in\xi_{C(Tt)}$ converges to $0$ uniformly in $t\in[0,\frac{1}{2}]$ with all derivatives as $s\to\infty$;
\item[(ii)]  $\lambda_\rho\in\R$ is a negative eigenvalue of the asymptotic operator $A_C$ on $\Gamma^2(\xi_C)$;
\item[(iii)] $e_\rho(t)\in\Gamma_\rho^{1,2}(\xi_C)$ is an eigenfunction of $A_{C}$ belonging to $\lambda_\rho$.
\end{itemize}
For a negative puncture $z\in\Gamma_\p^-$ where $\tilde u$ converges to a nondegenerate symmetric periodic orbit  $(P,T)$ of $(M_-,X_-)$, the same statements remain true with $C(Tt)$ replaced by $C(-Tt)$  for the half-chord $C$ of $P$ and with a negative eigenvalue $\lambda_\rho\in\sigma(A_C)$ replaced by a positive eigenvalue.
\end{Cor}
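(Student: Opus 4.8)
The plan is to deduce the corollary directly from the Hofer--Wysocki--Zehnder asymptotic formula, Theorem \ref{thm:asymptotic formula}, by combining it with the invariance $\tilde u=\tilde\rho\circ\tilde u\circ I$ and the symmetric choice of holomorphic polar coordinates in \eqref{eq:hol polar coord}. As remarked just before the statement, the only genuinely new input is that the asymptotic operator $A_P$ of Theorem \ref{thm:asymptotic formula} descends to the operator $A_C$ on $\Gamma^2_\rho(\xi_C)$ with domain $\Gamma^{1,2}_\rho(\xi_C)$; once this is granted, what remains is simply to track how the data $(\lambda,e,r)$ of the asymptotic representation transform under $\tilde\rho$.

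First I would apply Theorem \ref{thm:asymptotic formula} to $\tilde u$ at the boundary puncture $z\in\Gamma_\p^+$, which by hypothesis converges to the nondegenerate symmetric periodic orbit $(P,T)$. This produces $\lim_{s\to\infty}u\circ\phi_z(s,t)=P(Tt)$ in $C^\infty(S^1,M)$ together with the representation $u\circ\phi_z(s,t)=\exp_{P(Tt)}[e^{\lambda s}(e(t)+r(s,t))]$, where $\lambda<0$ is a negative eigenvalue of $A_P$, $e\in\Gamma^{1,2}(\xi_P)$ an eigenfunction belonging to $\lambda$, and $r(s,\cdot)\to 0$ uniformly with all derivatives. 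Here the exponential map is taken with respect to the Riemannian metric adapted to $(\alpha,J)$; since $\rho^*\alpha=-\alpha$ and $\rho^*J=-J$, a short computation shows this metric is automatically $\rho$-invariant, so $\rho$ is an isometry and $\rho\circ\exp_p=\exp_{\rho(p)}\circ T\rho$. Restricting the $C^\infty(S^1,M)$-convergence to $t\in[0,1/2]$ immediately yields $u_I\circ\phi_z(s,t)\to C(Tt)$ in $C^\infty([0,1/2],M)$ for the half-chord $C$ of $P$, and the asymptotic representation of $\tilde u_I$ is nothing but the restriction to $[0,1/2]$ of the one above.

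Next I would push the representation through the involution. Applying $\rho$ and using $\rho\circ P(t)=P(-t)$ gives
$$
\rho\circ u\circ\phi_z(s,t)=\exp_{P(-Tt)}\big[e^{\lambda s}(T\rho\,e(t)+T\rho\,r(s,t))\big].
$$
On the other hand, invariance of $\tilde u$ together with $\phi_z(s,-t)=I\circ\phi_z(s,t)$ yields $\rho\circ u\circ\phi_z(s,t)=u\circ\phi_z(s,-t)$, and the right-hand side, again by Theorem \ref{thm:asymptotic formula}, equals $\exp_{P(-Tt)}[e^{\lambda s}(e(-t)+r(s,-t))]$ with $e$ and $r$ extended $1$-periodically in $t$. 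Since $e^{\lambda s}\to 0$, for large $s$ both arguments lie in a ball on which $\exp_{P(-Tt)}$ is injective; dividing by $e^{\lambda s}$ and letting $s\to\infty$ then gives $T\rho\,e(t)=e(-t)$ for all $t$. Evaluating at $t=0$ and $t=\tfrac12$ (note $-\tfrac12\equiv\tfrac12$) and covariantly differentiating in $t$, using equivariance of the connection $\nabla^\rho$ of \eqref{symmetric connection} from which $A_P$ is built (or equivalently \eqref{symmetric property}), yields $e(0),e(\tfrac12)\in\Fix(T\rho|_\xi)$, $T\rho\,\dot e(0)=-\dot e(0)$ and $T\rho\,\dot e(\tfrac12)=-\dot e(\tfrac12)$. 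Hence $e_\rho:=e|_{[0,1/2]}\in\Gamma^{1,2}_\rho(\xi_C)$. Since $J\in\JJ_\rho$ and $\rho^*X_+=-X_+$, the operator $A_P$ restricts to $A_C$ on $\Gamma^2_\rho(\xi_C)$ as observed before the statement, so $e_\rho$ is an eigenfunction of $A_C$ with negative eigenvalue $\lambda_\rho:=\lambda$, and $r_\rho:=r|_{[0,1/2]}$ has the asserted decay; this settles the positive-puncture case. The negative-puncture case follows verbatim from the corresponding half of Theorem \ref{thm:asymptotic formula}, with $C(Tt)$ replaced by $C(-Tt)$ and $\lambda_\rho$ now a positive eigenvalue of $A_C$.

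The main obstacle is precisely the point I am permitted to take for granted, namely that the reflection $v\mapsto T\rho\circ(v\circ(-\,\cdot))$ on sections of $\xi_P$ commutes with $A_P$, so that $A_P$ genuinely descends to a self-adjoint operator $A_C$ with domain $\Gamma^{1,2}_\rho(\xi_C)$; this is where it is essential that $A_P$ is built from the symmetric Hermitian connection $\nabla^\rho$ and that the two sign changes $\rho^*J=-J$ and $\rho^*X=-X$ cancel in $A_P(v)=-J(\nabla_t v-T\nabla_vX)$. With that established, every remaining step is routine bookkeeping with the symmetry relations for $e$ and $r$, and no compactness or transversality input is required.
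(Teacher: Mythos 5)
Your proposal is correct and follows essentially the same route as the paper: choose symmetric holomorphic polar coordinates, push the Hofer--Wysocki--Zehnder asymptotic representation of $\tilde u$ near $z\in\Gamma_\p$ through the involution to obtain $T\rho\, e(t)=e(-t)$ and the boundary conditions at $t=0,\tfrac12$, and then invoke the fact that $A_P$ descends to $A_C$ on $\Gamma^2_\rho(\xi_C)$. The paper packages all of this into the preparatory paragraph before the corollary and simply declares the corollary to ``directly follow''; your write-up makes explicit two small points the paper leaves implicit, namely that the metric underlying $\exp$ is $\rho$-invariant (so $\rho\circ\exp_p=\exp_{\rho(p)}\circ T\rho$) and that the identity $T\rho\,e(t)=e(-t)$ is extracted by injectivity of $\exp$ on small balls after dividing by $e^{\lambda s}$ and letting $s\to\infty$. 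These are genuine details, but they do not change the argument.
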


\begin{Rmk}
The asymptotic behavior for general finite energy half-planes with totally real boundary conditions is studied by Abbas \cite{Abb04}.
\end{Rmk}

\subsection{Winding numbers}

We briefly recall a notion of winding numbers associated to finite energy spheres from \cite{HWZ95b} and extend this to finite energy half-spheres. Let $\tilde u=(a,u):S^2\setminus\Gamma\to\R\x M$ be a finite energy sphere with $\Gamma\neq\emptyset$ in the symplectization $(\R\x M,d(e^r\alpha))$ of $(M,\alpha)$. Due to  holomorphicity, $\pi\circ Tu(z)$, $z\in S^2\setminus\Gamma$ where $\pi:TM\to\xi$ is complex linear:
$$
\pi\circ Tu(z)\in\Hom_\C\big(T_z (S^2\setminus\Gamma),\xi_{u(z)}\big).
$$
We keep assuming that $\pi\circ Tu\neq0$ to exclude the trivial case that $\tilde u$ is a trivial cylinder over a periodic orbit. Let $\Phi$ be a unitary trivialization of $u^*\xi$ discussed in the previous section. Using $\Phi$ and the trivialization $\UU(z)\cong\R_+\x S^1$ given by $\phi_z$ at $z\in\Gamma^\pm$, we trivialize the complex line bundle $\Hom_\C(T(S^2\setminus\Gamma),u^*\xi)$ over $S^2\setminus\Gamma$ and also the section $\pi\circ Tu$ of it. We denote the trivialization of the section $\pi\circ Tu$ with respect to $\Phi$ near $z\in\Gamma$ by
$$
\gamma_z^\Phi: \UU(z)\to\Hom_\C(\C,\C).
$$ 
Then we abbreviate
\beq\label{eq:rot map}
\rho^\Phi_z(s):S^1\to\C,\quad t\mapsto \big(\gamma_z^\Phi\circ\phi_z(s,\pm t)\big)(e^{\pm it}),\quad z\in\Gamma^\pm
\eeq
respectively and the {\bf winding number} at $z\in\Gamma$ is defined by
$$
\wind^\Phi_\infty(\tilde u;z):=\lim_{s\to\infty}\frac{1}{2\pi} \int_{S^1} (\rho^\Phi_z(s))^* d\theta
$$
where $\theta$ is the angular coordinate on $\C$. The winding number at each puncture depends on the choice of the trivialization $\Phi$ of $u^*\xi$ but the total winding number
$$
\wind_\infty(\tilde u):=\sum_{z\in\Gamma^+}\wind^\Phi_\infty(\tilde u;z)-\sum_{z\in\Gamma^-}\wind^\Phi_\infty(\tilde u;z)
$$
is independent of this choice, see \cite{HWZ95b}.

\begin{Prop}\cite[Proposition 4.1]{HWZ95b}
Let $\tilde u=(a,u):S^2\setminus\Gamma\to\R\x M$ be a finite energy sphere with nondegenerate asymptotic periodic orbits. Then 
$$
\ZZ(\pi\circ Tu):=\{z\in S^2\setminus\Gamma\,|\,\pi\circ Tu(z)=0\}
$$
is a finite set and each zero $z\in\ZZ(\pi\circ Tu)$ has a positive degree.
\end{Prop}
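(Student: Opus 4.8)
The plan is to regard $\zeta:=\pi\circ Tu$ as a section of the complex line bundle $F:=\Hom_\C\big(T(S^2\setminus\Gamma),u^*\xi\big)$ and to show it solves a linear Cauchy--Riemann type equation; the two assertions then follow from the Carleman similarity principle (which controls the local structure of the zeros) together with the asymptotic formula of Theorem \ref{thm:asymptotic formula} (which confines the zeros to a compact set). Recall that $S^2\setminus\Gamma$ is connected and that we are assuming $\pi\circ Tu\neq 0$, i.e.\ $\tilde u$ is not a trivial cylinder over a periodic orbit, so $\zeta$ does not vanish identically.

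To set up the equation, fix $z_0\in S^2\setminus\Gamma$, a holomorphic coordinate $z=s+it$ on a small disk about $z_0$, and a unitary trivialization $\Phi$ of $u^*\xi$ over that disk (which exists since the disk is contractible). In this trivialization $\zeta$ becomes a $\C$-valued function $f$, and $\pi\circ Tu$ is entirely encoded by $f$, since the $\xi$-component of $T\tilde u\circ i=\wt J\circ T\tilde u$ forces $\pi\, u_t=J(u)\,\pi\, u_s$. Differentiating the equation $T\tilde u\circ i=\wt J\circ T\tilde u$ once and projecting with $\pi$ to $\xi$, using that $\wt J$ is compatible and cylindrical and the standard compatibility of $J$ with the chosen Hermitian connection on $\xi$, one obtains an identity of the form
\[
f_{\bar z}+a(z)\,f+b(z)\,\bar f=0
\]
with $a,b$ bounded (hence in $L^p_{\mathrm{loc}}$ for every $p$) and depending only on $\tilde u$ and its first derivatives; this is the computation of \cite{HWZ95b} that also produces the asymptotic operator $A_P$.

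Applying the Carleman similarity principle to this equation at a zero $z_0$ of $f$, and using $f\not\equiv 0$, one gets a factorization $f(z)=(z-z_0)^{k}\,\Theta(z)\,\sigma(z)$ with $k\geq 1$, $\Theta$ continuous and nonvanishing, and $\sigma$ holomorphic with $\sigma(z_0)\neq 0$. Hence the zeros of $\zeta$ are isolated, and the winding number of $\zeta$ around a small loop about $z_0$ equals $k\geq 1$; this number is independent of the coordinate and of the trivialization, so it is a well-defined positive local degree of the zero. This proves the positivity statement, and isolatedness shows that $\ZZ(\pi\circ Tu)$ is discrete in $S^2\setminus\Gamma$.

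Finally, we rule out accumulation of zeros at the punctures. At a removable puncture $\tilde u$ extends smoothly and the argument above applies there too. At a nonremovable puncture $z\in\Gamma$, Theorem \ref{thm:asymptotic formula} gives, in holomorphic polar coordinates $\phi_z(s,t)$ and in the unitary trivialization, that $\pi\circ Tu$ has leading term $\lambda e^{\lambda s}e(t)$ with $\lambda\in\sigma(A_P)$, $\lambda\neq 0$, and $e(t)$ an eigenfunction of $A_P$; since an eigenfunction solves a first-order linear ODE along the orbit it is nowhere vanishing, and the remainder decays relative to the leading term, so $\pi\circ Tu\neq 0$ once $s$ is large. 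Thus $\ZZ(\pi\circ Tu)$ lies in a compact subset of $S^2\setminus\Gamma$, and together with discreteness this forces it to be finite. I expect the main obstacle to be the derivation of the perturbed Cauchy--Riemann equation: the differentiation of $T\tilde u\circ i=\wt J\circ T\tilde u$ must be done carefully with respect to the Hermitian connection in order to identify the zeroth-order coefficient and check its regularity, after which the similarity principle and the quoted asymptotic formula finish the proof in a routine way.
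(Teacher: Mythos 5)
Your proposal is correct and follows essentially the same route as the proof in \cite{HWZ95b}, which is what the paper cites for this proposition (the paper itself gives no proof). The three ingredients you identify are exactly the ones used there: (i) in a local holomorphic coordinate and unitary trivialization of $\xi$, $\pi\circ Tu$ satisfies a perturbed Cauchy--Riemann equation $f_{\bar z}+af+b\bar f=0$ with bounded coefficients (obtained, as you note, by differentiating the pseudoholomorphic curve equation and projecting to $\xi$ with a Hermitian connection; the $\bar f$-term is needed because the coefficient is only $\R$-linear); (ii) the Carleman similarity principle then yields the local factorization $f(z)=(z-z_0)^k\Theta(z)\sigma(z)$ with $k\ge 1$, so zeros are isolated and of positive degree, the degree being invariant under change of coordinate and trivialization; and (iii) near each nonremovable puncture the asymptotic formula of Theorem \ref{thm:asymptotic formula} shows $\pi\circ Tu$ is asymptotic to $\lambda e^{\lambda s}e(t)$, and since a nontrivial eigenfunction $e$ of $A_P$ solves a first-order linear ODE and hence never vanishes, $\pi\circ Tu$ is nonvanishing near the punctures, confining $\ZZ(\pi\circ Tu)$ to a compact subset of $S^2\setminus\Gamma$. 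Compactness plus isolatedness give finiteness. You correctly flag that the derivation of the perturbed CR equation is the only technically delicate step; that is indeed where the work lies in \cite{HWZ95b}, and the rest of your argument is the standard one.
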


We denote by $\wind_\pi(\tilde u)$ the sum of the degrees of zeros in $\ZZ(\pi\circ Tu)$. As a consequence of the above proposition, it holds that 
$$
0\leq \wind_\pi(\tilde u)<\infty.
$$

\begin{Prop}\label{Prop:identity betn wind numbers} {\cite[Proposition 5.6]{HWZ95b}}
Let $\tilde u=(a,u):S^2\setminus\Gamma\to\R\x M$ be a finite energy sphere with nondegenerate asymptotic periodic orbits. Then 
$$
\wind_\infty(\tilde u)=\wind_\pi(\tilde u)+(2-\#\Gamma).
$$
\end{Prop}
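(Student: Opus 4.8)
The plan is to read the identity as a relative first Chern number computation for the complex line bundle section $\pi\circ Tu$: the right side counts its interior zeros, the left side records its asymptotic twisting at the punctures, and the defect between the two is the Euler characteristic $\chi(S^2\setminus\Gamma)=2-\#\Gamma$.

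First I would excise the punctures. For each $z\in\Gamma$ choose a small closed disk neighbourhood $D_z\subset\UU(z)$ of $z$ with $\partial D_z=\phi_z(\{s_0\}\times S^1)$ for a large $s_0$, so that $\pi\circ Tu$ has no zero on $D_z\setminus z$; this is possible because $\ZZ(\pi\circ Tu)$ is finite (the proposition preceding this one) and, by the asymptotic representation in Theorem~\ref{thm:asymptotic formula}, the leading term of $\pi\circ Tu$ near $z$ is governed by an everywhere nonzero eigenfunction. Put $\Sigma:=S^2\setminus\bigcup_{z\in\Gamma}\mathring D_z$; it is a compact surface with boundary components $\partial D_z$ (oriented as $\partial\Sigma$), it contains every zero of $\pi\circ Tu$, and by the cited proposition each zero has positive degree, so $\wind_\pi(\tilde u)$ coincides with the signed count of zeros of $\pi\circ Tu|_\Sigma$.

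Next I would evaluate that count. The complex line bundle $L:=\Hom_\C\!\big(T(S^2\setminus\Gamma),u^*\xi\big)=\big(T(S^2\setminus\Gamma)\big)^*\otimes_\C u^*\xi$ is trivial over $\Sigma$ (a compact surface with nonempty boundary retracts onto a wedge of circles). Fixing a frame of the form $\tau=\tau_T^*\otimes\Phi$, with $\tau_T$ a frame of $T(S^2\setminus\Gamma)|_\Sigma$ and $\Phi$ the unitary trivialization of $u^*\xi$ fixed earlier, and writing $\pi\circ Tu=f\cdot\tau$ with $f:\Sigma\to\C$ vanishing only at interior points, the argument principle (in its topological-degree form, which is all that is available since $f$ only solves a perturbed Cauchy–Riemann equation) applied componentwise along $\partial\Sigma$ gives
$$
\wind_\pi(\tilde u)=\sum_{z\in\Gamma}\frac{1}{2\pi}\int_{\partial D_z}f^*\,d\theta .
$$
For each $z$ I rewrite $\frac{1}{2\pi}\int_{\partial D_z}f^*\,d\theta$ in the frame of $L$ near $z$ implicit in the definition~\eqref{eq:rot map} of $\rho^\Phi_z$, namely $\Phi$ for $u^*\xi$ together with the $\phi_z$-frame (and the factor $e^{\pm it}$) for the tangent bundle. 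Since the $u^*\xi$-factor is the common $\Phi$, the change of frame lives entirely in the tangent factor; by construction of $\wind^\Phi_\infty$ this identifies $\frac{1}{2\pi}\int_{\partial D_z}f^*\,d\theta$ with $\wind^\Phi_\infty(\tilde u;z)$ for $z\in\Gamma^+$ and with $-\wind^\Phi_\infty(\tilde u;z)$ for $z\in\Gamma^-$ (the sign reflecting the reversed boundary orientation together with the $t\mapsto-t$ in \eqref{eq:rot map}), up to the tangent-bundle transition terms, and summing those over all $z$ assembles the relative Chern number of $L$ over $\Sigma$, which by the factorization $L=\big(T(S^2\setminus\Gamma)\big)^*\otimes u^*\xi$ reduces to
$$
c_1\big(u^*\xi|_\Sigma;\Phi\big)-c_1\big(T(S^2\setminus\Gamma)|_\Sigma;\{\phi_z\text{-frames}\}\big)=0-(2-\#\Gamma),
$$
the first term vanishing because $\Phi$ extends over all of $S^2\setminus\Gamma$, and the second being the relative Euler number of $T(S^2\setminus\Gamma)$ for the cylindrical-end framings carried by the $\phi_z$, namely $\chi(S^2\setminus\Gamma)=2-\#\Gamma$. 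Collecting the pieces yields $\wind_\pi(\tilde u)=\wind_\infty(\tilde u)-(2-\#\Gamma)$, as claimed.

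The step I expect to be the main obstacle is the third one: pinning down exactly which framing of $TS^2$ near a puncture is encoded by the $\phi_z$-coordinate and the factor $e^{\pm it}$ in~\eqref{eq:rot map}, and verifying that it is the ``cylindrical'' normalization (relative Euler number $2-\#\Gamma$) and not the one coming from a genuine holomorphic coordinate vanishing at the puncture (relative Euler number $c_1(TS^2)=2$), while simultaneously tracking every orientation — of $\partial D_z$ as a component of $\partial\Sigma$, of the substitution $t\mapsto-t$, and of the sign in the definition of $\wind_\infty$ — so that the boundary data assemble precisely into $\wind_\infty(\tilde u)$ rather than into $\wind_\infty(\tilde u)$ shifted by some multiple of $\#\Gamma$. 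Once these conventions are fixed the computation is mechanical; this is, of course, exactly \cite[Proposition~5.6]{HWZ95b}.
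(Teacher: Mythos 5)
The paper does not prove this proposition but cites \cite[Proposition 5.6]{HWZ95b}; your relative-first-Chern-number/degree argument for the section $\pi\circ Tu$ of $\Hom_\C\big(T(S^2\setminus\Gamma),u^*\xi\big)$ is exactly the argument underlying that reference, and it is correct. The one point you flag as delicate — that the $\phi_z$-coordinates together with the $e^{\pm it}$ factor in \eqref{eq:rot map} encode the \emph{cylindrical} normalization of $T(S^2\setminus\Gamma)$, whose relative Euler number over $\Sigma$ is $\chi(\Sigma)=2-\#\Gamma$ rather than the $c_1(TS^2)=2$ of a disk-chart framing — does in fact work out consistently with the paper's orientation conventions (the $t\mapsto -t$ at negative punctures and the sign in the definition of $\wind_\infty$ precisely compensate the reversed boundary orientation of $\partial D_z\subset\partial\Sigma$), so the computation closes.
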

\quad\\[-2ex]

In what follows, we shall generalize the notion of such winding numbers to finite energy half-spheres. For an invariant finite energy sphere $\tilde u=(a,u):(S^2,I)\to(\R\x M,\tilde\rho)$ for $\wt J\in\JJ_{\tilde \rho}$  with nondegenerate asymptotic periodic orbits, consider a finite energy half-sphere $\tilde u_I=(a_I,u_I):(D_+,\Fix I)\setminus\Gamma_I\to(\R\x M,\Fix\tilde\rho)$. We assume that $\Gamma_\p$ is not empty. We have
$$
\pi\circ Tu_I(z)\in\Hom_\C\big((T_z D_+,T_z\Fix I),(\xi_{u(z)},T_{u(z)}\Fix\rho)\big)
$$
which means that 
$$
\pi\circ Tu_I(z)\in\Hom_\C(T_zD_+,\xi_{u(z)})
$$
and for $z\in\Fix I\setminus\Gamma_\p$, it additionally satisfies that
$$
\pi\circ Tu_I(z)|_{T_z\Fix I}\in\Hom_\R(T_z\Fix I,T_{u(z)}\Fix\rho).
$$
We abbreviate
$$
\ZZ(\pi\circ Tu_I):=\{z\in D_+\setminus\Gamma_I\,|\,\pi\circ Tu_I(z)=0\}.
$$

Using a symmetric unitary trivialization $\Phi$ of $u^*\xi$ as in Lemma \ref{Lemma:symmetric trivialization} and symmetric holomorphic polar coordinates $\phi_z:\R_+\x S^1\to \UU(z)$ in \eqref{eq:hol polar coord} on a small neighborhood $\UU(z)$ of  $z\in\Gamma_\p$, the section $\pi\circ Tu$ is written  as follows.
$$
\gamma_z^\Phi:\big(\UU(z),\Fix I|_{\UU(z)}\big)\to\Hom_\C\big((\C,\R),(\C,\R)\big)
$$
The map defined in \eqref{eq:rot map} satisfies
$$
\rho^\Phi_z(s):\big(S^1,\big\{0,\tfrac{1}{2}\big\}\big)\to(\C,\R).
$$
The {\bf relative winding number} at $z\in\Gamma_\p$ is defined by
$$
\wind^\Phi_\infty(\tilde u_I,z):=\lim_{s\to\infty}\frac{1}{2\pi} \int_0^{\frac{1}{2}}(\rho^\Phi_z(s))^* d\theta\in\frac{1}{2}\Z
$$
where $\theta$ is the angular coordinate on $\C$. The winding number of $\tilde u_I$ at $z\in\Gamma_o$ is defined as before, i.e. 
$$
\wind^\Phi_\infty(\tilde u_I;z):=\wind^\Phi_\infty(\tilde u,z).
$$
As before, each (relative) winding number depends on the symmetric trivialization $\Phi$ of $u_I^*\xi$ whereas the total winding number 
$$
\wind_\infty(\tilde u_I):=\sum_{z\in\Gamma_I^+}\wind^\Phi_\infty(\tilde u_I;z)-\sum_{z\in\Gamma_I^-}\wind^\Phi_\infty(\tilde u_I;z)
$$
does not. 

We define $\wind_\pi(\tilde u_I):=\frac{\wind_\pi(\tilde u)}{2}$. It is easy to show that this agrees with the sum of the (half-) degrees of zeros in $\ZZ(\pi\circ Tu_I)=\ZZ(\pi\circ Tu)\cap D_+$. 

Recall that the Robbin-Salamon index of a chord $(C,\frac{T}{2})$ cannot fully determine the Conley-Zehnder index of a symmetric periodic orbit $(P=C^2,T)$. Nonetheless, the following proposition shows that the winding number of $\tilde u_I$ is exactly half of the winding number of $\tilde u$. This simple observation turns out to be crucial in the proof of our main result.

\begin{Prop}\label{Prop:wind=2 relative wind}
Let $\tilde u=(a,u):(S^2\setminus\Gamma,I)\to(\R\x M,\wt J,\tilde\rho)$ be  an invariant finite energy sphere as above. Then,
$$
 \wind_\infty(\tilde u)=2\wind_\infty(\tilde u_I)
$$
\end{Prop}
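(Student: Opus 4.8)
The plan is to fix, via Lemma~\ref{Lemma:symmetric trivialization}, a symmetric unitary trivialization $\Phi$ of $u^*\xi$ over $S^2\setminus\Gamma$ together with symmetric holomorphic polar coordinates $\phi_z$ at the punctures (so $\phi_z(s,-t)=I\circ\phi_z(s,t)$ at a boundary puncture $z\in\Gamma_\p$, and $\phi_{\bar z}(s,t)=I\circ\phi_z(s,-t)$ for a conjugate pair $z\in\Gamma_o$, $\bar z\in\Gamma\cap\mathring D_-$), and then to compare the contributions to $\wind_\infty(\tilde u)$ and to $\wind_\infty(\tilde u_I)$ puncture by puncture. Because $\tilde\rho=\Id_\R\times\rho$, the map $I$ preserves the sign of a puncture, so $\Gamma^{\pm}$ decomposes into the boundary punctures in $\Gamma_\p$, the interior ones in $\mathring D_+$, and their conjugates in $\mathring D_-$. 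Hence it is enough to prove (i) $\wind^\Phi_\infty(\tilde u;z)=2\,\wind^\Phi_\infty(\tilde u_I;z)$ for $z\in\Gamma_\p$, and (ii) $\wind^\Phi_\infty(\tilde u;\bar z)=\wind^\Phi_\infty(\tilde u;z)$ for $z\in\Gamma_o$; summing over $\Gamma^+$ and over $\Gamma^-$ and subtracting then yields the claim, since at interior punctures $\wind^\Phi_\infty(\tilde u_I;z)=\wind^\Phi_\infty(\tilde u;z)$ by definition.

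The whole argument reduces to one algebraic identity, which I would establish first. Writing the complex-linear section $\pi\circ Tu$ in these trivializations as a scalar — $\gamma^\Phi_z(w)=$ multiplication by $c(w)\in\C$ — I would check that
$$
c(\bar w)=\overline{c(w)}.
$$
This comes from chasing three conjugations through the definition of $\gamma^\Phi_z$: the antilinear differential $TI$ on the domain (in polar coordinates simply $(s,t)\mapsto(s,-t)$), the fibrewise identity $\pi\circ Tu(\bar w)=T\rho\circ\pi\circ Tu(w)\circ(TI)^{-1}$ coming from $u\circ I=\rho\circ u$ together with $\pi\circ T\rho=T\rho\circ\pi$ (the latter immediate from $\rho^*\alpha=-\alpha$ and $\rho^*X=-X$), and the symmetry of the trivialization $\Phi(\bar w)^{-1}\circ T\rho|_\xi\circ\Phi(w)=\mathrm{conj}$ from Lemma~\ref{Lemma:symmetric trivialization}. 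Keeping straight which conjugation acts on the domain, which on the contact fibre, and which on the trivializing copy of $\C$, is the main (and essentially the only) obstacle; the rest is formal.

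Granting $c(\bar w)=\overline{c(w)}$, the rotation map $\rho^\Phi_z(s)$ of \eqref{eq:rot map} satisfies $\rho^\Phi_z(s)(1-t)=\overline{\rho^\Phi_z(s)(t)}$ at a boundary puncture; in particular its values at $t=0$ and $t=\tfrac12$ are real, and its change of argument over $[\tfrac12,1]$ equals that over $[0,\tfrac12]$, because the second arc is the conjugate of the first traversed backwards. Passing to the limit $s\to\infty$ gives (i). For a conjugate pair of interior punctures the same identity gives $\rho^\Phi_{\bar z}(s)(t)=\overline{\rho^\Phi_z(s)(-t)}$, and since reversing orientation and conjugating each flip the winding of a loop, (ii) follows. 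As a cross-check: by Proposition~\ref{Prop:identity betn wind numbers} one has $\wind_\infty(\tilde u)=\wind_\pi(\tilde u)+2-\#\Gamma$, while $\wind_\pi(\tilde u_I)=\tfrac12\wind_\pi(\tilde u)$ by definition, so the asserted equality is equivalent to a half-sphere Euler-characteristic count $\wind_\infty(\tilde u_I)=\wind_\pi(\tilde u_I)+1-\tfrac12\#\Gamma$; I would still favour the direct puncture-by-puncture route above, since it does not require redoing that count for surfaces with boundary.
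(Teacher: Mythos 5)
Your proposal is correct and essentially reproduces the paper's argument: the paper likewise derives the conjugation symmetry $\rho^\Phi_z(s,-t)=I\circ\rho^\Phi_z(s,t)$ at boundary punctures from $T\rho\circ Tu=Tu\circ TI$ and the symmetric trivialization, concludes $\wind^\Phi_\infty(\tilde u;z)=2\wind^\Phi_\infty(\tilde u_I;z)$ there, and handles punctures off $\Fix I$ by observing they come in conjugate pairs with equal winding. You spell out the underlying identity $c(\bar w)=\overline{c(w)}$ in more detail, but the route is the same.
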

\begin{proof}
Since $T\rho\circ Tu=Tu\circ TI$ and all involved maps respect symmetry, we have
$$
\rho_z^\Phi(s,-t)=I\circ\rho_z^\Phi(s,t)
$$
for $(s,t)\in\R_+\x S^1$ and $z\in\Gamma_\p$ and therefore $2\wind^\Phi_\infty(\tilde u_I;z)=\wind^\Phi_\infty(\tilde u;z)$. Moreover, since punctures in $\Gamma\setminus\Fix I$ appear in pairs with the same winding number, the claim is proved.
\end{proof}

The following corollary is a direct consequence of Proposition \ref{Prop:identity betn wind numbers} and Proposition \ref{Prop:wind=2 relative wind}.
\begin{Cor}\label{cor:identity for winding numbers}
For $\tilde u$ in Proposition \ref{Prop:wind=2 relative wind}, we have
$$
\wind_\infty(\tilde u_I)=\wind_\pi(\tilde u_I)+\frac{1}{2}(2-\#\Gamma).
$$
\end{Cor}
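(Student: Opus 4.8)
The plan is to combine the three facts already established in this subsection, applied to the invariant finite energy sphere $\tilde u$ and its associated finite energy half-sphere $\tilde u_I$. First I would apply Proposition~\ref{Prop:identity betn wind numbers} to the finite energy sphere $\tilde u=(a,u):S^2\setminus\Gamma\to\R\x M$ — which by the standing assumptions has nondegenerate asymptotic periodic orbits and satisfies $\pi\circ Tu\neq 0$ — to obtain
$$
\wind_\infty(\tilde u)=\wind_\pi(\tilde u)+(2-\#\Gamma).
$$
Next I would invoke Proposition~\ref{Prop:wind=2 relative wind} to rewrite the left-hand side as $\wind_\infty(\tilde u)=2\,\wind_\infty(\tilde u_I)$. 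Finally I would recall the definition $\wind_\pi(\tilde u_I):=\wind_\pi(\tilde u)/2$ made just before the corollary (together with the observation there that this coincides with the sum of the half-degrees of the zeros of $\pi\circ Tu_I$ in $D_+$), so that $\wind_\pi(\tilde u)=2\,\wind_\pi(\tilde u_I)$.

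Substituting both relations into the identity of Proposition~\ref{Prop:identity betn wind numbers} gives
$$
2\,\wind_\infty(\tilde u_I)=2\,\wind_\pi(\tilde u_I)+(2-\#\Gamma),
$$
and dividing by $2$ yields $\wind_\infty(\tilde u_I)=\wind_\pi(\tilde u_I)+\frac{1}{2}(2-\#\Gamma)$, which is the claim.

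I do not expect any genuine obstacle here: the corollary is a purely formal bookkeeping consequence of the two propositions and the definition of $\wind_\pi(\tilde u_I)$. The only points worth a sentence of care are that the $\#\Gamma$ appearing in Proposition~\ref{Prop:identity betn wind numbers} is the total number of punctures of $\tilde u$ on all of $S^2$ — the very quantity appearing in the statement of the corollary — so that no translation between $\#\Gamma$ and $\#\Gamma_I$, $\#\Gamma_\p$, $\#\Gamma_o$ is needed, and that one should note $\tilde u$ indeed meets the hypotheses of Proposition~\ref{Prop:identity betn wind numbers}, which it does since these are exactly the standing assumptions on the invariant finite energy spheres considered here. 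Should one wish to bypass Proposition~\ref{Prop:wind=2 relative wind}, the same conclusion would follow by running its symmetry argument directly inside the proof (conjugate pairs of off-axis punctures and of off-axis zeros contribute equally, while a boundary puncture contributes twice its relative winding and an on-axis zero is counted with half-degree); but since both propositions are already in hand, the three-line substitution above is the cleanest route.
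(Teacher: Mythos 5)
Your proof is correct and matches the paper's reasoning exactly: the paper presents the corollary as a direct consequence of Proposition~\ref{Prop:identity betn wind numbers}, Proposition~\ref{Prop:wind=2 relative wind}, and the definition $\wind_\pi(\tilde u_I)=\wind_\pi(\tilde u)/2$, which is precisely the three-line substitution you carry out.
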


A well known fact is that parity of $\mu^\Phi_{\CZ}(P)$ for a periodic orbit $(P,T)$ remains unchanged under a change of trivialization of $\Phi$ of $\xi_P$. So we can denote by 
$$
p(P)\in\{0,1\}
$$ 
the parity of $\mu^\Phi_{\CZ}(P)$ for any trivialization $\Phi$, i.e. $p(P)=1$ if $\mu^\Phi_{\CZ}(P)$ is odd and $p(P)=0$ otherwise, and $\Gamma=\Gamma_\mathrm{odd}(\tilde u)\sqcup\Gamma_\mathrm{even}(\tilde u)$ where $\Gamma_\mathrm{odd}(\tilde u)$ resp. $\Gamma_\mathrm{even}(\tilde u)$ is the set of punctures with odd resp. even indices. Like the winding number the total index of a finite energy sphere $\tilde u:S^2\setminus\Gamma\to\R\x M$
$$
\mu(\tilde u):=\sum_{z\in\Gamma^+}\mu_\CZ^\Phi(P_z)-\sum_{z\in\Gamma^-}\mu_\CZ^\Phi(P_z)
$$
where $P_z$ is an asymptotic periodic orbit at $z\in\Gamma$, does not depend on the choice of the trivialization $\Phi$ of $u^*\xi$. 
 
Using the asymptotic representation of $\tilde u$, we are able to compare the winding number of $\tilde u$ at $z\in\Gamma$ with the index of the corresponding asymptotic periodic orbit $(P,T)$.

\begin{Prop}\cite{HWZ95b} \label{Prop:wind and index}
Let $\tilde u=(a,u):S^2\setminus\Gamma\to\R\x M$ be a finite energy sphere which converges to a nondegenerate periodic  orbit $(P,T)$ at $z\in\Gamma$. It holds that for any unitary trivialization $\Phi$ of $u^*\xi$,
$$
\wind_\infty^\Phi(\tilde u;z)\leq\frac{1}{2}\big(\mu^\Phi_{\CZ}(P)-p(P)\big),\quad z\in\Gamma^+
$$
and 
$$
\wind_\infty^\Phi(\tilde u;z)\geq\frac{1}{2}(\mu^\Phi_{\CZ}(P)+p(P)),\quad z\in\Gamma^-.
$$
In consequence, we have
$$
\mu(\tilde u)\geq 2\wind_\infty(\tilde u)+\#\Gamma_\mathrm{odd}(\tilde u)
$$
If in particular $\tilde u$ is a finite energy plane, $\mu_\CZ(P)\geq 2$.
\end{Prop}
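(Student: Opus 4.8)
The plan is to reduce the claim, one puncture at a time, to the spectral properties of the asymptotic operator collected in Lemma~\ref{Lemma:properties of wind}, by means of the asymptotic representation of Theorem~\ref{thm:asymptotic formula}. Fix a nonremovable puncture $z\in\Gamma$ with nondegenerate asymptotic orbit $(P_z,T)$ and a unitary trivialization $\Phi$ of $u^*\xi$, and recall the standing assumption $\pi\circ Tu\not\equiv 0$ near $z$, so that $\tilde u|_{\UU(z)}$ is not a trivial cylinder and the local winding number is defined. The first step is to establish
$$
\wind_\infty^\Phi(\tilde u;z)=w(\lambda,S_{P_z}),
$$
where $w(\lambda,S_{P_z})$ is the winding number of the eigenfunction governing the exponential decay rate $\lambda$ in Theorem~\ref{thm:asymptotic formula} (equivalently, of the corresponding eigenfunction of $A_{S_{P_z}}$ read in $\Phi$). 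Differentiating $u\circ\phi_z(s,t)=\exp_{P_z(Tt)}[e^{\lambda s}(e(t)+r(s,t))]$ and projecting by $\pi$, the fact that $r$ decays with all derivatives shows that the leading behavior of $\pi\circ\partial_s(u\circ\phi_z)(s,t)$ is $\lambda e^{\lambda s}e(t)$; read off in the trivialization $\Phi$ — the factor $e^{\pm it}$ in the definition of $\rho^\Phi_z$ being included precisely so that $\wind_\infty^\Phi(\tilde u;z)$ records the winding of $\pi\circ Tu$ relative to $\Phi$ and not that of the holomorphic polar coordinate — the loop $\rho^\Phi_z(s)$ converges, after rescaling, to a nowhere vanishing loop with the same winding as $t\mapsto\Phi(t)^{-1}e(t)$. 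Since $e$ is nowhere zero, the limit defining $\wind_\infty^\Phi(\tilde u;z)$ exists and equals $w(\lambda,S_{P_z})$.

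The second step is purely spectral. By Theorem~\ref{thm:asymptotic formula}, $\lambda<0$ when $z\in\Gamma^+$ and $\lambda>0$ when $z\in\Gamma^-$, and $0\notin\sigma(A_{P_z})$ by nondegeneracy. Writing $\mu_{\CZ}^\Phi(P_z)=\mu(S_{P_z})=2\alpha(S_{P_z})+p(S_{P_z})$ with $p(S_{P_z})=p(P_z)$, the monotonicity of winding numbers in eigenvalues (Lemma~\ref{Lemma:properties of wind}) gives, for $z\in\Gamma^+$, $w(\lambda,S_{P_z})\le\alpha(S_{P_z})=\frac12\big(\mu_{\CZ}^\Phi(P_z)-p(P_z)\big)$, and for $z\in\Gamma^-$, $w(\lambda,S_{P_z})\ge\min\{w(\mu,S_{P_z}):\mu\in\sigma(A_{P_z})\cap[0,\infty)\}=\alpha(S_{P_z})+p(S_{P_z})=\frac12\big(\mu_{\CZ}^\Phi(P_z)+p(P_z)\big)$. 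These are precisely the two stated inequalities.

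For the global consequence I would rewrite the two estimates, using that $\frac12(\mu\pm p)\in\Z$, as $\mu_{\CZ}^\Phi(P_z)\ge 2\wind_\infty^\Phi(\tilde u;z)+p(P_z)$ for $z\in\Gamma^+$ and $-\mu_{\CZ}^\Phi(P_z)\ge -2\wind_\infty^\Phi(\tilde u;z)+p(P_z)$ for $z\in\Gamma^-$, and sum over all $z\in\Gamma$; in view of the definitions of $\mu(\tilde u)$, $\wind_\infty(\tilde u)$ and $\Gamma_{\mathrm{odd}}(\tilde u)$ this yields exactly $\mu(\tilde u)\ge 2\wind_\infty(\tilde u)+\#\Gamma_{\mathrm{odd}}(\tilde u)$. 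Finally, if $\tilde u$ is a finite energy plane then $\Gamma$ is a single positive puncture (maximum principle), so Proposition~\ref{Prop:identity betn wind numbers} gives $\wind_\infty(\tilde u)=\wind_\pi(\tilde u)+1\ge 1$ because $\wind_\pi(\tilde u)\ge 0$; hence $\mu_{\CZ}(P)=\mu(\tilde u)\ge 2\wind_\infty(\tilde u)+\#\Gamma_{\mathrm{odd}}(\tilde u)\ge 2$.

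The only step that is not routine bookkeeping is the identification $\wind_\infty^\Phi(\tilde u;z)=w(\lambda,S_{P_z})$ of the first paragraph, and this is where the fine asymptotic analysis of Hofer, Wysocki and Zehnder is essential: one needs the exponential convergence of $r(s,t)$ together with all its derivatives so that the leading eigenfunction term genuinely dominates $\pi\circ Tu$ near $z$, and one needs the trivial-cylinder case excluded for the winding to be defined at all. I would quote \cite{HWZ95b,HWZ96} for this input rather than reprove it; everything else uses only the index and winding-number properties already set up in this section.
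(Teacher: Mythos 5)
The paper does not prove this proposition: it is cited directly to \cite{HWZ95b}, and the text immediately afterwards says the proof of the half-sphere analogue (Proposition~\ref{Prop:relative wind and index}) ``imitates'' it. Your reconstruction is correct and is exactly the method the paper then employs for that half-sphere analogue: identify $\wind_\infty^\Phi(\tilde u;z)$ with the winding $w(\lambda,S_{P_z})$ of the asymptotic eigenfunction via Theorem~\ref{thm:asymptotic formula}, then use the sign of $\lambda$ and the monotonicity of $w(\cdot,S)$ from Lemma~\ref{Lemma:properties of wind} together with $\mu(S)=2\alpha(S)+p(S)$ to obtain the two pointwise bounds, and sum. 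The final step, feeding Proposition~\ref{Prop:identity betn wind numbers} and $\wind_\pi\ge 0$ back in for a plane, matches the argument the paper later uses in the invariant setting (Proposition~\ref{Prop:relative wind and index} and its proof of $\mu_\RS(C)\ge 3/2$). One cosmetic remark: the identification $\wind_\infty^\Phi(\tilde u;z)=w(\lambda,S_{P_z})$ is the one step that genuinely requires the HWZ asymptotic analysis, and you are right to quote \cite{HWZ95b,HWZ96} for it rather than reprove it; the paper does the same in its proof of Proposition~\ref{Prop:relative wind and index}, asserting $\wind^\Phi_\infty(\tilde u_I;z)=w(e^\Phi_\rho,\lambda_\mp,D_\Phi)$ without further justification.
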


Imitating the above proposition, we prove the corresponding result for finite energy half-spheres. The total index of a finite energy half-sphere $\tilde u_I$ is 
$$
\mu(\tilde u_I):=\sum_{z\in\Gamma_\p^+}\mu^\Phi_{\RS}(C_z)+\sum_{z\in\Gamma_o^+}\mu_\CZ^\Phi(P_z)-\sum_{z\in\Gamma_\p^-}\mu_\RS^\Phi(C_z)
-\sum_{z\in\Gamma_\p^-}\mu_\CZ^\Phi(P_z)
$$
where $P_z$ and $C_z$ are asymptotic periodic orbits and chords at $z\in\Gamma_I=\Gamma_o\sqcup\Gamma_\p$ respectively, and again this is independent of the choice of symmetric trivializations $\Phi$ of $u^*\xi$.

\begin{Prop}\label{Prop:relative wind and index}
Let $\tilde u=(a,u):S^2\setminus\Gamma\to\R\x M$ be an invariant finite energy sphere. If $\tilde u$ converges to a nondegenerate symmetric periodic orbit $(P,T)$ at $z\in\Gamma_\p$,  we have 
$$
\wind^\Phi_\infty(\tilde u_I;z)\leq\frac{1}{2}\Big(\mu^\Phi_{\RS}(C)-\frac{1}{2}\Big),\quad z\in\Gamma^+_\p
$$
and 
$$
\wind^\Phi_\infty(\tilde u_I;z)\geq\frac{1}{2}\Big(\mu^\Phi_{\RS}(C)+\frac{1}{2}\Big),\quad z\in\Gamma^-_\p
$$
where $(C,\frac{T}{2})$ is the half-chord of $(P,T)$. In consequence, we have 
$$
\mu(\tilde u_I)\geq 2\,\wind_\infty(\tilde u_I)+\frac{\#\Gamma_\p}{2}+\#(\Gamma_{o}\cap\Gamma_\mathrm{odd}).
$$
If in particular $\tilde u$ is an invariant finite energy plane, $\mu_\RS(C)\geq \frac{3}{2}$.
\end{Prop}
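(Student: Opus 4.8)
The plan is to mirror the proof of Proposition \ref{Prop:wind and index} (the finite-energy-sphere case from \cite{HWZ95b}) but replacing every winding-number comparison by its relative analogue, and every $\mu_\CZ$ by $\mu_\RS$. First I would recall the asymptotic representation for $\tilde u_I$ near a boundary puncture $z\in\Gamma_\p$ from Corollary \ref{Cor:asymptotic formula for invariant finite energy planes}: in symmetric holomorphic polar coordinates $\phi_z$ and a symmetric unitary trivialization $\Phi$ as in Lemma \ref{Lemma:symmetric trivialization}, we have
$$
u_I\circ\phi_z(s,t)=\exp_{C(Tt)}[e^{\lambda_\rho s}(e_\rho(t)+r_\rho(s,t))],
$$
where $e_\rho$ is an eigenfunction of $A_C$ belonging to an eigenvalue $\lambda_\rho$, negative at a positive puncture and positive at a negative puncture, and $r_\rho\to0$. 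The key point is that, after trivializing, the winding of $\rho^\Phi_z(s)$ on $[0,1/2]$ converges as $s\to\infty$ to the relative winding number $w(\lambda_\rho,D_C)$ of the eigenfunction $e_\rho$, exactly as in the closed case the asymptotic winding of $\tilde u$ at $z$ equals $w(\lambda,S_P)$. Thus $\wind^\Phi_\infty(\tilde u_I;z)=w(\lambda_\rho,D_C)$.

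Next I would invoke the arrangement of the spectrum of $A_C=A_{D_C}$ established earlier (equation \eqref{eq:arrangement of eigenvalues II} and the monotonicity $\lambda_k<\lambda_{k+1}$ with $w(\lambda_k,D_C)=k/2$), together with the formula $\mu_I(D_C)=\max\{k\in\Z\,|\,\lambda_k(D_C)<0\}+\tfrac12$ and the identification $\mu^\Phi_\RS(C)=\mu_I(D_C)$. For a positive puncture, $\lambda_\rho<0$, so $w(\lambda_\rho,D_C)=k/2$ for some $k$ with $\lambda_k<0$, hence $k\le\mu^\Phi_\RS(C)-\tfrac12$ and therefore $\wind^\Phi_\infty(\tilde u_I;z)=k/2\le\tfrac12(\mu^\Phi_\RS(C)-\tfrac12)$. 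For a negative puncture, $\lambda_\rho>0$, so $w(\lambda_\rho,D_C)=k/2$ with $\lambda_k\ge0$, i.e. $k\ge\mu^\Phi_\RS(C)+\tfrac12$, giving the reversed inequality. For interior punctures $z\in\Gamma_o$ the original estimate of Proposition \ref{Prop:wind and index} applies verbatim since $\wind^\Phi_\infty(\tilde u_I;z)=\wind^\Phi_\infty(\tilde u;z)$ and the corresponding orbit is an ordinary nondegenerate orbit $P_z$. Summing the positive-puncture inequalities and subtracting the negative-puncture ones, and using that $\mu^\Phi_\CZ(P_z)-p(P_z)$ is even while $\mu^\Phi_\CZ(P_z)\ge 2\wind+p$ can be sharpened to $\ge 2\wind+1$ exactly when $p(P_z)=1$, yields
$$
\mu(\tilde u_I)\ge 2\wind_\infty(\tilde u_I)+\frac{\#\Gamma_\p}{2}+\#(\Gamma_o\cap\Gamma_{\mathrm{odd}}),
$$
where the term $\#\Gamma_\p/2$ collects the $\tfrac12\cdot\tfrac12\cdot 2$ contributions of each boundary puncture (each boundary puncture contributes $\tfrac14$ from the positive side and $\tfrac14$ from the negative side in the gap between $\wind$ and $\tfrac12\mu_\RS$).

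For the final clause: if $\tilde u$ is an invariant finite energy plane, then $\Gamma=\{\infty\}$, so $\Gamma_\p=\{\infty\}$, $\Gamma_o=\emptyset$, $\#\Gamma=1$, and $\tilde u_I$ is a finite energy half-plane over $\H$ with a single positive boundary puncture. Then $\ZZ(\pi\circ Tu_I)$ consists of zeros of positive (half-)degree, so $\wind_\pi(\tilde u_I)\ge0$, and Corollary \ref{cor:identity for winding numbers} gives $\wind_\infty(\tilde u_I)=\wind_\pi(\tilde u_I)+\tfrac12(2-1)\ge\tfrac12$. Combining with the positive-puncture inequality $\wind_\infty(\tilde u_I)\le\tfrac12(\mu_\RS(C)-\tfrac12)$ yields $\mu_\RS(C)\ge\tfrac32$. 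I expect the main obstacle to be the careful bookkeeping in the half-integer index arithmetic — verifying that the relative winding number genuinely equals the asymptotic winding of the trivialized section on the boundary interval $[0,1/2]$ (the subtlety being that $\rho^\Phi_z(s)$ now takes values in $(\C,\R)$ rather than in $\C$, so one must check the boundary condition is compatible with counting half-windings), and then confirming that summing over punctures produces precisely the stated constant $\#\Gamma_\p/2$ rather than something off by a global shift; the inequalities themselves are forced once the spectral picture for $A_{D_C}$ is in hand.
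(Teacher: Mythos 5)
Your proof follows essentially the same route as the paper: trivialize via the symmetric trivialization of Lemma \ref{Lemma:symmetric trivialization}, use the asymptotic representation of Corollary \ref{Cor:asymptotic formula for invariant finite energy planes} to identify $\wind^\Phi_\infty(\tilde u_I;z)$ with the relative winding number $w(\lambda_\rho,D_C)$ of the asymptotic eigenfunction, bound it by the spectral gap using the arrangement $\lambda_k$ (equivalently, using $\alpha_I$ and $\mu_\RS(C)=2\alpha_I+\tfrac12$ as the paper does), sum over punctures together with Proposition \ref{Prop:wind and index} for the interior ones, and then deduce $\mu_\RS(C)\geq\tfrac32$ from $\wind_\pi(\tilde u_I)\geq0$ and Corollary \ref{cor:identity for winding numbers}. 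The only nitpick is the parenthetical claim that each boundary puncture contributes ``$\tfrac14$ from the positive side and $\tfrac14$ from the negative side''; a boundary puncture is either positive or negative (not both) and each contributes exactly $\tfrac12$ to the gap $\mu(\tilde u_I)-2\wind_\infty(\tilde u_I)$, which is what your displayed formula actually uses.
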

\begin{proof}
Let $e_\rho(t)\in\Gamma^{1,2}(\xi_C)$ be an eigenfunction of $A_C$ representing the asymptotic convergence rate of $u_I$, see Corollary \ref{Cor:asymptotic formula for invariant finite energy planes}. Restricting the trivialization $\Phi$ to $C$, we have $\Phi_C(t):\C\to\xi_{C(Tt)}$. If we write $D_\Phi(t)=-J_0\dot\Psi(t)\Psi(t)^{-1}$ where  $\Psi(t)=\Phi_C(t)^{-1}\circ T\phi_{X_\pm}^{Tt}(C(0))|_{\xi_{C}}\circ\Phi_C(t)$, the asymptotic operator $A_C$ is written as 
$$
A_{D_\Phi}=-J_0\frac{\p}{\p t}-D_\Phi(t):W^{1,2}_I([0,\tfrac{1}{2}],\R^2)\subset L^2([0,\tfrac{1}{2}],\R^2)\to L^2([0,\tfrac{1}{2}],\R^2)
$$
with respect to the trivialization  $\Phi_C$. Then 
$$
e^\Phi_\rho\in L^2([0,\tfrac{1}{2}],\R^2),\quad  t\mapsto\Phi_C(t)^{-1}(e_\rho(t))
$$ 
 is an eigenfunction of $A_{D_\Phi}$ belonging to a certain negative/positive eigenvalue $\lambda_\mp\in\R$.
Therefore we have
$$
\wind^\Phi_\infty(\tilde u_I;z)=w(e^\Phi_\rho,\lambda_\mp,D_\Phi),\quad z\in\Gamma_\p^\pm
$$
which in turn implies
$$
\wind^\Phi_\infty(\tilde u_I;z)\leq\alpha_I(D_\Phi),\quad z\in\Gamma^+_\p
$$
and 
$$
\wind^\Phi_\infty(\tilde u_I;z)-\frac{1}{2}\geq\alpha_I(D_\Phi),\quad z\in\Gamma^-_\p.
$$
Since $\mu^\Phi_{\RS}(C)=2\alpha_I(D_\Phi)+\frac{1}{2}$, the first two inequalities are proved. These together with Proposition \ref{Prop:wind and index} show the third inequality. The last assertion concerning an invariant finite energy plane follows from the inequality $\wind_\pi(\tilde u_I)\geq 0$ and Corollary \ref{cor:identity for winding numbers}.
\end{proof}

\subsection{Transversality}

We have chosen antiinvariant almost complex structures $\JJ_{\tilde \rho}$ to consider invariant finite energy spheres. Since this choice is  restrictive, one cannot achieve transversality and this obstructs the study of moduli spaces of finite energy spheres in general. Our idea to get round this difficulty is to use the facts that somewhere injective finite energy half-spheres are regular for a generic $\wt J\in\JJ_{\tilde\rho}$ and that finite energy planes in a symplectization always have index large enough to satisfy automatic transversality.

\begin{Prop} \label{prop:automatic transversality}\cite{HWZ99}
Any finite energy plane $\tilde u:\C\to\R\x M$ with $\mu(\tilde u)\in\{2,3\}$ is regular for every compatible cylindrical $\wt J$.
\end{Prop}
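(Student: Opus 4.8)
The plan is to reconstruct the argument of \cite{HWZ99}: reduce the surjectivity of the full linearized Cauchy--Riemann operator along $\tilde u$ to the surjectivity of a real-linear Cauchy--Riemann operator on a complex line bundle over $\C$, and then exclude a nontrivial cokernel by Carleman's similarity principle together with the winding-number bookkeeping of Section~4. First I would fix exponentially weighted Sobolev spaces whose weight is adapted to the asymptotic decay of $\tilde u$ furnished by Theorem~\ref{thm:asymptotic formula}; since $\mu(\tilde u)=\mu_\CZ(P)$ is defined, the asymptotic periodic orbit $P$ is nondegenerate, so the asymptotic operator governing $D_{\tilde u}$ at the puncture is the nondegenerate operator $A_{S_P}$. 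Picking any unitary trivialization of $u^*\xi$ (which exists because $\C$ is contractible), the pullback bundle $\tilde u^*T(\R\x M)$ splits as a sum of complex line bundles $\C\langle\p_a,X\rangle\oplus u^*\xi$, the first summand carrying the tautological holomorphic structure along the $\wt J$-holomorphic map $\tilde u$ and the second being the ``normal'' line bundle. With respect to this splitting $D_{\tilde u}$ is block triangular, and the diagonal block on the trivial factor $\C\langle\p_a,X\rangle$ is surjective since the symplectization and Reeb directions are unconstrained; hence $D_{\tilde u}$ is surjective if and only if the normal block, a real-linear Cauchy--Riemann operator $D_N$ on sections of $u^*\xi$, is surjective. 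Note that this reduction requires no somewhere-injectivity of $\tilde u$.

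By the Riemann--Roch index formula, computed in the chosen trivialization (so that the relative first Chern number of $u^*\xi$ vanishes), $\ind D_N=\chi(\C)+\mu_\CZ(P)=\mu_\CZ(P)+1=2\alpha(S_P)+p(S_P)+1$, so it remains to prove $\coker D_N=\{0\}$. I would identify $\coker D_N$ with the kernel of the formal $L^2$-adjoint $D_N^{\dagger}$, which after conjugation is again a real-linear Cauchy--Riemann operator on a line bundle over $\C$, now with asymptotic winding numbers opposite to those of $A_{S_P}$. If $\eta\in\ker D_N^{\dagger}$ is nonzero, Carleman's similarity principle forces $\eta$ to have only isolated zeros, each of strictly positive local degree, so the zero count $\wind_\pi(\eta)$ is $\ge 0$; and near the puncture $\eta$ is asymptotic to an eigenfunction, hence has a well-defined asymptotic winding $\wind_\infty(\eta)$ which the adjoint boundary behavior expresses through $\alpha(S_P)$ and $p(S_P)$.

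Feeding these two facts into the argument-principle identity relating $\wind_\infty$ and $\wind_\pi$ on the once-punctured sphere (the analogue of Proposition~\ref{Prop:identity betn wind numbers}) yields a numerical inequality that cannot hold once $\alpha(S_P)\le 1$. Since $\mu(\tilde u)=\mu_\CZ(P)\in\{2,3\}$ is precisely the condition $\alpha(S_P)=1$ (with $p(S_P)\in\{0,1\}$), we conclude $\eta\equiv 0$, hence $\coker D_N=\{0\}$ and $D_{\tilde u}$ is surjective. The step I expect to cost the most care is exactly this last one: pinning down the correct degree/winding constant for the adjoint operator and verifying that the inequality genuinely closes in the extremal case $\mu_\CZ(P)=3$, the case $\mu_\CZ(P)=2$ being the most robust.
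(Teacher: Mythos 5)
The paper does not prove this proposition: it is stated as a cited result from \cite{HWZ99}, with no argument given. So I can only compare your sketch against the known Hofer--Wysocki--Zehnder argument (see also Wendl's later automatic-transversality criterion \cite{Wen10}), and at that level you have reproduced the right overall strategy: reduce to a rank-one real-linear Cauchy--Riemann operator over $\C$, then exclude a cokernel by combining the Carleman similarity principle (isolated zeros of positive local degree) with winding-number bounds coming from the asymptotic operator. Two points, though, deserve to be flagged.

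First, the ``block triangular'' reduction to a normal operator on $u^*\xi$ is stated more freely than it should be. The splitting $\tilde u^*T(\R\times M)=\C\langle\p_a,X\rangle\oplus u^*\xi$ is a splitting of the ambient pullback, not of $T\tilde u\oplus(\text{normal})$: the image of $T\tilde u$ does not coincide with $\C\langle\p_a,X\rangle$ pointwise, and in general $\tilde u$ may have critical points, so $u^*\xi$ need not be the geometric normal bundle of an immersion. HWZ actually work with the projected section $\pi\circ Tu$ and its zero set, showing it solves a Cauchy--Riemann--type equation and hence has only isolated positive zeros, which is where $\wind_\pi$ comes from; Wendl's criterion absorbs the critical points into a term $Z(du)$. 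In your situation this is benign, but only because of a fact you should make explicit: for a finite energy plane, $\wind_\pi(\tilde u)\geq 0$ forces $\wind_\infty(\tilde u)\geq 1$, and Proposition~\ref{Prop:wind and index} then forces $\wind_\infty(\tilde u)=1$ and $\wind_\pi(\tilde u)=0$ once $\mu_\CZ(P)\in\{2,3\}$. Thus $\pi\circ Tu$ is nowhere vanishing, $\tilde u$ is immersed and transverse to the Reeb direction, and $u^*\xi$ really is (isomorphic to) the normal bundle. Without this observation your reduction has a gap.

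Second, the decisive step --- deriving a contradiction for a nonzero $\eta\in\ker D_N^{\dagger}$ --- is only gestured at, and you say so yourself. That step is the whole content of the proposition: one must pin down the weight (so that the Fredholm index of $D_N$ in the weighted space is the correct one, $\mu_\CZ(P)-1$ after normalizing away the automorphism directions, not $\mu_\CZ(P)+1$ as your Riemann--Roch line suggests before accounting for $\C\langle\p_a,X\rangle$ and the reparametrization kernel), identify the asymptotic winding of the adjoint (whose eigen-windings are the negatives of those of $A_{S_P}$, shifted by the one-form twist), and verify that $\wind_\infty(\eta)\geq 0$ from the argument principle contradicts the asymptotic bound exactly when $\alpha(S_P)=1$. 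This bookkeeping is where the two cases $\mu_\CZ=2$ (even parity, $p=0$) and $\mu_\CZ=3$ (odd parity, $p=1$) differ and both must be checked; the sketch as written does not yet close either one.
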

For a further study on automatic transversality for finite energy spheres, we refer the reader to Wendl's work \cite{Wen10}.

Recall that a finite energy sphere $\tilde u:S^2\setminus\Gamma\to\Xi$ is called somewhere injective if there is a so called injective point $z\in S^2\setminus\Gamma$ such that $d\tilde u(z)\neq0$ and $\tilde u^{-1}(\tilde u(z))=\{z\}$.

\begin{Def}
A finite energy half-sphere $\tilde u_I:(D_+,\Fix I)\setminus\Gamma_I\to(\Xi,\Fix\tilde\rho)$ is called {\em somewhere injective} if its double $\tilde u: S^2\setminus\Gamma\to\Xi$ is somewhere injective. A point $z\in D_+\setminus\Gamma_I$ is called an {\em injective point} of $\tilde u_I$ if 
$$
d\tilde u_I(z)\neq0,\quad \tilde u_I^{-1}(\tilde u_I(z))=\{z\},\quad 
\tilde u_I^{-1}(\tilde\rho\circ\tilde u_I( z))=\left\{\begin{array}{ll} \{z\} \quad & z\in\Fix I, \\[1ex] \;\;\emptyset & z\notin\Fix I.\end{array}\right.
$$
\end{Def}

Note that if $\tilde u_I$ is somewhere injective, $\im\,\tilde u_I\neq \im\,(\tilde\rho\circ\tilde u_I)$. 

\begin{Lemma}
If a finite energy half-sphere $\tilde u_I:(D_+,\Fix I)\setminus\Gamma_I\to(\Xi,\Fix\tilde\rho)$ is somewhere injective, the set of injective points is open and dense.
\end{Lemma}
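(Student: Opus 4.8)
The plan is to reduce everything to the corresponding statement for the double $\tilde u:S^2\setminus\Gamma\to\Xi$, which is a somewhere injective finite energy sphere by the definition of somewhere injectivity for half-spheres. By the standard structure theory of somewhere injective pseudoholomorphic curves (critical points are discrete by the Carleman similarity principle, and away from them self-intersections are isolated; see \cite{HWZ95b}), the set $\mathcal I(\tilde u)$ of injective points of $\tilde u$ is open and dense in $S^2\setminus\Gamma$. I would take this as the input and transfer it to $D_+$.

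The first step is to observe that $\mathcal I(\tilde u)$ is invariant under $I$. This follows from $\tilde u\circ I=\tilde\rho\circ\tilde u$ together with the fact that $\tilde\rho$ is a diffeomorphism: for $z\in\mathcal I(\tilde u)$ one gets $\tilde u^{-1}(\tilde u(\bar z))=\tilde u^{-1}(\tilde\rho(\tilde u(z)))=I(\tilde u^{-1}(\tilde u(z)))=\{\bar z\}$, and $d\tilde u(\bar z)$ is conjugate to $d\tilde u(z)$ via $T\tilde\rho$ and $TI$, hence nonzero; so $\bar z\in\mathcal I(\tilde u)$.

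The main step is the identification
\[
\{\text{injective points of }\tilde u_I\}\;=\;\mathcal I(\tilde u)\cap\big(D_+\setminus\Gamma_I\big).
\]
For the inclusion $\supseteq$, given $z\in\mathcal I(\tilde u)$ lying in $D_+\setminus\Gamma_I$ one verifies the three defining conditions of an injective point of $\tilde u_I$: $d\tilde u_I(z)\neq0$ is immediate since $d\tilde u(z)$ is a nonzero complex-linear map; $\tilde u_I^{-1}(\tilde u_I(z))\subseteq\tilde u^{-1}(\tilde u(z))=\{z\}$; and for the antiinvariant condition one uses $\tilde\rho(\tilde u_I(z))=\tilde u(\bar z)$ together with $\bar z\in\mathcal I(\tilde u)$ from Step 1, distinguishing the case $z\in\mathring D_+$ (then $\bar z\in\mathring D_-$, so $\tilde u_I^{-1}(\tilde\rho\circ\tilde u_I(z))\subseteq\{\bar z\}\cap D_+=\emptyset$) from the case $z\in\Fix I$ (then $\bar z=z$ and the set equals $\{z\}$). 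For the inclusion $\subseteq$, if $z$ is an injective point of $\tilde u_I$ and $w\in\tilde u^{-1}(\tilde u(z))$, then $w\in D_+$ forces $w=z$ by injectivity of $\tilde u_I$, while $w\in\mathring D_-$ would give $\bar w\in D_+\setminus\Gamma_I$ with $\tilde u_I(\bar w)=\tilde\rho\circ\tilde u_I(z)$, contradicting the antiinvariant injectivity condition at $z$ (in the subcase $z\in\Fix I$ it forces $w=z\in\mathring D_-\cap\Fix I=\emptyset$); hence $\tilde u^{-1}(\tilde u(z))=\{z\}$ and $z\in\mathcal I(\tilde u)$.

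Finally, openness of the set of injective points of $\tilde u_I$ in $D_+\setminus\Gamma_I$ is inherited from openness of $\mathcal I(\tilde u)$ in $S^2\setminus\Gamma$ via the relative topology, and density follows because every nonempty relatively open subset of $D_+\setminus\Gamma_I$ contains a nonempty open subset of $\mathring D_+\subset S^2\setminus\Gamma$, which meets the dense set $\mathcal I(\tilde u)$. The step I expect to require the most care is the case analysis near the boundary $\Fix I$ in the main step, where the antiinvariant preimage condition changes form between boundary and interior points; once the classical open-dense statement for the double $\tilde u$ is granted, the rest is formal.
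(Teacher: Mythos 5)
Your proof is correct and follows essentially the same approach as the paper: reduce to the open--dense statement for the double curve $\tilde u$, and transfer it to $D_+\setminus\Gamma_I$ using the relation $\tilde u^{-1}(\tilde\rho\circ\tilde u(z))=I(\tilde u^{-1}(\tilde u(z)))$ to verify the antiinvariant condition in the definition of an injective point of $\tilde u_I$. You additionally spell out the reverse inclusion (that every injective point of $\tilde u_I$ lies in $\II(\tilde u)$), which the paper leaves implicit; this makes the openness of the set of injective points of $\tilde u_I$ explicit, whereas the paper only argues that $\II(\tilde u)|_{D_+}$ is an open dense subset consisting of injective points.
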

\begin{proof}
We recall that the set $\II(\tilde u)$ of injective points of a somewhere injective curve $\tilde u:S^2\setminus\Gamma\to\Xi$ is open and dense, see \cite{HWZ95b}. We claim that the open dense subset $\II(\tilde u)|_{D_+}$ of $D_+$ consists of injective points of $\tilde u_I$. The first two properties are obvious. The last requirement follows from the observation that for $z\in \II(\tilde u)$, $\tilde u^{-1}(\tilde\rho\circ\tilde u( z))=I(\tilde u^{-1}(\tilde u(z)))=\{I(z)\}$.
\end{proof}

Now we prove that a somewhere injective finite energy half-sphere is simple.

\begin{Thm}\label{invariant branched covering}
If an invariant finite energy sphere $\tilde u:(S^2\setminus\Gamma,I)\to(\Xi,\tilde\rho)$ is not somewhere injective, there exist a set of punctures $\underline\Gamma\subset S^2$ with $I(\underline\Gamma)=\underline\Gamma$, a holomorphic map $p:S^2\setminus\Gamma\to S^2\setminus\underline\Gamma$ with  $\deg(p)>1$, and a somewhere injective invariant finite energy sphere 
$$
\underline{\tilde u}:(S^2\setminus\underline\Gamma,I)\to(\Xi,\tilde\rho)
$$ 
satisfying  
$$
p\circ I=I\circ p,\quad\tilde u=\underline{\tilde u}\circ p. 
$$
Furthermore the map $p$ is a complex polynomial with real coefficients.
\end{Thm}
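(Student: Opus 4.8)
The plan is to first invoke the standard factorization of a finite energy sphere which is not somewhere injective through a branched cover and a somewhere injective curve, and then to make this factorization $\tilde\rho$-equivariant by combining the invariance $\tilde\rho\circ\tilde u\circ I=\tilde u$ with the uniqueness, up to M\"obius reparametrization, of the somewhere injective curve underlying $\tilde u$. Normalize $S^2=\C\cup\{\infty\}$ so that $I$ is complex conjugation and (as in the case in which the statement is used, where $\tilde u$ is a finite energy plane) so that $\infty\in\Gamma$ is a positive puncture. By the standard result, cf. \cite{HWZ95b}, there are a finite set $\underline\Gamma_0\subset S^2$, a holomorphic branched cover $p_0\colon S^2\setminus\Gamma\to S^2\setminus\underline\Gamma_0$ of degree $k>1$, extending to a rational map $\bar p_0\colon S^2\to S^2$ with $\Gamma=\bar p_0^{-1}(\underline\Gamma_0)$, and a somewhere injective finite energy sphere $\underline{\tilde u}_0\colon S^2\setminus\underline\Gamma_0\to\Xi$, not a trivial cylinder, with $\tilde u=\underline{\tilde u}_0\circ p_0$; note $\underline{\tilde u}_0$ is $\wt J$-holomorphic because $p_0$ is a local biholomorphism off a finite set.

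Next I would descend the symmetry. Set $\tilde w:=\tilde\rho\circ\underline{\tilde u}_0\circ I\colon S^2\setminus I(\underline\Gamma_0)\to\Xi$, which is again $\wt J$-holomorphic and somewhere injective, and observe that, using $\tilde\rho\circ\tilde u\circ I=\tilde u$, one has $\tilde w\circ(I\circ\bar p_0\circ I)=\tilde\rho\circ\underline{\tilde u}_0\circ\bar p_0\circ I=\tilde\rho\circ\tilde u\circ I=\tilde u$, where $I\circ\bar p_0\circ I$ is a holomorphic branched cover of degree $k$. By uniqueness of the somewhere injective factor of $\tilde u$ there is a M\"obius transformation $\phi$ with $\tilde w=\underline{\tilde u}_0\circ\phi$ and $I\circ\bar p_0\circ I=\phi^{-1}\circ\bar p_0$. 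Writing $\tau:=I\circ\phi^{-1}$, an antiholomorphic self-map of $S^2$, these identities become $\underline{\tilde u}_0=\tilde\rho\circ\underline{\tilde u}_0\circ\tau$, $\bar p_0\circ I=\tau\circ\bar p_0$, and $\tau(\underline\Gamma_0)=\underline\Gamma_0$. Iterating the first identity and using $\tilde\rho^2=\id$ gives $\underline{\tilde u}_0\circ\tau^2=\underline{\tilde u}_0$, hence $\tau^2=\id$ since $\underline{\tilde u}_0$ is injective on a dense open set; and the second identity forces $\bar p_0(\Fix I)\subseteq\Fix\tau$, so $\Fix\tau\ne\emptyset$. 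An antiholomorphic involution of $S^2$ with a fixed point is conjugate to complex conjugation, so $g\tau g^{-1}=I$ for some M\"obius $g$. Replacing $\underline{\tilde u}_0,\underline\Gamma_0,\bar p_0$ by $\underline{\tilde u}:=\underline{\tilde u}_0\circ g^{-1}$, $\underline\Gamma:=g(\underline\Gamma_0)$ and $p:=g\circ\bar p_0|_{S^2\setminus\Gamma}$, a direct check gives $I(\underline\Gamma)=\underline\Gamma$, $\tilde u=\underline{\tilde u}\circ p$, $\underline{\tilde u}=\tilde\rho\circ\underline{\tilde u}\circ I$ (so $\underline{\tilde u}$ is an invariant, somewhere injective finite energy sphere), and $p\circ I=I\circ p$.

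For the last assertion: since $\infty\in\Gamma$ is the unique positive puncture, $\bar p$ maps it to the unique positive puncture of $\underline{\tilde u}$ and is totally ramified there; after a further real M\"obius change of coordinates on the target (one commuting with $I$) we may take that puncture to be $\infty$, so that $\bar p^{-1}(\infty)=\{\infty\}$ with multiplicity $k$ and $\bar p$ is a polynomial of degree $k$. Writing $\bar p(z)=\sum_j a_jz^j$, the equivariance $p\circ I=I\circ p$ reads $\bar p(\bar z)=\overline{\bar p(z)}$, i.e. $a_j=\overline{a_j}$, so the coefficients are real.

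The main obstacle is to have at hand and correctly apply the uniqueness, up to M\"obius reparametrization of the target, of the somewhere injective curve through which $\tilde u$ factors, and then to track holomorphic versus antiholomorphic compositions so that $\tau$ emerges antiholomorphic. The convenient point is that $\tau$ is then automatically an involution with nonempty --- hence circular --- fixed locus, precisely because $\bar p$ carries $\Fix I$ into $\Fix\tau$; this is exactly what rules out the fixed-point-free antipodal involution and permits the normalization $\tau=I$.
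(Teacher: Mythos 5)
Your proof is correct, and it takes a genuinely different route from the paper's. The paper constructs the anticonformal self-map of the underlying sphere directly: it takes $w := \underline{\tilde u}_0$ restricted to its injective locus, considers $w^{-1}\circ\tilde\rho\circ w$, and shows this extends over the non-injective points by removal of singularities; the identity $p\circ I=\underline I\circ p$ is then obtained from almost-everywhere injectivity and unique continuation. You instead obtain the same anticonformal self-map $\tau$ in one stroke from the \emph{uniqueness up to M\"obius reparametrization} of the somewhere injective factor, by noting that $\tilde\rho\circ\underline{\tilde u}_0\circ I$ is another somewhere injective curve through which $\tilde u$ factors. This is cleaner: you avoid the extension-over-$Z(\underline{\tilde u})$ step, and the identities $\tau^2=\mathrm{id}$, $\bar p_0\circ I=\tau\circ\bar p_0$, $\tau(\underline\Gamma_0)=\underline\Gamma_0$ all fall out formally. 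Your argument that $\Fix\tau\neq\emptyset$ --- because $\bar p_0(\Fix I)\subset\Fix\tau$ --- followed by conjugating $\tau$ to $I$ via the classification of real structures on $S^2$, is also more robust than what the paper writes: the paper asserts that involutivity forces $\underline I\in\{I,-I\}$, which is not true for a general anticonformal involution of $\C$ fixing $\infty$ (e.g.\ $z\mapsto i\bar z$ is an involution); what is true is precisely your statement, that it is \emph{conjugate} to $I$ by a M\"obius map. So your route not only works but quietly repairs a small inaccuracy in the paper's own proof.

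Two points worth flagging, neither fatal. First, the "complex polynomial" conclusion really does presuppose that $\Gamma$ is a single (real) puncture, as in the application to finite energy planes; you say so explicitly, and the paper's statement is silently making the same assumption (there is no preferred identification of $S^2\setminus\{\mathrm{pt}\}$ with $\C$ otherwise). Second, when you invoke "uniqueness of the somewhere injective factor," note that this is slightly more than what the paper's cited reference \cite[Section 6]{HWZ95b} literally states (which gives the factorization for finite energy planes); the uniqueness up to biholomorphism is standard and follows for example from the fact that a somewhere injective punctured curve is determined by its image, but it would be worth a word to justify it. Both issues are cosmetic; the argument as a whole is sound.
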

\begin{proof}
Due to \cite[Section 6]{HWZ95b}, there exist an underlying finite energy sphere $\underline{\tilde u}: S^2\setminus\underline\Gamma,\to(\Xi,\wt J)$ and a complex polynomial $p:S^2\setminus\Gamma\to S^2\setminus\underline\Gamma$ of $\deg(p)>1$ such that $\tilde u=\underline{\tilde u}\circ p$.  Let $Z(\underline{\tilde u})$ be the set of noninjective points of $\underline{\tilde u}$, i.e. 
$$
Z(\underline{\tilde u})=\big\{z\in S^2\setminus\underline\Gamma\,\big|\,d\underline{\tilde u}(z)=0 \;\textrm{ or }\; \underline{\tilde u}^{-1}(\underline{\tilde u}(z))\neq\{z\}\big\}.
$$
According to \cite{MS04} together with the asymptotic formula, $Z(\underline{\tilde u})$ is countable and can only accumulate at critical points of $\underline{\tilde u}$ whose cardinality is finite. Since $w:=\underline{\tilde u}|_{S^2\setminus(\underline\Gamma\cup Z(\underline{\tilde u}))}$ is an embedding and its image is invariant under $\tilde\rho$, we consider the involution $w^{-1}\circ\tilde\rho\circ w$ on $S^2\setminus (\underline\Gamma\cup Z(\underline{\tilde u}))$. Using $\tilde\rho^*\wt J=-\wt J$ and $w^*\wt J=i$, we deduce
$$
(w^{-1}\circ\tilde\rho\circ w)^*i=(\tilde\rho\circ w)^*\wt J=-w^*\wt J=-i.
$$
In other words, $w^{-1}\circ\tilde\rho\circ w$ is an anticonformal involution on $S^2\setminus (\underline\Gamma\cup Z(\underline{\tilde u}))$. We think of that 
$$
w^{-1}\circ\tilde\rho\circ w:S^2\setminus (\underline\Gamma\cup Z(\underline{\tilde u}))\to S^2\setminus \underline\Gamma.
$$ 
Due to the asymptotic behavior of $\underline{\tilde u}$, the image of an open neighborhood of $z\in Z(\underline{\tilde u})$ under $w^{-1}\circ\tilde\rho\circ w$ is bounded inside $\Xi$. Therefore points in $Z(\underline{\tilde u})$ are removable singularities and we obtain the extended map 
$$
\underline I:S^2\setminus \underline\Gamma\to S^2\setminus \underline\Gamma,\qquad \underline I|_{S^2\setminus (\underline\Gamma\cup Z(\underline{\tilde u}))}=w^{-1}\circ\tilde\rho\circ w. 
$$
By the unique continuation theorem, $\tilde\rho\circ\underline{\tilde u}=\underline{\tilde u}\circ \underline I$  holds on $S^2\setminus \underline\Gamma$ and $\underline I$ is still anticonformal and involutive.  In a similar vein we have $p\circ I=\underline I\circ p$ since
$$
\underline{\tilde u}(\underline I\circ p(z))=\tilde\rho\circ\underline{\tilde u}(p(z))=\tilde\rho\circ\tilde u(z)=\tilde u(I(z))=\underline{\tilde u}(p\circ I(\bar z))
$$
and $\tilde{\underline u}$ is an embedding almost everywhere.  Since $\underline I$ is anticonformal and proper, it is a complex polynomial composed with the complex conjugation $I$. Moreover involutivity yields that $\underline I$ is either $I$ or $-I$. If the former is the case, we are done. Otherwise, namely $\underline I=-I$, we repeat the argument with $\underline{\tilde u}$ replaced by $\underline{\tilde u}\circ i$. This finishes the proof.
\end{proof}

In the absence of symmetry, transversality results and Fredholm index computations of somewhere injective finite energy spheres are discussed in various articles, see \cite{Dra04,Bou02, Bou06,Wen14}. It is well known that these arguments are easily modified to prove the following statements. Note that the Fredholm index of $\tilde u_I$ is derived from that of the associated invariant finite energy plane $\tilde u$ by taking half of the constant term in the index formula of $\tilde u$ and replacing the Conley-Zehnder index of symmetric periodic orbits by the Robbin-Salamon index of the corresponding half-chords.

\begin{Thm}\label{thm:trasversality}
Let $\tilde u_I:(D_+,\Fix I)\setminus\Gamma_I\to(\Xi,\Fix\tilde\rho)$ be a somewhere injective finite energy half-sphere with nondegenerate asymptotic orbits. For a generic $\wt J\in\JJ_{\tilde\rho}$, $\tilde u_I$ is regular. Moreover the Fredholm index of $\tilde u_I$ is 
\bean
\mathrm{Ind}(\tilde u_I)&=\sum_{z\in\Gamma_\p^+}\mu^\Phi_{\RS}(C_z)+\sum_{z\in\Gamma_o^+}\mu_\CZ^\Phi(P_z)-\sum_{z\in\Gamma_\p^-}\mu_\RS^\Phi(C_z)
-\sum_{z\in\Gamma_\p^-}\mu_\CZ^\Phi(P_z)\\
&\quad+\Big(\frac{\dim\Xi}{2}-3\Big)\frac{(2-\#\Gamma_\p-2\#\Gamma_o)}{2}.
\eea
for a unitary trivialization $\Phi$ of $(\tilde u_I)^*(T\Xi,T\Fix \tilde\rho)$. If a somewhere injective finite energy sphere $\tilde u:S^2\setminus\Gamma\to\Xi$ with nondegenerate asymptotic orbits is not invariant, it is regular for a generic $\wt J\in\JJ_{\tilde\rho}$ with the Fredholm index
$$
\mathrm{Ind}(\tilde u)=\sum_{z\in\Gamma^+}\mu_\CZ^\Phi(P_z)-\sum_{z\in\Gamma^-}\mu_\CZ^\Phi(P_z)
+\Big(\frac{\dim\Xi}{2}-3\Big)(2-\#\Gamma)
$$
for a unitary trivialization $\Phi$ of $\tilde u^*T\Xi$. Note that this transversality result ensures that 
$$
\Ind(\tilde u_I),\quad \Ind(\tilde u)\geq0
$$
in general and furthermore 
$$
\Ind(\tilde u_I),\quad\Ind(\tilde u)\geq1
$$ 
if $(\Xi,\Om,\tilde\rho)$ is the symplectization of a contact manifold $(M,\alpha,\rho)$ and $\pi\circ T\tilde u_I,\,\pi\circ T\tilde u\neq 0$.
\end{Thm}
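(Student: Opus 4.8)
Both transversality assertions follow the standard universal moduli space scheme of \cite{HWZ95b,Dra04,Bou02,Wen14}; the only new ingredient is that a variation $Y$ of $\wt J$ must stay tangent to $\JJ_{\tilde\rho}$, i.e. be $\tilde\rho$-antiinvariant, $T\tilde\rho\circ Y=-Y\circ T\tilde\rho$. The plan is: (i) set up the universal moduli space and check that its linearization is onto at a somewhere injective solution using only antiinvariant perturbations; (ii) apply Sard--Smale to the projection to $\JJ_{\tilde\rho}$; (iii) compute the Fredholm index by Riemann--Roch for punctured disks with totally real boundary; (iv) read off the lower bounds for $\Ind$ from nonemptiness of the moduli space and, in the symplectization case, from the $\R$-translation action.

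For (i) in the half-sphere case, work near a somewhere injective $\tilde u_I$ in the universal moduli space $\MM^{\mathrm{univ}}=\{(\tilde u_I,\wt J)\mid \wt J\in\JJ_{\tilde\rho},\ \bar\p_{\wt J}\tilde u_I=0\}$. The vertical differential of the defining section at $(\tilde u_I,\wt J)$ is $(\zeta,Y)\mapsto D_{\tilde u_I}\zeta+\frac{1}{2}\,Y(\tilde u_I)\circ T\tilde u_I\circ i$, where $D_{\tilde u_I}$ is the linearized Cauchy--Riemann operator carrying the totally real boundary condition along $\Fix I$ and the asymptotic weights at $\Gamma_I$. If $\eta$ is $L^2$-orthogonal to the image, then $D_{\tilde u_I}^*\eta=0$ weakly and $\eta$ is smooth. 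The injective points of $\tilde u_I$ form an open dense subset of $D_+$, and since $\mathring D_+\cap\Fix I=\emptyset$ this set contains an injective point $z_0\in\mathring D_+$; by the definition of an injective point, $\tilde u_I(z_0)\notin\Fix\tilde\rho$ and $\tilde\rho(\tilde u_I(z_0))\notin\im\tilde u_I$. Hence any $Y$ supported in a sufficiently small ball around $\tilde u_I(z_0)$ can be extended $\tilde\rho$-antiinvariantly, by $-T\tilde\rho\circ Y\circ T\tilde\rho$ near $\tilde\rho(\tilde u_I(z_0))$, without modifying the operator near $z_0$, and the usual pointwise computation forces $\eta(z_0)=0$. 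Letting $z_0$ run over a dense subset gives $\eta\equiv0$ on $\mathring D_+$, hence on $D_+$; so the differential is surjective, $\MM^{\mathrm{univ}}$ is a Banach manifold, and Sard--Smale applied to $\MM^{\mathrm{univ}}\to\JJ_{\tilde\rho}$ makes $\tilde u_I$ regular for generic $\wt J$.

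For (iii), $\Ind(\tilde u_I)$ is the Riemann--Roch index of $D_{\tilde u_I}$ on the punctured disk $D_+\setminus\Gamma_I$ with totally real boundary conditions: $n$ times the Euler characteristic term plus the boundary Maslov term plus the asymptotic contributions. Organizing the latter as $\mu^\Phi_\RS(C_z)$ at boundary punctures $z\in\Gamma_\p$ and $\mu^\Phi_\CZ(P_z)$ at interior punctures $z\in\Gamma_o$, and using that the Euler characteristic of $D_+\setminus\Gamma_I$ is half that of $S^2\setminus\Gamma$, one obtains the topological term $(\dim\Xi-3)\frac{2-\#\Gamma}{2}$ and hence the asserted formula; this is exactly the computation of \cite{Dra04,Bou02,Bou06,Wen14}, with the interior gluing replaced by a symmetric doubling. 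The non-invariant sphere is the classical case: if $\im\tilde u\neq\tilde\rho(\im\tilde u)$, injective points $z_0$ with $\tilde\rho(\tilde u(z_0))\notin\im\tilde u$ exist and the argument of (i) applies with no boundary term, giving the stated index; if $\im\tilde u$ is $\tilde\rho$-invariant then $\tilde u$ is invariant under an anticonformal involution of the domain and, after reparametrization, one is reduced to the half-sphere case. Finally, for (iv): a neighborhood of a regular $\tilde u_I$ (resp. $\tilde u$) in the moduli space is a nonempty manifold of dimension $\Ind$, so $\Ind(\tilde u_I),\Ind(\tilde u)\geq0$; in a symplectization, $\pi\circ T\tilde u_I\neq0$ (resp. $\pi\circ T\tilde u\neq0$) rules out trivial cylinders, so the $\R$-translation acts locally freely near the curve and produces a one-parameter family inside the moduli space, whence $\Ind(\tilde u_I),\Ind(\tilde u)\geq1$.

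The step I expect to be the main obstacle is (i): one must guarantee that the \emph{antiinvariant} perturbations alone suffice to kill the cokernel, i.e. that for a dense set of injective points $z_0$ a small ball around $\tilde u_I(z_0)$ is disjoint both from the rest of $\im\tilde u_I$ and from its $\tilde\rho$-image — which is precisely what the injective-point definition for half-spheres was tailored to provide — together with the careful bookkeeping in the Riemann--Roch count coming from the half-integer Robbin--Salamon contributions at the boundary punctures.
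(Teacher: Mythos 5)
The paper gives no proof of this theorem: it is stated immediately after the remark that the transversality and index arguments of \cite{Dra04,Bou02,Bou06,Wen14} ``are easily modified,'' and the paper then simply moves on. Your proposal is therefore filling in what the paper leaves implicit, and you identify the crucial new point correctly. The set of injective points is open and dense in $D_+$ (this is the preceding Lemma), so it contains a point $z_0\in\mathring D_+$; the definition of an injective point for half-spheres then forces $\tilde u_I(z_0)\notin\Fix\tilde\rho$ and $\tilde\rho(\tilde u_I(z_0))\notin\im\tilde u_I$, so a compactly supported variation of $\wt J$ near $\tilde u_I(z_0)$ has a $\tilde\rho$-antiinvariant extension whose reflected support does not touch the curve, and the standard surjectivity argument for the universal moduli space runs unchanged inside $\JJ_{\tilde\rho}$. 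This is exactly what the half-sphere notion of injective point was tailored to provide. The Sard--Smale step, the Riemann--Roch count for the punctured disk with totally real boundary (Robbin--Salamon contributions at $\Gamma_\p$, Conley--Zehnder at $\Gamma_o$, a Maslov boundary term, and the half Euler characteristic producing $(\dim\Xi-3)(2-\#\Gamma)/2$), and the lower bounds from regularity and the $\R$-translation in the symplectization are all the standard ones.

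One place in your argument is looser than it should be, though the paper's one-line remark has the same gap: in the non-invariant sphere case, if $\im\tilde u=\tilde\rho(\im\tilde u)$ but $\tilde u\neq\tilde\rho\circ\tilde u\circ I$, the anticonformal involution $\psi$ of the domain satisfying $\tilde\rho\circ\tilde u=\tilde u\circ\psi$ need not be conjugate to $I$; it may be the antipodal map, which is fixed-point free, and then $S^2/\psi=\R P^2$ rather than a disk, so ``after reparametrization one is reduced to the half-sphere case'' fails. In the paper's applications this does not arise: a fixed-point-free involution of $S^2$ cannot preserve a set with one puncture, so the underlying planes are safe, and for the intermediate spheres the argument in Proposition \ref{prop:existence of invariant finite energy plane} is only run on underlying curves that are either $I$-invariant or occur in genuinely distinct $\tilde\rho$-conjugate pairs. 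But the theorem as stated covers the antipodal possibility, and a complete proof should either rule it out or handle that quotient separately.
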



\section{Real holomorphic curves in $(\mathbb{CP}^2,\mathbb{RP}^2)$}

In the following we think of $\mathbb{CP}_\infty^1$ as the line at infinity in $\mathbb{CP}^2$ via the embedding
$[z_0,z_1] \mapsto [z_0,z_1,0]$. The complement $\mathbb{CP}^2 \setminus \mathbb{CP}_\infty^1$ can now be identified with $\C^2$ via the map $[z_0,z_1,z_2] \mapsto \big(\frac{z_0}{z_2},\frac{z_1}{z_2}\big)$. We endow $\mathbb{CP}^2$ with the Fubini-Study form $\om_{\FS}$.  We consider on $(\mathbb{CP}^2,\om_{\FS})$ the antisymplectic involutions 
$$
\wh\rho \colon \mathbb{CP}^2 \to \mathbb{CP}^2,\quad [z_0,z_1,z_2]\mapsto[\bar{z}_0,\bar{z}_1,\bar {z}_2]
$$
so that $\wh\rho|_{\C^2}=\tilde\rho:(z_1,z_2)\mapsto (\bar z_1,\bar z_2)$.
The fixed point set of an antisymplectic involution is a Lagrangian submanifold unless it is empty. In particular, we have
$$
\Fix\wh\rho=\mathbb{RP}^2 \subset \mathbb{CP}^2.
$$
As in the complex case we think of $\mathbb{RP}^2$ as a compactification of $\R^2$ by adding a circle $\mathbb{RP}^1$ at infinity. We choose a 1-form 
$$
\lambda_\FS:=\frac{1}{2(1+\sum_{j=1}^2( x_j^2+y_j^2))}\sum_{i=1}^2(x_idy_i-y_idx_i)
$$
on $\C^2$, where $z_i=x_i+iy_i$ so that 
$$
d\lambda_\FS=\om_{\FS}|_{\C^2}=\frac{1}{(1+\sum_{j=1}^2 (x_j^2+y_j^2))^2}\sum_{i=1}^2dx_i\wedge dy_i.
$$
Let $M$ be a starshaped hypersurface in $(\C^2,\om=d\lambda)$ such that $\tilde\rho(M)=M$. We additionally assume that $(M,\alpha=\lambda|_M)$ is nondegenerate, i.e. every periodic Reeb orbit is nondegenerate. We choose $\kappa>0$ such that $\frac{1}{\kappa}M:=\{\frac{1}{\kappa}(z_0,z_1)\in\C^2\,|\,(z_0,z_1)\in M\}$ is included in the open unit ball $B\subset\C^2$.   Since 
$$
\SS:(B,\om)\to(\C^2,\om_\FS),\quad (x_1,y_1,x_2,y_2)\mapsto\frac{1}{\sqrt{1-\sum_{j=1}^2( x_j^2+y_j^2) }}(x_1,y_1,x_2,y_2)
$$
is a symplectomorphism such that $\SS\circ\tilde\rho=\tilde\rho$ and $\SS^*\lambda_\FS=\lambda|_B$, the dynamics of $\lambda_\FS$ on $\big(\SS(\frac{1}{\kappa}M),\tilde \rho\big)$ is equivalent to that of $\lambda$ on $(M,\tilde\rho)$. Abusing the notation, we denote $\SS(\frac{1}{\kappa}M)\subset\mathbb{CP}^2$ by $M$ again.  Let $L$ be a Liouville vector field defined on a neighborhood of $M\subset\mathbb{CP}^2$, i.e. $i_L\om_\FS=\lambda_\FS$. Using the flow of $L$ we identify a neighborhood of $M$ with $([-\epsilon,\epsilon]\x M,e^r\lambda_\FS|_M)$ for small $\epsilon>0$ where $r$ where $r$ is the coordinate on $[-\epsilon,\epsilon]$. The hypersurface $M$ divides $\mathbb{CP}^2$ into two connected compact components with boundary $M$. We denote by $V$ the component containing $\mathbb{CP}^1_\infty$ and by $W=\mathbb{CP}^2\setminus \mathring V$.
Following \cite{HWZ03}, we stretch the neck of $\mathbb{CP}^2$ in an open neighborhood of $M$ and obtain $(\mathbb{CP}^2_N,\om_N)$, $N\in\N$ such that 
\begin{itemize}
\item [i)] $\mathbb{CP}^2_N$ is diffeomorphic to $W\sqcup([-N,N]\x M)\sqcup V/\sim$ where $\sim$ indicates the boundary identification $\p W=\{-N\}\x M$ and $\p V=\{N\}\x M$;
\item [ii)] the symplectic form $\om_N$ is defined by 
$$
\om_N=\left\{\begin{array}{ll} d(\varphi_N\lambda_\FS) \quad & [-N-\epsilon,N+\epsilon]\x M,\\[1ex]
\om_{\FS} & W\sqcup V\setminus ([-N-\epsilon,-N]\cup[N,N+\epsilon])\x M.
\end{array}\right.
$$
\end{itemize}
for a small $\epsilon>0$ where $\varphi$ is a smooth function such that 
$$
\varphi_N:[-N-\epsilon,N+\epsilon]\to\R,\quad\varphi_N'>0,\quad \varphi_N(r)=\left\{\begin{array}{ll} e^{r+N}\quad & [-N-\epsilon,-N-\tfrac{\epsilon}{2}],\\[1ex]
e^{r-N} & [N+\tfrac{\epsilon}{2},N+\epsilon].
\end{array}\right.
$$
For the detailed construction we refer the reader to \cite{HWZ03}.

We also stretch $\hat\rho$ and the complex structure to $\hat\rho_N$ and $\wh J_N$ respectively so that 
\begin{itemize}
\item [i)] $\hat\rho_N=\hat\rho$ on $V\sqcup W$ and $\hat\rho_N=\tilde\rho$ on $[-N,N]\x M\subset\C^2$. Accordingly, $\hat\rho_N^*\om_N=-\om_N$.
\item [ii)] $\wt J_N$ preserves the contact hyperplane $\xi=\ker\lambda_\FS$ and $\wt J_N|_{\xi}$ is $\om_N|_{\xi}$-compatible.
\item [iii)] $\wh J_N$ is antiinvariant with respect to  $\hat\rho_N$, equal to the standard complex structure of $\mathbb{CP}^2$ near $\mathbb{CP}^1_\infty$, and cylindrical on the cylindrical part $[-N,N]\x M$.
\end{itemize}
We abbreviate by $\JJ_N$ the set of $\om_{\FS}$-compatible almost complex structures with these three properties. Denote by $\mathbb{RP}^2_N:=\Fix \hat\rho_N$.

We are interested in $\wh J_N$-holomorphic spheres in $\mathbb{CP}_N^2$ invariant under $\hat\rho_N$. Let $C$ be a $\wh J_N$-holomorphic sphere homologous to $\mathbb{CP}^1\subset \mathbb{CP}_N^2$. Then by the adjunction formula \cite{Gro85,McD91}, $C$ is always embedded. There exists a unique complex line $\mathbb{CP}^1\subset(\mathbb{CP}^2,i)$ passing through any two prescribed points $\mathbb{CP}^2$. Thus, there exists a unique $\wh J_N$-holomorphic sphere homologous to $(\mathbb{CP}^1_N,\wh J_N)$ passing through any two points $p,\,q\in\mathbb{CP}_N^2$ due to positivity of  intersections and  the implicit function theorem, see \cite{HWZ03} for details. This is true even for $\wh J_N\in\JJ_N$ due to automatic transversality \cite{HLS98}. Furthermore, observe that if $p,\,q\in\mathbb{RP}^2_N$, such a $C$ is $\hat\rho_N$-invariant, i.e. 
$$
\hat\rho_N(C)=C
$$
by the uniqueness.

We fix a point $o_\infty$ in $\mathbb{RP}^1_\infty\subset\mathbb{RP}^2_N$. Then \cite[Theorem 2.15]{HWZ03} refines as follows.
\begin{Thm}\label{thm:invariant holomorphic sphere}
There exists a unique embedded $\wh J_N$-holomorphic sphere 
$$
C^q_N\subset (\mathbb{CP}^2_N,\wh J_N,\hat\rho_N,\om_N)
$$
for $q\neq o_\infty$ which is homologous to $\mathbb{CP}^1\subset \mathbb{CP}^2_N$ and passes through $q,\,o_\infty\in \mathbb{CP}^2_N$. Note that 
\begin{itemize}
\item [i)] $C^{q}_N$ and $C^{p}_N$ either coincide or intersect exactly at $o_\infty$ transversally.
\item [ii)] $\hat\rho_N(C^q_N)=C^{\hat\rho_N(q)}_N$ is also an embedded $\wh J_N$-holomorphic sphere.
\item [iii)] $C^q_N$ is $\hat\rho_N$-invariant if $q\in\mathbb{RP}^2_N$.
\item [iv)] $C^q_N=\mathbb{CP}^1_\infty$ if $q\in\mathbb{CP}^1_\infty$.
\end{itemize}
\end{Thm}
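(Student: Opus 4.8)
The plan is to deduce the theorem from the non-equivariant existence and uniqueness statement recalled just above — for any two distinct points of $\C P^2_N$ there is a unique (as an unparametrized curve) embedded $\wh J_N$-holomorphic sphere homologous to $\C P^1$ through them — and then to upgrade it to the stated equivariant picture by an anti-holomorphic reparametrization argument. Concretely, I would set the second point equal to $o_\infty$ to obtain, for each $q\neq o_\infty$, the embedded sphere $C^q_N$ in the class $[\C P^1]$ through $q$ and $o_\infty$; embeddedness comes from the adjunction formula, since $[\C P^1]$ is primitive so every such sphere is simple, and $c_1=3$, self-intersection $1$, genus $0$ force the singularity count to vanish. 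The existence is obtained, following \cite{HWZ03}, by deforming from the integrable complex structure together with Gromov compactness — no bubbling can occur because $[\C P^1]$ realizes the minimal positive symplectic area and cannot split into classes of positive area — while the uniqueness follows from positivity of intersections. The one point requiring care, and which is the technical heart already carried out in \cite{HWZ03}, is that $\wh J_N\in\JJ_N$ need not be generic; here one invokes the automatic transversality of \cite{HLS98}, applicable because the normal bundle of such a sphere has non-negative Chern number.

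Granting this, property i) would follow directly from positivity of intersections: two distinct embedded $\wh J_N$-holomorphic spheres in the class $[\C P^1]$, which has self-intersection $1$, meet in exactly one point with local intersection number $1$, hence transversally, and since both contain $o_\infty$ this point is $o_\infty$; if they are not distinct they coincide. For the equivariant properties, I would let $u\colon S^2\to\C P^2_N$ parametrize $C^q_N$. Because $\wh J_N$ is anti-invariant under $\hat\rho_N$ and $I\colon S^2\to S^2$ is anti-holomorphic, the map $v:=\hat\rho_N\circ u\circ I$ is again $\wh J_N$-holomorphic; since $\hat\rho_N$ acts by $-1$ on $H_2(\C P^2_N;\Z)$ (it reverses $\om_N$, hence reverses the generator of $H^2$ and of $H_2$) and $I$ has degree $-1$, the composite $v$ again represents $[\C P^1]$, and it passes through $\hat\rho_N(q)$ and $\hat\rho_N(o_\infty)=o_\infty$. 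By the uniqueness recalled above its image is $C^{\hat\rho_N(q)}_N$, giving $\hat\rho_N(C^q_N)=C^{\hat\rho_N(q)}_N$, which is property ii); property iii) is then the special case $q\in\R P^2_N=\Fix\hat\rho_N$. For property iv), note that $\C P^1_\infty$ is itself a $\wh J_N$-holomorphic sphere in the class $[\C P^1]$ — recall $\wh J_N$ coincides with the standard complex structure of $\C P^2$ near $\C P^1_\infty$, so the line at infinity is $\wh J_N$-holomorphic — and it passes through $q$ and $o_\infty$ when $q\in\C P^1_\infty$, so uniqueness forces $C^q_N=\C P^1_\infty$.

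The main obstacle is entirely contained in the existence-and-regularity input recalled above, namely achieving regularity of the relevant degree-one holomorphic spheres for the non-generic anti-invariant almost complex structure $\wh J_N$; everything else is a formal consequence of uniqueness together with the anti-holomorphic symmetry of the Cauchy--Riemann equation, and the homological bookkeeping (the orientation behaviour of $\hat\rho_N$ and $I$) is routine. Since this regularity is precisely what \cite{HWZ03} and \cite{HLS98} supply, the theorem should follow with no genuinely new analytic input beyond organizing the symmetry.
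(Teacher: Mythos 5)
Your proposal is correct and follows essentially the same route the paper intends: the paper does not write out a formal proof for this theorem but instead gives, immediately before the statement, exactly the chain of ingredients you use (adjunction formula for embeddedness, positivity of intersections plus the implicit function theorem / continuation from the integrable structure for existence and uniqueness as in \cite{HWZ03}, automatic transversality from \cite{HLS98} to handle the non-generic $\wh J_N\in\JJ_N$, and uniqueness combined with the anti-holomorphic symmetry to derive the equivariant refinements ii)--iv)). Your homological bookkeeping for $\hat\rho_N$ and $I$, and your derivation of i) from positivity of intersections and the fact that all the $C^q_N$ share the point $o_\infty$, agree with what the paper's surrounding text supplies.
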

Observe that $\{C^q_N\}$ form a singular foliation of $\mathbb{CP}^2_N$ with the only singular point $o_\infty$. Moreover $C^{q_1}_N$ with $q_1\in \mathbb{RP}^2_N\cap(\{-N\}\x M)$ goes below the level $\{0\}\x M$ but $C^{q_2}_N$ with $q_2\in\mathbb{RP}^1_\infty$ does not, see item iv) of Theorem \ref{thm:invariant holomorphic sphere}. Since $\{C^q_N\,|\, q\in\mathbb{RP}^2_N\}$ is a continuous family of ($\hat\rho_N$-invariant) leaves of the foliation, there exists $q_0\in\mathbb{RP}^2_N\cap(([0,-N]\x M)\sqcup V/\sim)$ such that 
$$
C^{q_0}_N\subset ([-N,N]\x M)\sqcup V/\sim,\quad \min \pi_N(C^{q_0}_N\cap([-N,N]\x M)) =0
$$
where $\pi_N:[-N,N]\x M\to [-N,N]$ is the projection along $M$.  This special $\wh J_N$-holomorphic sphere will generate an invariant fast finite energy plane.

\section{Invariant fast finite energy plane}

In this section we briefly discuss compactness results of a sequence of holomorphic parametrizations of $C_N^q$, $N\in\N$ and show that a piece of the limit holomorphic curves is indeed an invariant fast finite energy plane. Recall that we have assumed that $(M,\alpha)$ is nondegenerate. To begin with, the limiting object of $(\mathbb{CP}^2_N,\wh J_N,\hat\rho_N,\om_N)$ is 
$$
(\wt W,\wh J_\infty,\hat\rho_\infty,\om_{\wt W} ),\quad(\R\x M,\wh J_\infty,\hat\rho_\infty,d\lambda),\quad (\wt V,\hat  J_\infty,\hat\rho_\infty,\om_{\wt V})
$$
where
$$
\wt W=W\sqcup([0,\infty)\x M)/\sim,\quad \wt V=((-\infty,0])\x M)\sqcup V/\sim
$$
and 
$$
\om_{\wt W}=\left\{\begin{array}{cc}\om_{\FS} & W\setminus [-\epsilon,0]\x M\\[1ex]
d(\varphi_+\alpha) & [-\epsilon,\infty)\x M
\end{array}\right.,\quad 
\om_{\wt V}=\left\{\begin{array}{cc}\om_{\FS} & V\setminus [0,\epsilon]\x M\\[1ex]
d(\varphi_-\alpha) & (-\infty,\epsilon]\x M
\end{array}\right.
$$
for 
$$
\varphi_+:[-\epsilon,\infty)\to(0,1),\quad \varphi_+'>0,\quad  \varphi_+(s)=e^s\;\textrm{ near } -\epsilon
$$ 
and 
$$
\varphi_-:(-\infty,\epsilon]\to(0,\infty),\quad \varphi_-'>0,\quad  \varphi_-(s)=e^s\;\textrm{ near }\; \epsilon.
$$ 
Here $\sim$ indicates the identification as before along the boundaries which are diffeomorphic to a copy of $M$. Note that 
$$
\hat\rho_\infty|_{\wt W}=\tilde\rho,\qquad \hat\rho_\infty|_{\R\x M}=\tilde\rho.
$$

We choose a $\wh J_N$-holomorphic parametrization 
$$
w_N:S^2\to \mathbb{CP}^2_N,\quad w_N\circ I=\hat\rho_N\circ w_N,\;\quad w_N(z_0)\in\{0\}\x M,\; w_N(\infty)=o_\infty.
$$ 
of the embedded $\wh J_N$-holomorphic sphere $C_N^{q_0}$ in Theorem \ref{thm:invariant holomorphic sphere} so that the sequence $\{w_N\}$  converges to a holomorphic building as follows. Here $I(z)=\bar z$, $z\in S^2=\C\cup\{\infty\}$. The bottom of the building is composed of finite energy planes in $\wt W$ or $\R\x M$
$$
\tilde u_{1,1}:\C\pf\wt W\;(\textrm{or $\R\x M$}),\;\cdots,\;\tilde u_{1,n_1}:\C\pf\wt W\;(\textrm{or $\R\x M$})
$$
for $n_1\in\N$,
$$
\tilde u_{i,1}:S^2\setminus\Gamma_{i,1}\pf \R\x M,\;\cdots,\;\tilde u_{i,n_i}:S^2\setminus\Gamma_{i,n_i}\pf \R\x M
$$  
for $n_i\in\N$ are punctured finite energy spheres in the middle stories $2\leq i\leq q-1$ and the top is a single punctured finite energy sphere
$$
\tilde u_{q,1}=\tilde u_q:S^2\setminus\Gamma_q\pf \wt V
$$
meeting the following properties.
\begin{itemize}
\item[i)] The positive asymptotic periodic orbits of $\tilde u_{i,j}$ match with the negative asymptotic periodic orbits of $\tilde u_{i+1,j'}$, $1\leq i\leq q-1$ appropriately.
\item[ii)] Every curve $\tilde u_{i,j}$, $1\leq i\leq q-1$, $1\leq j\leq n_i$ has precisely one positive puncture. For every $i$, there exists $j$ such that $\tilde u_{i,j}$ is not a trivial cylinder over a periodic orbit.
\item[iii)] The whole building is invariant, i.e. if $\tilde u_{i,j}$ intersects with $\Fix \hat\rho_\infty$, $\tilde u_{i,j}\circ I=\hat  \rho_\infty\circ\tilde u_{i,j}$ and otherwise $\tilde u_{i,j}\circ I=\hat\rho_\infty\circ\tilde u_{i,j'}$ for some $1\leq j\neq j'\leq n_i$. A curve $\tilde u_{i,j}$ is invariant if and only if the positive asymptotic periodic orbit is symmetric.
\item[iv)] In particular, $\tilde u_q$ is invariant and intersects $\mathbb{CP}^1_\infty$ once at $o_\infty$ transversally.
\item[v)] There exist $1\leq k\leq q-1$ and $1\leq \ell\leq n_k$ such that $\Gamma_{k,\ell}=\{\infty\}$ and 
$$
\tilde u_{k,\ell}=(a_{k,\ell},u_{k,\ell}):S^2\setminus\Gamma_{k,\ell}=\C\to\R\x M,\quad  \min a_{k,\ell}=0.
$$
\end{itemize}
We refer to \cite{HWZ03} for details on this SFT compactness result, see also \cite{BEHWZ03,CM05}. In particular see \cite[Proposition 7.1]{HWZ03} for item v).

Recall that $\mu_\CZ(P)=\mu^{\Phi}_\CZ(P)$ and $\mu_\RS(C)=\mu^{\Phi}_\RS(C)$ for any trivialization $\Phi$ of filling (half-) disks of a periodic orbit $P$ and a chord $C$ in $(M,X,\rho)$, see \eqref{eq:index notation}.

\begin{Lemma}\label{Lemma:Fredholm indices}
Let  $\tilde u_q:S^2\setminus\Gamma_q\to\wt V$ be the invariant finite energy sphere above. The constrained Fredholm index of the finite energy half-sphere $(\tilde u_q)_I:(D_+,\Fix I)\setminus(\Gamma_q)_I\to(\wt V,\Fix\hat\rho_\infty)$ with $(\tilde u_q)_I(\infty)=o_\infty$ equals 
$$
 \Ind((\tilde u_q)_I;o_\infty)=-\sum_{z\in(\Gamma_{q})_\p}\mu_{\RS}(C_z)-\sum_{z\in(\Gamma_{q})_o}\mu_\CZ(P_z)+\frac{\#\Gamma^-_\p}{2}+\#\Gamma^-_o+1
$$
where $C_z$'s and $P_z$'s are asymptotic chords and periodic orbits of $\tilde u_q$ at $z\in(\Gamma_q)_I=(\Gamma_{q})_\p\sqcup(\Gamma_{q})_o$ respectively.
\end{Lemma}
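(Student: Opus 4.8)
The plan is to deduce the formula from the Fredholm index formula of Theorem~\ref{thm:trasversality}, together with a bookkeeping of the point constraint $(\tilde u_q)_I(\infty)=o_\infty$ and of the unique transverse intersection of $\tilde u_q$ with the line at infinity $\C P^1_\infty$.

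First I would check that $(\tilde u_q)_I$ is somewhere injective, so that Theorem~\ref{thm:trasversality} applies. By item~iv) in the description of the building, $\tilde u_q$ is invariant and meets $\C P^1_\infty$ exactly once and transversally, at $o_\infty$; in particular it is not contained in the cylindrical end. If $\tilde u_q$ were not somewhere injective, then by Theorem~\ref{invariant branched covering} it would factor as $\tilde u_q=\underline{\tilde u}\circ p$ with $\deg p>1$; but then, by positivity of intersections near $\C P^1_\infty$ (where $\wh J_\infty$ is integrable), the intersection number of $\tilde u_q$ with $\C P^1_\infty$ would be $\deg p$ times the positive intersection number of $\underline{\tilde u}$ with $\C P^1_\infty$, contradicting that it equals $1$. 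Hence $\tilde u_q$, and therefore $(\tilde u_q)_I$, is somewhere injective, with nondegenerate asymptotic orbits since $(M,\alpha)$ is nondegenerate.

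Next I would substitute into Theorem~\ref{thm:trasversality}. Since $\wt V$ has only a negative cylindrical end over $M$, every puncture of $\tilde u_q$ is negative, i.e.\ $\Gamma_q=\Gamma_q^-$; using $\dim\wt V=4$ and dropping the trivialization superscripts via \eqref{eq:index notation}, the index formula reduces to
$$
\Ind((\tilde u_q)_I)=-\sum_{z\in(\Gamma_q)_\p}\mu_\RS(C_z)-\sum_{z\in(\Gamma_q)_o}\mu_\CZ(P_z)+\frac{2-\#\Gamma_q}{2},
$$
where $\#\Gamma_q=\#(\Gamma_q)_\p+2\#(\Gamma_q)_o$ because the interior punctures of the double come in $I$-mirror pairs while the boundary punctures lie on $\Fix I$.

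It then remains to convert this unconstrained index into the constrained one. The key input is that near $\C P^1_\infty$ the structure $\wh J_\infty$ is integrable, that $\C P^1_\infty$ is a $\wh J_\infty$-holomorphic sphere with $[\C P^1_\infty]\cdot[\C P^1_\infty]=1$, and that $(\tilde u_q)_I$ meets it transversally and only at the boundary point $\infty\in\Fix I$. One convenient organization is to stretch once more along a small sphere bundle around $\C P^1_\infty$ (equivalently, to delete a small half-disk around $\infty$): the transverse intersection becomes an extra positive boundary puncture asymptotic to one half of a simple Reeb orbit of that bundle, of explicit Robbin--Salamon index, the relative class of $(\tilde u_q)_I$ loses its $c_1$-contribution from that self-intersection-one sphere, and requiring the intersection point to be exactly $o_\infty\in\R P^1_\infty$ becomes a codimension-one condition on a boundary marked point landing in the one-dimensional real locus $\R P^1_\infty$. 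Equivalently one may compute the linearized operator of the constrained problem directly, remembering that in the holomorphic building the asymptotic orbits and chords at the negative ends are free data, each puncture of the double contributing one further dimension. In either route the Euler term $\frac{2-\#\Gamma_q}{2}$ is replaced by $\frac{\#\Gamma_q}{2}+1=\frac{\#\Gamma^-_\p}{2}+\#\Gamma^-_o+1$, which gives the claimed identity. I expect this last normalization — reconciling the building-theoretic index convention with the Fredholm index of Theorem~\ref{thm:trasversality}, handling the constraint at $o_\infty$ and the intersection with $\C P^1_\infty$ correctly, and keeping the $O(n)\setminus SO(n)$ trivialization ambiguity of Lemma~\ref{Lemma:symmetric trivialization} under control so that all indices are trivialization-independent — to be the only real difficulty; everything else is routine.
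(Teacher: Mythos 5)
Your somewhere-injectivity argument at the start is sound, and the factorization/positivity-of-intersection reasoning is a clean way to see it. However, the index computation itself has a genuine gap: you ``drop the trivialization superscripts via \eqref{eq:index notation}'' immediately after applying Theorem~\ref{thm:trasversality}, but this is not justified. Equation~\eqref{eq:index notation} only says that the $\mu_\RS$ and $\mu_\CZ$ of Reeb chords/orbits are independent of the choice of \emph{filling (half-) disk} trivialization. The trivialization $\Phi$ of $(\tilde u_q)_I^*(T\wt V,T\Fix\hat\rho_\infty)$ appearing in Theorem~\ref{thm:trasversality} is a trivialization over the \emph{domain of the curve}, and along the asymptotic orbits it is in general \emph{not} homotopic to a filling-disk trivialization. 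The discrepancy is precisely a Chern-number correction: the paper caps off the negative punctures of $\tilde u_q$ with filling disks in $\R\times M$ to obtain a sphere in the class $[\C P^1]$, and then uses Corollary~\ref{cor:index+trivialization} to obtain $\sum\mu_\RS(C_z)+\sum\mu_\CZ(P_z)=\sum\mu^\Phi_\RS(C_z)+\sum\mu^\Phi_\CZ(P_z)+c_1(T\C P^2)[\C P^1]$, with $c_1(T\C P^2)[\C P^1]=3$. This $+3$ is a substantive part of the answer and cannot be waved away.

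Consequently, your final step --- ``the Euler term $\frac{2-\#\Gamma_q}{2}$ is replaced by $\frac{\#\Gamma_q}{2}+1$'' --- is not a derivation. The magnitude of that replacement is $\#\Gamma_q$, and you attribute it vaguely to ``stretching along $\C P^1_\infty$'' and ``reconciling conventions,'' but those routes do not visibly produce this number. The actual accounting (which the paper carries out) is: the point constraint $(\tilde u_q)_I(\infty)=o_\infty$ subtracts $1$, and the change of trivialization adds $c_1(T\C P^2)[\C P^1]=3$; combined with the Euler term from Theorem~\ref{thm:trasversality} (where, incidentally, the factor written as $\dim\Xi-3$ must be read as $\dim_\C\Xi-3$ to be consistent with the index formulas the paper uses later, e.g.\ in the proof of Proposition~\ref{prop:existence of invariant finite energy plane}), this yields exactly the stated $\frac{\#\Gamma^-_\p}{2}+\#\Gamma^-_o+1$. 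You reach the correct final formula, but only because the omitted $c_1$-contribution and the unexplained ``replacement'' happen to compensate; the proof as written does not establish the identity.
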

\begin{proof}
This corresponds to \cite[Proposition 5.2]{HWZ03} and we outline a proof. According to Theorem \ref{thm:trasversality}, for  a symmetric unitary trivialization $\Phi$ of $\tilde u_q^*T\wt V$,
$$ 
\Ind((\tilde u_q)_I;o_\infty)=-\sum_{z\in\Gamma^-_\p}\mu^\Phi_\RS(C^-_z)-\sum_{z\in\Gamma^-_o}\mu^\Phi_\CZ(P_z^-)+\frac{\#\Gamma^-_\p}{2}+\#\Gamma^-_o-2
$$
since the condition $(\tilde u_q)_I(\infty)=o_\infty$ decreases the index by 1. We choose maps from $\C$ to $\R\x M$ capping off punctures of $\tilde u_q$ so that these together with $\tilde u_q$ give a map $S^2\to\mathbb{CP}^2_N$ homologous to $\mathbb{CP}^1\subset\mathbb{CP}_N^2$ after gluing. Then the arguments in Corollary \ref{cor:index+trivialization} show
$$
\sum_{z\in(\Gamma_{q})_\p}\mu_{\RS}(C_z)+\sum_{z\in(\Gamma_{q})_o}\mu_\CZ(P_z)=\sum_{z\in(\Gamma_{q})_\p}\mu^\Phi_{\RS}(C_z)+\sum_{z\in(\Gamma_{q})_o}\mu^\Phi_\CZ(P_z)+c_1(T\mathbb{CP}^2)[\mathbb{CP}^1]
$$
and this proves the lemma.
\end{proof}

\begin{Prop}\label{prop:existence of invariant finite energy plane}
If $(M,\alpha)$ is dynamically convex, the finite energy plane $\tilde u_{k,\ell}:\C\to\R\x M$ is somewhere injective and invariant. If we write $P$ for the symmetric asymptotic periodic orbit and $C$ for the half-chord of it, we have
$$
\mu_\RS(C)=\frac{3}{2},\qquad \mu_\CZ(P)\in\{3,4\}.
$$
\end{Prop}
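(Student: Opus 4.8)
The plan is to determine the SFT building of Section~5 by an index count, in the spirit of \cite[\S5]{HWZ03}, but carried out with the Robbin--Salamon index of Reeb chords (Lemma~\ref{Lemma:Fredholm indices}, Theorem~\ref{thm:trasversality}, Corollary~\ref{cor:index behaviors}, Proposition~\ref{prop:Hormander index}) rather than the Conley--Zehnder index of closed orbits. Since the Reeb dynamics of $M$ and the surface of section produced in Section~6 do not depend on $\wt J$, I may assume throughout that the stretched almost complex structures $\wh J_N$, and hence their limit, are generic among antiinvariant compatible cylindrical structures.

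First I would look at the top curve $\tilde u_q\colon S^2\setminus\Gamma_q\to\wt V$. By property~iv) it is invariant, and after capping its punctures it represents the line class, so it is embedded; hence it is somewhere injective and (automatic transversality, \cite{HLS98}) the constrained index satisfies $\Ind((\tilde u_q)_I;o_\infty)\ge 0$. All asymptotic data of $\tilde u_q$ are Reeb orbits of the dynamically convex $M$, so every interior-puncture orbit has $\mu_\CZ\ge 3$ and, by Corollary~\ref{cor:index behaviors}, every symmetric boundary-puncture orbit has half-chord index $\mu_\RS\ge 3/2$. Substituting these into Lemma~\ref{Lemma:Fredholm indices} gives $0\le\Ind((\tilde u_q)_I;o_\infty)\le 1-\#\Gamma_\p^- -2\#\Gamma_o^-$, where $\#\Gamma_\p^-,\#\Gamma_o^-$ count the boundary and interior punctures of $\tilde u_q$; as the building is nontrivial $\tilde u_q$ has at least one puncture, so it has exactly one, it is a boundary puncture, its asymptotic orbit $P$ is symmetric, and its half-chord $C$ satisfies $\Ind((\tilde u_q)_I;o_\infty)=\tfrac32-\mu_\RS(C)=0$, i.e.\ $\mu_\RS(C)=\tfrac32$. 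Moreover $P$ is simple: if $P=P_0^d$ with $d\ge 2$ then $\mu_\CZ(P_0)\ge 3$, so $\mu_\RS(C_0)\ge\tfrac32$ and hence $\mu_\RS(C)=\mu_\RS(C_0^d)\ge\tfrac{2d+1}{2}\ge\tfrac52$ by Corollary~\ref{cor:index behaviors}, a contradiction.

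Next I would propagate this down the building. The single (negative) puncture of $\tilde u_q$ matches the unique positive puncture of exactly one curve on the level below, so that level consists of one curve, which has the symmetric orbit $P$ at its positive puncture; by property~iii) it is invariant, and by simplicity of $P$ it is somewhere injective. Were this an intermediate level, the curve would lie in the symplectization $\R\times M$ with $\pi\circ T\tilde u\neq0$ and with at least one negative puncture, so Theorem~\ref{thm:trasversality} gives its half-sphere Fredholm index $\ge1$; but the dynamical convexity bounds on its negative data together with $\mu_\RS(C)=\tfrac32$ force that index to be $\le 2-2\#\Gamma_\p^- -\tfrac72\#\Gamma_o^-$, so it would have no negative puncture, a contradiction. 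Hence the level below $\tilde u_q$ is the bottom one; it consists of the single plane $\tilde u_{1,1}$ (a second bottom plane would have an unmatched positive puncture), which is therefore $\tilde u_{k,\ell}$ (so $k=1$, and it maps into $\R\times M$ as in property~v)). Thus $\tilde u_{k,\ell}$ is invariant with simple symmetric asymptotic orbit $P$ and half-chord $C$, so it is somewhere injective and $\mu_\RS(C)=\tfrac32$; and since $P$ is nondegenerate, Proposition~\ref{prop:Hormander index} yields $|\mu_\CZ(P)-2\mu_\RS(C)|\le1$, i.e.\ $\mu_\CZ(P)\in\{2,3,4\}$, while dynamical convexity excludes $2$, leaving $\mu_\CZ(P)\in\{3,4\}$.

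The main obstacle is the index bookkeeping itself: the Fredholm-index formulas of Theorem~\ref{thm:trasversality} and Lemma~\ref{Lemma:Fredholm indices} may only be applied to somewhere injective curves, and the puncture matching along levels must be compatible with the $\hat\rho_\infty$-symmetry of the building. Both are handled by the observation that every matching orbit forced by the count carries the minimal index $\mu_\RS=\tfrac32$ --- hence is simple, making the relevant curves somewhere injective --- and is symmetric, which rules out mirror-pair components. This is precisely the step that uses dynamical convexity, through Corollary~\ref{cor:index behaviors}.
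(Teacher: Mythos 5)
Your proof is correct and reaches the same conclusion, but by a genuinely different route from the paper. You run the index bookkeeping \emph{top--down}: you feed the dynamical convexity bounds ($\mu_\CZ\ge 3$ for every orbit, hence $\mu_\RS\ge 3/2$ for every half-chord by Corollary~\ref{cor:index behaviors}) directly into the constrained index formula of Lemma~\ref{Lemma:Fredholm indices} for $(\tilde u_q)_I$, and the constraint $\Ind\ge 0$ immediately pins down the structure: a single boundary puncture with $\mu_\RS(C)=3/2$. You then propagate down one level, showing the matching curve is a somewhere injective invariant plane, and finish with the H\"ormander-index estimate. The paper instead argues \emph{bottom--up}: it first establishes, \emph{without} dynamical convexity, that every nonsymmetric orbit in the building has $\mu_\CZ\ge 1$ and every symmetric half-chord $\mu_\RS\ge 1/2$ (via transversality and iteration monotonicity), then caps these from above at the top level, obtaining $\mu_\CZ\in\{1,2\}$ and $\mu_\RS\in\{1/2,3/2\}$, and only then brings in dynamical convexity to force $\tilde u_{k,\ell}$ to be invariant and simple. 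The paper's longer route buys the independent remark recorded right after the Proposition (the index bounds on the whole building hold even without dynamical convexity), which your direct route does not give. Two minor slips in your write-up: the lower bound $\Ind((\tilde u_q)_I;o_\infty)\ge 0$ comes from the generic transversality of Theorem~\ref{thm:trasversality} applied to the somewhere injective constrained curve (the reference \cite{HLS98} enters at the pre-limit stage in Theorem~\ref{thm:invariant holomorphic sphere}, not here), and the coefficient of $\#\Gamma_o^-$ in your intermediate-level estimate should be $4$ rather than $\tfrac72$ (it reads $\Ind(\tilde v_I)\le 2-2\#\Gamma_\p^--4\#\Gamma_o^-$); neither affects the conclusion.
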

In fact we have shown here that nonsymmetric asymptotic periodic orbits in the holomorphic building have Conley-Zehnder indices in $\{1,2\}$ and the half-chords of symmetric asymptotic periodic orbits have Robbin-Salamon index in $\{\frac{1}{2},\frac{3}{2}\}$ without dynamical convexity. However this fact will not be used later.
\begin{proof}
The idea of the proof is analyzing indices using Corollary \ref{cor:index behaviors}, Theorem \ref{thm:trasversality}, and Lemma \ref{Lemma:Fredholm indices} from the bottom to the top of the holomorphic building inductively. This is a symmetric counterpart of \cite[Proposition 5.7]{HWZ03}.

Let $\tilde {\underline{u}}_{1,j}$ be the underlying somewhere injective finite energy plane of $\tilde u_{1,j}$. If $\tilde u_{1,j}$ is not invariant, $\tilde {\underline{u}}_{1,j}$ is cut out transversally for a generic $\wh J_\infty\in\JJ_{\tilde\rho}$, its positive asymptotic periodic orbit has Conley-Zehnder index at least 1. If it is invariant, the finite energy half-plane $(\tilde {\underline{u}}_{1,j})_I$ is cut out transversally for a generic $\wh J_\infty\in\JJ_{\tilde\rho}$ and the asymptotic chord has Robbin-Salamon index at least $\frac{1}{2}$. Recall that $\mu_\CZ$ and $\mu_\RS$  do not decrease under iteration if the initial indices are at least $1$ and $\frac{1}{2}$ respectively. Moreover otherwise, they do not increase.

Suppose that $\tilde u_{i,j}$, $2\leq i\leq q-1$ is invariant and Conley-Zehnder indices of its nonsymmetric negative asymptotic periodic orbits are at least 1 and Robbin-Salamon indices of the half-chords of its symmetric negative asymptotic periodic orbits are at least $\frac{1}{2}$. We denote by $\underline{\tilde u}_{i,j}$ the underlying somewhere injective finite energy sphere of $\tilde u_{i,j}$. Then its negative asymptotic orbits have the same property as observed above. If $\pi\circ T u_{i,j}\neq0$, since 
$$
\mathrm{Ind}\big((\underline{\tilde u}_{i,j})_I\big)=\mu_\RS(C^+)-\sum_{z\in\Gamma^-_\p}\mu_\RS(C^-_z)-\sum_{z\in\Gamma^-_o}\mu_\CZ(P^-_z)-\frac{1-\#\Gamma^-_\p-2\#\Gamma^-_o}{2}\geq 1,
$$
where $C^+$ a positive asymptotic chord, $C_z^-$'s are negative asymptotic chords, and $P_z^-$'s are negative asymptotic periodic orbit of $(\underline{\tilde u}_{i,j})_I$, we have $\mu_\RS(C^+)\geq \frac{3}{2}$. In the case that $\pi\circ T \tilde u_{i,j}=0$, $\underline{\tilde u}_{i,j}$ is a trivial cylinder, one can easily show that $\mu_\RS(C^+)\geq\frac{1}{2}$. If $\tilde u_{i,j}$ is not invariant, the argument of \cite[Proposition 5.7]{HWZ03} goes through and thus the Conley-Zehnder index of the positive asymptotic periodic orbit of $\tilde u_{i,j}$ is at least 1. Hence we have shown that asymptotic nonsymmetric periodic orbits of the limit building have Conley-Zehnder index at least 1 and the half-chords of asymptotic symmetric periodic orbits have Robbin-Salamon index at least $\frac{1}{2}$ by induction.

Suppose that there is a symmetric asymptotic periodic orbit in the holomorphic building such that its half-chord has  Robbin-Salamon index at least $\frac{5}{2}$. Then arguing as above, $(\tilde u_q)_I$ has a negative asymptotic chord with index at least $\frac{5}{2}$. Since $(\tilde u_q)_I$ is somewhere injective due to the constraint at $o_\infty$, by Lemma \ref{Lemma:Fredholm indices} we have
$$
\mathrm{Ind}((\tilde u_q)_I;o_\infty)=-\sum_{z\in\Gamma^-_\p}\mu_\RS(C^-_z)-\sum_{z\in\Gamma^-_o}\mu_\CZ(P_z^-)+\frac{\#\Gamma^-_\p}{2}+\#\Gamma^-_o+1\leq-1
$$
where $C_z^-$'s are negative asymptotic chords and $P_z^-$ are negative asymptotic periodic orbits. This contradiction shows that the half-chords of symmetric asymptotic periodic orbits in the holomorphic building have Robbin-Salamon index in $\{\frac{1}{2},\frac{3}{2}\}$. In a similar vein, every nonsymmetric asymptotic periodic orbit in the holomorphic building has to have  Conley-Zehnder index in $\{1,2\}$, since otherwise $\tilde u_q$ has a negative nonsymmetric asymptotic periodic orbit with Conley-Zehnder index at least 3 or $(\tilde u_q)_I$ has a negative asymptotic chord with index at least $\frac{5}{2}$ which in turn imply $\mathrm{Ind}(\tilde w_I;o_\infty)\leq -1$ again.

Suppose by contradiction that the finite energy plane $\tilde u_{k,\ell}$ is not invariant. Let $\underline{\tilde u}_{k,\ell}$ be the underlying somewhere injective finite energy plane of $\tilde u_{k,\ell}$. Then its asymptotic periodic orbit $P$ is nonsymmetric and has Conley-Zehnder index at least 3 due to the dynamically convexity assumption and Corollary \ref{cor:index behaviors}. Arguing as before again, this results in the curve in the next story and hence $\tilde u_q$ has to have either a nonsymmetric negative asymptotic periodic orbit of Conley-Zehnder index at least 3 or a symmetric one with Robbin-Salamon of its half-chord at least $\frac{5}{2}$. We showed that neither happens. This concludes that $\tilde u_{k,\ell}$ has to be invariant.

To show that $\tilde u_{k,\ell}$ is somewhere injective, suppose $\tilde u_{k,\ell}=\underline{\tilde u}_{k,\ell}\circ p$ for some holomorphic branched covering $p:\C\to\C$ and some somewhere injective invariant finite energy plane $\underline{\tilde u}_{k,\ell}$. If we write the positive asymptotic chord of $({\tilde u}_{k,\ell})_I$  by $C$ and that of $(\underline{\tilde u}_{k,\ell})_I$ by $C_0$, $C=(C_0)^{\deg p}$. Since $\mu_\RS(C_0)\geq\frac{3}{2}$ by Proposition \ref{Prop:relative wind and index} (or by Corollary \ref{cor:index behaviors} again) and $\mu_\RS(C)\leq \frac{3}{2}$ as shown above, by Proposition \ref{Prop:RS-index increases} $\deg p=1$ and therefore ${\tilde u}_{k,\ell}$ is somewhere injective with $\mu_\RS(C)=\frac{3}{2}$. The claim that $\mu_\CZ(P)\leq4$ follows from Proposition \ref{prop:index relation} and Proposition \ref{prop:Hormander index}.

\end{proof}
For notational convenience, henceforth, we write
$$
\tilde u:=\tilde u_{k,\ell}=(a,u):\C\pf\R\x M.
$$ 
In what follows, we will show that this somewhere injective finite energy plane is fast, i.e. the asymptotic periodic orbit is nondegenerate and simple, and if $u$ is an immersion and transversal to $X$. The last requirement is equivalent to  $\wind_\pi(\tilde u)=0$.

\begin{Prop}
The invariant finite energy plane $\tilde u=(a,u):\C\to\R\x M$ satisfies,
$$
\wind_\pi(\tilde u)=0,\quad \wind_\infty(\tilde u)=1.
$$
\end{Prop}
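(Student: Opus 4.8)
The plan is to obtain both equalities at one stroke from the open-string refinement of the Hofer--Wysocki--Zehnder winding estimates, exploiting that $\tilde u=(a,u)\colon\C\to\R\x M$ has a single puncture, that this puncture is positive (by the maximum principle, since $\tilde u$ lives in a symplectization), that its asymptotic orbit $P$ is symmetric, and that the half-chord $C$ of $P$ satisfies $\mu_\RS(C)=\frac32$ by Proposition~\ref{prop:existence of invariant finite energy plane}. Here and below I suppress trivialization superscripts, which is legitimate since $c_1(\xi)=0$ and every orbit is contractible, see \eqref{eq:index notation}. Note also that $\pi\circ Tu\not\equiv 0$ was arranged in the construction of the building, so $\wind_\pi(\tilde u)$ is defined and finite, and its associated finite energy half-plane $\tilde u_I$ has $\Gamma_I=\Gamma_\p=\{\infty\}$ with $\infty$ a positive boundary puncture.

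First I would record the \emph{upper} bound. At $\infty$ the curve $\tilde u$ converges to the symmetric orbit $P$ with half-chord $C$, so Proposition~\ref{Prop:relative wind and index} gives
$$
\wind_\infty(\tilde u_I)=\wind_\infty(\tilde u_I;\infty)\le\frac12\Big(\mu_\RS(C)-\frac12\Big)=\frac12\Big(\frac32-\frac12\Big)=\frac12.
$$
For the \emph{lower} bound, Corollary~\ref{cor:identity for winding numbers} with $\#\Gamma=1$ yields
$$
\wind_\infty(\tilde u_I)=\wind_\pi(\tilde u_I)+\frac12(2-\#\Gamma)=\wind_\pi(\tilde u_I)+\frac12,
$$
and $\wind_\pi(\tilde u_I)=\frac12\wind_\pi(\tilde u)\ge 0$ because every zero of $\pi\circ Tu$ has positive degree. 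Comparing the two displays forces $\wind_\infty(\tilde u_I)=\frac12$, hence $\wind_\pi(\tilde u_I)=0$, i.e.\ $\wind_\pi(\tilde u)=0$; and Proposition~\ref{Prop:wind=2 relative wind} then gives $\wind_\infty(\tilde u)=2\wind_\infty(\tilde u_I)=1$.

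I do not anticipate a real obstacle; the point worth stressing is why the purely closed-string estimate of Proposition~\ref{Prop:wind and index} does not suffice on its own. It only gives $\wind_\infty(\tilde u)\le\frac12(\mu_\CZ(P)-p(P))$, which equals $1$ when $\mu_\CZ(P)=3$ but equals $2$ when $\mu_\CZ(P)=4$; since Proposition~\ref{prop:existence of invariant finite energy plane} leaves both cases open, it is precisely the sharper open-string bound coming from $\mu_\RS(C)=\frac32$ that pins down $\wind_\infty(\tilde u)=1$, equivalently $\wind_\pi(\tilde u)=0$, uniformly in both cases. The only remaining verifications are the bookkeeping identifications $\Gamma_I=\Gamma_\p^+=\{\infty\}$ and $\Gamma_\p^-=\Gamma_o=\emptyset$, which make the sums in Proposition~\ref{Prop:relative wind and index} and Corollary~\ref{cor:identity for winding numbers} collapse to the single displayed relations used above.
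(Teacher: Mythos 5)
Your proof is correct and takes essentially the same route as the paper: both derive the upper bound from Proposition~\ref{Prop:relative wind and index} applied to the half-chord with $\mu_\RS(C)=\frac32$, the lower bound from nonnegativity of $\wind_\pi$ together with the winding-number identity for a single puncture, and then combine via Proposition~\ref{Prop:wind=2 relative wind}. The only cosmetic difference is that the paper states the lower bound directly as $\wind_\infty(\tilde u)\ge 1$ from Proposition~\ref{Prop:identity betn wind numbers}, while you phrase it via Corollary~\ref{cor:identity for winding numbers} for $\tilde u_I$; these are interchangeable.
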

\begin{proof}
Since $\wind_\pi(\tilde u)\geq 0$ it follows that $\wind_\infty(\tilde u)\geq1$. From Proposition \ref{Prop:relative wind and index} we have 
$$
2\wind_\infty(\tilde u_I)\leq\mu_{\RS}(C)-\frac{1}{2}=1
$$
where $C$ is the asymptotic chord of $\tilde u_I$. Therefore we conclude with  Proposition \ref{Prop:wind=2 relative wind} that $\wind_\pi(\tilde u)=0$ and $\wind_\infty(\tilde u)=1$.
\end{proof}

\begin{Cor}\label{cor:fast}
The invariant finite energy plane $\tilde u$ is embedded and fast.
\end{Cor}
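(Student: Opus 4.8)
The plan is to verify the two conditions on the asymptotic orbit and the two differential conditions in the definition of a fast finite energy plane, and then to deduce embeddedness. By Proposition~\ref{prop:existence of invariant finite energy plane} the plane $\tilde u=(a,u):\C\to\R\x M$ is somewhere injective and invariant, its asymptotic orbit $P$ has $\mu_\CZ(P)\in\{3,4\}$, and the half-chord $C$ of $P$ has $\mu_\RS(C)=\frac{3}{2}$; by the preceding proposition $\wind_\pi(\tilde u)=0$ and $\wind_\infty(\tilde u)=1$. Since $(M,\alpha)$ is nondegenerate, $P$ is automatically a nondegenerate periodic Reeb orbit, so among the requirements defining ``fast'' only simplicity of $P$ and the immersion/transversality of $u$ remain to be checked.

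For simplicity of $P$ I would argue by contradiction. Write $P=\bar P^{\,m}$ with $\bar P$ a simple periodic orbit and $m\in\N$. If $m\geq 2$, then dynamical convexity gives $\mu_\CZ(\bar P)\geq 3$, and the iteration estimate in Corollary~\ref{cor:index behaviors} yields $\mu_\CZ(P)=\mu_\CZ(\bar P^{\,m})\geq 2m+1\geq 5$ (recall that in the present setting, with $c_1(\xi)=0$ and all closed Reeb orbits contractible, the Conley--Zehnder index is independent of the trivialization and of the filling disk, see~\eqref{eq:index notation}). This contradicts $\mu_\CZ(P)\leq 4$, so $m=1$ and $P$ is simple. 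For the differential conditions, recall that $\wind_\pi(\tilde u)$ is by definition the sum of the local degrees of the zeros of the section $\pi\circ Tu$; since this sum vanishes and each such zero has strictly positive degree by \cite[Proposition~4.1]{HWZ95b}, the section $\pi\circ Tu$ is nowhere zero. Being complex linear out of the one complex dimensional space $T_z\C$, a nonvanishing $\pi\circ Tu(z)$ is an isomorphism onto $\xi_{u(z)}$; hence $Tu(z)$ is injective and $\im Tu(z)+\R X(u(z))=T_{u(z)}M$, i.e.\ $u$ is an immersion transverse to $X$. Together with nondegeneracy and simplicity of $P$, this shows that $\tilde u$ is fast.

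It remains to show $\tilde u$ is embedded. The quickest argument is that $\tilde u=\tilde u_{k,\ell}$ is one of the components of the SFT limit building of the embedded holomorphic spheres $C^{q_0}_N$ of Theorem~\ref{thm:invariant holomorphic sphere}, and embeddedness of the approximating curves is inherited by the (somewhere injective) components of the limit, as in \cite{HWZ03}. Alternatively, and more self-containedly, one may invoke intersection theory for finite energy planes: $\tilde u$ is now known to be somewhere injective, immersed, transverse to $X$, and asymptotic to the simply covered nondegenerate orbit $P$, and its algebraic count of double points equals $\wind_\pi(\tilde u)=0$ (in the style of Hofer--Wysocki--Zehnder, cf.\ also Hryniewicz \cite{Hry12}); since every double point would count positively, $\tilde u$ has none and is therefore an embedding. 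Everything in this corollary is essentially formal once Proposition~\ref{prop:existence of invariant finite energy plane} and the preceding winding-number proposition are in hand; the only point that is not purely routine is this last one, namely making precise that embeddedness survives the neck-stretching limit (equivalently, the double-point bookkeeping at the simply covered asymptotic end), and this is handled by the cited references.
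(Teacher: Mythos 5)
Your proof is correct and follows essentially the same route as the paper: simplicity of $P$ from dynamical convexity combined with the iteration estimate of Corollary~\ref{cor:index behaviors} (contradicting $\mu_\CZ(P)\leq 4$), immersion and transversality from $\wind_\pi(\tilde u)=0$ via positivity of zeros of $\pi\circ Tu$, and embeddedness because $\tilde u$ is a piece of the SFT limit of the embedded spheres $C_N^{q_0}$. The paper additionally notes that $\tilde u$ is an embedding near infinity by Theorem~\ref{thm:asymptotic formula} and simplicity of $P$ before invoking the limit argument, and your alternative via Siefring-style intersection theory is what the paper itself deploys in the subsequent Lemma~\ref{lemma:nicely embedded}, so it is a legitimate variant rather than a gap.
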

\begin{proof}
Due to dynamical convexity, the asymptotic periodic orbit of $\tilde u$ is simple since otherwise $\mu_\CZ(P)\geq5$ for the asymptotic periodic orbit $P$ of $\tilde u$ by Corollary \ref{cor:index behaviors}. This together with the previous proposition shows that $\tilde u$ is fast. Note that $\tilde u$ is an embedding near infinity due to Theorem \ref{thm:asymptotic formula} together with the fact that the asymptotic periodic orbit of $\tilde u$ is simple. Moreover, since it is a piece of the limit of embedded curves, the whole $\tilde u$ is an embedding.
\end{proof}

A standard argument in \cite{HWZ98,Hry12} shows that the existence of a fast finite energy plane into $\R\x M$ gives rise to a global disk-like surface of section for the Reeb vector field $X$ in $M$ under nondegeneracy and dynamical  convexity. Moreover nondegeneracy can be dropped by a limiting argument, see \cite{HWZ98,Hry11}. Hence our invariant fast finite energy plane $\tilde u$ provides an invariant global disk-like surface of section in Theorem \ref{thm:invglobal}. Other assertions in the theorem follow as by-products. For the reader's convenience, we outline this argument.

We say that two finite energy planes $\tilde u,\,\tilde u':\C\to\R\x M$ are equivalent if there exists a biholomorphic transformation $\varphi$ on $\C$ such that $\tilde u=\tilde u'\circ \varphi$. We assume again that $(M,X)$ is nondegenerate.  Let $\MM$ be the moduli space of equivalence classes of finite energy planes in $\R\x M$ equipped with the $C^\infty_\mathrm{loc}$-topology and the topology of uniform convergence near infinity. 
Consider the connected component $\MM(\tilde u)$ of $\MM$ containing $\tilde u$. Denote by $P$ the asymptotic periodic orbit of $\tilde u$ and by $A_P$ the asymptotic operator. For any $c\in(-\infty,0)$ such that $c\notin\sigma(A_P)$, the constrained moduli space $\MM(\tilde u;c)$ is  composed of elements in $\MM(\tilde u)$ with the asymptotic convergence rate $\lambda$ in Theorem \ref{thm:asymptotic formula}, which is a negative eigenvalue of the asymptotic operator $A_P$, smaller than $c$. The virtual dimension of $\MM(\tilde u;c)$ agrees with the constrained Fredholm index 
$$
\Ind(\tilde u;c)=\mu_{\CZ}(P;c)-1.
$$
Here the constrained Conley-Zehnder index is defined by
$$
\mu_{\CZ}(P;c):=2\alpha(S_P;c)+p(S_P;c)
$$
where 
$$
\alpha(S_P;c):=\max\{w(\lambda,S_P)\,|\,\lambda\in\sigma(A_{S_P})\cap(-\infty,c)
$$
and 
$$
p(S_P):=\min\{w(\lambda,S_P)\,|\,\sigma(A_{P})\cap(0,\infty)\}-\alpha(S_P;c),
$$
or equivalently
\bean
\mu_{\CZ}(P;c)&=\mu_{\CZ}(P)-\#\{\sigma(A_P)\cap(c,0))\}\\
&=\max \{2i+j\,|\,\lambda_i^j(S_P)< c\}.
\eea
Note that $\lambda_1^0(S_P)<0$ since there is a finite energy plane in the symplectization asymptotic to $P$, see Proposition \ref{Prop:wind and index}. We choose $\delta<0$ satisfying
\beq\label{eq:delta}
\delta\in(\lambda_1^1(S_P),\lambda_2^0(S_P))\cap(-\infty,0)
\eeq
if the eigenvalue $\lambda_1^1(S_P)<0$ or otherwise
$$
\delta\in(\lambda_1^0(S_P),0)
$$
so that $\MM(\tilde u;\delta)$ consists only of fast finite energy planes in the component $\MM(\tilde u)$.
In particular, if $\mu_{\CZ}(P)$ is either 2 or 3, $\MM(\tilde u)=\MM(\tilde u;\delta)$. From Corollary \ref{cor:fast},
$$
\tilde u\in\MM(\tilde u;\delta).
$$
We denote by 
$$
\#\Gamma_{\mathrm{even}}(\tilde u;\delta):=\frac{1}{2}\big(1+(-1)^{\mu_{\CZ}(P;\delta)}\big) \in\{0,1\}.
$$ 
Observe that
\beq\label{eq:Gammaeven}
\Ind(\tilde u;\delta)+\#\Gamma_{\mathrm{even}}(\tilde u;\delta)=2.
\eeq

\begin{Lemma}\label{lemma:nicely embedded}
For any $\tilde u'=(a',u')\in\MM(\tilde u,\delta)$, $u':\C\to M$ is an embedding.
\end{Lemma}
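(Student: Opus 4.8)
The plan is to run the intersection-theoretic argument of Hofer--Wysocki--Zehnder \cite{HWZ98,HWZ99} and Hryniewicz \cite{Hry12}, testing $\tilde u'$ against its $\R$-translates inside $\R\x M$. First I would record the structure of a general element $\tilde u'=(a',u')\in\MM(\tilde u;\delta)$: by the choice of $\delta$ in \eqref{eq:delta} every such $\tilde u'$ is fast, so its asymptotic orbit $P'$ is nondegenerate and simply covered and $u'$ is an immersion transverse to $X$; hence $\wind_\pi(\tilde u')=0$, and therefore $\wind_\infty(\tilde u')=1$ by Proposition \ref{Prop:identity betn wind numbers}. Moreover $\tilde u'$ is somewhere injective, since a nontrivial polynomial branched cover $\tilde u'=\underline{\tilde u}'\circ p$ with $\deg p>1$ would make the asymptotic orbit of $\tilde u'$ a nontrivial iterate of that of $\underline{\tilde u}'$, contradicting simplicity of $P'$. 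As $u'$ is automatically a local embedding, it remains to show that $u'$ is injective; this will in passing also show that $\tilde u'$ itself is embedded.

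Next I would set up the contradiction. Suppose $u'(z_1)=u'(z_2)$ for some $z_1\neq z_2$, and put $c:=a'(z_2)-a'(z_1)$. If $c\neq0$, consider the $\R$-translate $\tilde u'_c:=(a'+c,u'):\C\to\R\x M$; by $\R$-invariance of $\wt J$ it is again a somewhere injective $\wt J$-holomorphic finite energy plane, asymptotic to an $\R$-translate of $P'$, and its image differs from that of $\tilde u'$ because $a'$ attains a minimum (the puncture is positive, so $a'\to+\infty$ there) and hence $\tilde u'(\C)$ is not translation invariant. Since $\tilde u'(z_2)=\tilde u'_c(z_1)$, the curves $\tilde u'$ and $\tilde u'_c$ share a geometric intersection point; if instead $c=0$, then $\tilde u'(z_1)=\tilde u'(z_2)$ is a genuine self-intersection of $\tilde u'$. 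In either case I want to reach a contradiction from positivity of intersections in the four-manifold $\R\x M$.

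The analytic input is the behaviour near the puncture. Because $P'$ is nondegenerate and simply covered and $\tilde u'$ wraps around it once with $\wind_\infty(\tilde u')=1$, the asymptotic formula of \cite{HWZ96} (Theorem \ref{thm:asymptotic formula}) shows that a neighbourhood of $\infty$ is mapped to an embedded half-cylinder along which $\tilde u'$ has no self-intersections and no two distinct $\R$-translates of $\tilde u'$ meet: agreement of the $M$-components forces the cylindrical coordinates to coincide up to exponentially small error, and then the differing $\R$-coordinates obstruct any coincidence. Hence $\tilde u'\cdot\tilde u'_c$ (resp.\ the self-intersection number of $\tilde u'$) is a finite, well-defined integer, bounded below by the number of geometric intersection points --- so $\geq1$ in either case above --- by positivity of intersections. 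On the other hand this integer vanishes: letting $c\to+\infty$, $\tilde u'$ and $\tilde u'_c$ become disjoint over every compact set while the asymptotic contribution at $\infty$ stays zero, so $\tilde u'\cdot\tilde u'_c=0$ for all $c\neq0$; and the self-intersection number of the somewhere injective plane $\tilde u'$ with $\wind_\pi(\tilde u')=0$ and simply covered asymptotic orbit vanishes by the adjunction inequality for punctured pseudoholomorphic curves \cite{HWZ99,Hry12}. This contradiction shows that $u'$ is injective, and together with the local embedding property $u'$ is an embedding.

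The step I expect to be the main obstacle is the third paragraph: carrying out the asymptotic analysis carefully enough to know that $\tilde u'$ and its $\R$-translates genuinely have no intersections near the puncture, and computing that the relevant (self-)intersection numbers are zero. This is exactly where the three properties $\wind_\pi(\tilde u')=0$, $\wind_\infty(\tilde u')=1$ and simplicity of $P'$ are used; everything else is the soft positivity-of-intersections bookkeeping, and the whole argument is the standard one isolated in \cite{HWZ98,HWZ99,Hry12}.
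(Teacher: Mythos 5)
Your proposal runs the classical Hofer--Wysocki--Zehnder style argument (positivity of intersections of $\R$-translates plus asymptotic analysis plus adjunction), which is in fact the approach the paper explicitly acknowledges as viable: the paper remarks that the assertion ``can be proved by following \cite[Theorem 2.7]{HWZ03} closely using the fastness property'' before opting for Siefring's intersection theory instead. The paper's actual proof computes the constrained generalized intersection number $[\tilde u;\delta]*[\tilde u;\delta]$ using Siefring's adjunction formula for the single $\tilde u$ already known to be embedded (so $\mathrm{sing}(\tilde u)=0$, $\bar\sigma(P)=1$, and $\Ind(\tilde u;\delta)+\#\Gamma_{\mathrm{even}}(\tilde u;\delta)=2$ gives the vanishing), then transfers to arbitrary $\tilde u'$ and its $\R$-translate by homotopy invariance of $*$, and finishes with positivity of intersections.

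The crucial difference is that the step you yourself flag as ``the main obstacle'' --- carrying out the asymptotic analysis carefully enough to know that $\tilde u'$ and $\tilde u'_c$ have no intersections near the puncture and that the asymptotic contribution to the intersection number is zero --- is precisely what Siefring's generalized intersection number is built to handle. The homotopy-invariant pairing $*$ already includes the asymptotic winding contributions, so the paper never has to deform $c\to+\infty$ or argue by hand about coincidences of translates near infinity; those contributions are simply part of the definition of $*$ and are computed once via the adjunction formula. Your ``letting $c\to+\infty$'' argument does work (it is essentially how HWZ originally proved such statements), but making it rigorous requires a fairly delicate version of the asymptotic formula of \cite{HWZ96} together with a careful accounting of the asymptotic winding comparison, and you should not expect it to be shorter than just invoking Siefring. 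In summary: your approach is correct in substance and matches the HWZ-style alternative the paper mentions, but the paper's route via Siefring absorbs the hardest part of your proof into a single cited formula.
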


This can be proved by following \cite[Theorem 2.7]{HWZ03} closely using the fastness property. However by now this directly follows from Siefring's work \cite{Sie11}.

\begin{proof}
Although the assertion immediately follows from \cite[Theorem 2.6]{Sie11}, we explain how
the assertion follows from the adjunction formula by computing the (constrained) generalized intersection number $[\tilde u;\delta]*[\tilde u;\delta]$. According to \cite[Theorem 2.3, Corollary 4.7]{Sie11} (see also \cite{Wen10}), 
$$
[\tilde u;\delta]*[\tilde u;\delta]=2\,\mathrm{sing}(\tilde u)+\frac{1}{2}(\Ind(\tilde u;\delta)-2+\#\Gamma_{\mathrm{even}}(\tilde u;\delta))-1+\bar\sigma(P).
$$
Here $\mathrm{sing}(\tilde u)$ is the singularity index, and $\bar\sigma(P)$ the spectral covering number. The facts that $\tilde u$ is an embedding and the asymptotic periodic orbit $P$ is simple imply $\mathrm{sing}(\tilde u;c)=0$ and $\bar\sigma(P)=1$. Therefore we deduce from (\ref{eq:Gammaeven}) that $[\tilde u;\delta]*[\tilde u;\delta]=0$.  Due to homotopy invariance of the intersection number, 
$$
[\tilde u';\delta]*[\tilde u_r';\delta]=[\tilde u;\delta]*[\tilde u;\delta]=0
$$ 
where $\tilde u_r'(z)=(a'(z)+r,u'(z))$ for any $r\in\R$ and thus $u':\C\to S^3$ is injective for every $\tilde u'=(a',u')\in\MM(\tilde u,\delta)$. Since $\tilde u'$ is fast, $u'$ is an immersion and hence an embedding. 
\end{proof}

Suppose that a nondegenerate starshaped hypersurface $(M,\alpha)$ is dynamically convex. In fact, Proposition \ref {prop:automatic transversality} holds in this constrained case and thus $\MM(\tilde u;\delta)$ is a smooth manifold of dimension 2. Since the constrained moduli space $\MM(\tilde u;\delta)$ consists of equivalence classes of fast finite energy planes, the quotient space $\MM(\tilde u;\delta)/\R$ dividing out  the translation in the $\R$-direction of the symplectization $\R\x M$ is compact due to Hryniewicz \cite{Hry11} and hence diffeomorphic to $S^1$. Such a  $S^1$-family of planes are embedded due to  Lemma \ref{lemma:nicely embedded} in $M$ and form a holomorphic open book decomposition due to Fredholm theory. Here we mean by holomorphic that it is liftable to an $S^1$-family of finite energy planes to the symplectization.  As shown by Hofer, Wysocki, and Zehnder in \cite[Proposition 5.1]{HWZ98}, every page of the open book decomposition is a global disk-like surface of section in particular our invariant fast finite energy plane $\tilde u=(a,u):\C\to\R\x M$ gives rise to an invariant global disk-like surface of section $\overline{u(\C)}\subset M$ for $X$. A limiting argument in \cite{HWZ98,Hry11} enables us to remove the nondegeneracy assumption. This completes the proof of Theorem \ref{thm:invglobal}.

\begin{Rmk}\label{rmk:symmetric open book decomposition}
({\em Symmetric open book decomposition}). If we do not require holomorphicity of an open book decomposition, we are easily able to construct an invariant open book decomposition out of an invariant global surface of section. Let $M$ be a manifold of dimension 3 with a vector field $X$ carrying an involution $\rho$ such that $\rho_*X=-X$. Suppose that there exists  a  global surface of section $\Sigma$ in $M$ invariant under $\rho$. 
Let $\tau_x$ be the first return time of $x\in\mathring\Sigma$, i.e.
$$
\tau_x:=\min\{t>0\,|\,\phi_X^{t}(x)\in\mathring\Sigma\}.
$$
Note that by $\phi_X^t\circ \rho=\rho\circ \phi_X^{-t}$, 
$$
\tau_x=\tau_{\rho\circ\phi_X^{\tau_x}(x)}.
$$
We define a diffeomorphism $\Phi:S^1\x\Sigma\to M\setminus\p\Sigma$ as 
$$
\Phi(\theta,z):=\phi_X^{\theta\tau_{\iota(z)}}\circ\iota(z)
$$
where $\iota:\Sigma\to M\setminus \p\Sigma$ is the embedding. Then every page $\Phi(\theta,\Sigma)\subset M\setminus\p\Sigma$ is a global surface of section. By construction, for a given $y\in\Phi(\theta,\Sigma)$ there exists $x\in\mathring\Sigma$ such that $y=\phi_X^{\theta\tau_x}(x)$. Then we have
\bean
\rho(y)&=\rho\circ\phi_X^{\theta\tau_x}(x)\\
&=\phi_X^{-\theta\tau_x}\circ \rho(x)\\
&=\phi_X^{(1-\theta)\tau_x}\circ\phi_X^{-\tau_x}\circ \rho(x)\\
&=\phi_X^{(1-\theta)\tau_x}\circ \rho\circ \phi_X^{\tau_x}(x).
\eea
Since $\rho\circ\phi_X^{\tau_x}(x)\in\mathring\Sigma$ and $\tau_x=\tau_{\rho\circ\phi_X^{\tau x}(x)}$, we conclude that 
$$
\rho\circ \Phi(\theta,\Sigma)=\Phi(1-\theta,\Sigma).
$$ 
In particular, the global surfaces of section $\Phi(0,\Sigma)$ and $\Phi(\frac{1}{2},\Sigma)$ are invariant under $\rho$.
\end{Rmk}



\begin{thebibliography}{99}


\bibitem[Abb04]{Abb04}  C. Abbas, {\em Pseudoholomorphic strips in symplectisations. I. Asymptotic behavior}, Ann. Inst. H. Poincaré Anal. Non Lin\'eaire \textbf{21} (2004), no. 2, 139–-185.

\bibitem[AFFHvK12]{AFFHvK12}  P. Albers, J. Fish, U. Frauenfelder, H. Hofer, O. van Koert, {\em Global surfaces of
section in the planar restricted 3-body problem}, Archive for Rational Mechanics
and Analysis \textbf{204} (2012), 273--284.

\bibitem[AFFvK13]{AFFvK13} P. Albers, J. Fish, U. Frauenfelder, O. van Koert, {\em The Conley-Zehnder indices of the rotating Kepler problem},  Mathematical Proc. of the Cambridge Philosophical Society, (2013), 243--260.

 \bibitem[AFPvK12]{AFPvK12}P.\,Albers,
 U.\,Frauenfelder, O.\,van Koert, G.\,Paternain, \emph{The contact
 geometry of the restricted 3-body problem}, Comm.\,Pure
 Appl.\,Math. \textbf{65}, (2012), no.\,2, 229--263.

\bibitem[BCGG10]{BCGG10} E. Bujalance, F.J. Cirre, J.M. Gamboa, G. Gromadzki, ``Symmetries of Compact Riemann Surfaces'',  Lecture Notes in Mathematics, Vol. 2007 (2010).

\bibitem[BEHWZ03]{BEHWZ03} F. Bourgeois, Y. Eliashberg, H. Hofer, K. Wysocki, E. Zehnder, {\em Compactness results in Symplectic Field Theory}, Goem. Top. \textbf{7} (2003), 799--888.

 \bibitem[Bir15]{Bir15} G. Birkhoff, \emph{The restricted problem of
 three bodies}, Rend.\,Circ.\,Matem.\,Palermo \textbf{39} (1915),
 265--334.

\bibitem[Bou02]{Bou02} F. Bourgeois, {\em  A Morse-Bott approach to contact homology}, PhD thesis, Stanford University, (2002).
\bibitem[Bou06]{Bou06} F. Bourgeois, {\em Contact homology and homotopy groups of the space of contact structures},  Math. Res. Lett. \textbf{13} (2006), no. 1, 71--85.

\bibitem[Con63]{Con63} C.C. Conley, {\em On some new long periodic solutions of the plane restricted three body problem}, Commun. Pure Appl. Math. \textbf{16}, (1963), 449--467,


\bibitem[CM05]{CM05} K. Cieliebak, K. Mohnke, {\em Compactness for punctured holomorphic curves}, J. Symplectic Geom. \textbf{3}(4) (2005), 589--654.

\bibitem[CZ84]{CZ84} C. Conley, E.  Zehnder, {\em Morse-type index theory for flows and periodic solutions of
Hamiltonian equations,} Comm. Pure Appl. Math. \textbf{37} (1984),  207--253.

\bibitem[Dra04]{Dra04} D. Dragnev, {\em Fredholm Theory and Transversality
for Noncompact Pseudoholomorphic Maps in Symplectizations} Commun. Pure Appl. Math.
\textbf{57}(6) (2004), 726-–763.

\bibitem[Gro85]{Gro85} M. Gromov, {\em Pseudo-holomorphic curves on almost complex manifolds}, Invent. Math. \textbf{82} (1985), 307--347.

\bibitem[Hof93]{Hof93} H. Hofer \emph{Pseudoholomorphic curves in symplectizations with applications to the Weinstein conjecture in dimension three}, Invent. Math. \textbf{114}(3) (1993), 515--563

\bibitem[Hry14]{Hry11} U.\,Hryniewicz, \emph{Systems of global surfaces of section for dynamically convex Reeb flow on the 3-sphere},  J. Sympl. Geom. \textbf{12} (2014), 791--862.

\bibitem[Hry12]{Hry12} U.\,Hryniewicz, \emph{Fast finite-energy planes in symplectizations and applications},
Trans. Amer. Math. Soc. \textbf{364} (2012), 1859--1931.


\bibitem[HLS15]{HLS13} U.\,Hryniewicz, J. Licata, P. Salom\~ao,  \emph{A dynamical characterization of universally tight lens spaces}, Proceedings of the London Mathematical Society, \textbf{110} (2015), 213--269.
 

\bibitem[HS11]{HS11} U.\,Hryniewicz, P. Salom\~ao, \emph{On the existence of disk-like global sections for Reeb flows on the tight 3-sphere}, Duke Math. J. \textbf{160} (2011), no. 3, 415--465.

\bibitem[HLS98]{HLS98} H. Hofer, V. Lizan, J-C. Sikorav, {\em On genericity for holomorphic curves in four-dimensional almost-complex manifolds}, The Journal of Geometric Analysis, \textbf{7}(1) (1998), 149--159.

 \bibitem[HWZ95a]{HWZ95a}H.\,Hofer, K.\,Wysocki, E.\,Zehnder, \emph{A characterisation of the tight three-sphere}, Duke Math. J. \textbf{81} (1995), 159--226.

 \bibitem[HWZ95b]{HWZ95b}H.\,Hofer, K.\,Wysocki, E.\,Zehnder, \emph{Properties of pseudo-holomorphic curves in symplectizations.\,II.\,Embedding controls and algebraic invariants}, Geom.\,Funct.\,Anal. \textbf{5} (1995), no.\,2, 270--328.

 \bibitem[HWZ96a]{HWZ96}H.\,Hofer, K.\,Wysocki, E.\,Zehnder, \emph{Properties of pseudo-holomorphic curves in symplectizations.\,I.\,Asymptotics}, Ann.\,Inst.\,H.\,Poincar\'e Anal.\,Non Lin\'eaire \textbf{13} (1996), no.\,3, 337--379.



 \bibitem[HWZ96b]{HWZ96b}H.\,Hofer, K.\,Wysocki, E.\,Zehnder, \emph{Properties of pseudo-holomorphic curves in symplectizations.\,IV.\,Asymptotics with degeneracies}, Contact and symplectic geometry (Cambridge, 1994), Publ,\,Newton Inst., \textbf{8}, Cambridge Univ.\,Press, Cambridge (1996), 78--117.

\bibitem[HWZ98]{HWZ98} H. Hofer, K. Wysocki, E. Zehnder, {\em The dynamics on a strictly convex energy surface in $\R^4$}, Ann. Math., \textbf{148} (1998) 197--289.



 \bibitem[HWZ99]{HWZ99}H.\,Hofer, K.\,Wysocki, E.\,Zehnder, \emph{Properties of pseudo-holomorphic curves in symplectizations.\,III.\,Fredholm theory}, Topics in nonlinear analysis, Progr.\,Nonlinear Differential Equations Appl., \textbf{35}, Birkh\"auser, Basel (1999), 381--475.

\bibitem[HWZ03]{HWZ03} H.\,Hofer, K.\,Wysocki, E.\,Zehnder, {\em Finite energy foliations of tight three-spheres and Hamiltonian dynamics}, Ann. Math. \textbf{157} (2003), 125--257

\bibitem[Kat76]{Kat76} T. Kato, ``Perturbation Theory for Linear Operators",  Grundlehren der mathematischen Wissenschaften \textbf{132} (1976).

\bibitem[Kan14]{Kan14} J. Kang, {\em Reversible maps and symmetric periodic points}, (2014)  arXiv:1410.3997.

\bibitem[Lon02]{Lon02} Y. Long, ``Index Theory for Symplectic Paths with Applications'', Birkh\"auser, (2002).

\bibitem[LZZ06]{LZZ06} Y. Long, D. Zhang, C. Zhu, {\em Multiple brake orbits in bounded symmetric domains}, Advanced in Math. \textbf{203} (2006) 568--635.

\bibitem[Mar71]{Mar71} J. Martinet, {\em Formes de contact sur les vari\'et\'es de dimension 3}, Proceedings of Liverpool Singularities Symposium II, Springer Lecture Notes in Mathematics \textbf{209}, (1971), 142--163.


\bibitem[McD91]{McD91} D. McDuff, {\em The local behaviour of holomorphic curves in almost complex 4-manifolds}, J. Diff. Geom. \textbf{34} (1991) 143--164.


\bibitem[MS98]{MS98} D. McDuff, D. Salamon,  ``Introduction to Symplectic Topology'', Oxford University Press, 1998

\bibitem[MS04]{MS04} D. McDuff, D. Salamon, ``$J$-Holomorphic Curves and Symplectic Topology'',
AMS Colloquium Publications, Vol. 52, 2004. 

\bibitem[Poi92]{Poi} H. Poincar\'e, \emph{Les m\'ethodes nouvelles de la m\'ecanique c\'eleste},
(1892, 1893, 1899), Gauthier-Villars, Paris; English translation with commentaries by D. Goroff, 
\emph{New Methods of Celestial Mechanics}, American Institute of Physics (1992). 

\bibitem[RS93]{RS93} J. Robbin, D. Salamon, {\em The Maslov index for paths},  Topology \textbf{32}, (1993), 827--844.

 Bulletin of the LMS \textbf{27} (1995), 1--33. 


\bibitem[Sie11]{Sie11} R. Siefring, {\em Intersection theory of punctured pseudoholomorphic curves}, 
Geometry \& Topology, \textbf{15}(4), (2011), 2351--2457. 

\bibitem[SZ92]{SZ92} D. Salamon, E. Zehnder, {\em Morse theory for periodic solutions of Hamiltonian systems and the Maslov index}, Comm. Pure Appl. Math. \textbf{45} (1992), 1303--1360.

\bibitem[Wel05]{Wel05} J.-Y. Welschinger, {\em Invariants of real symplectic 4-manifolds and lower bounds in real enumerative geometry}, Invent. Math. \textbf{162}, (2005), 195--234.

\bibitem[Wen10]{Wen10} C. Wendl, {\em Automatic transversality and orbifolds of punctured holomorphic curves in dimension four}, Comment. Math. Helv. \textbf{85} (2010), 347--407

\bibitem[Wen14]{Wen14} C. Wendl, Wendl's blog: https://symplecticfieldtheorist.wordpress.com, Post: Generic transversality in symplectizations.

\end{thebibliography}
\end{document}